\documentclass[aap]{imsart}

\RequirePackage{amsthm,amsmath,amsfonts,amssymb}
\RequirePackage[numbers, sort]{natbib}
\RequirePackage[colorlinks,citecolor=blue,urlcolor=blue]{hyperref}%% uncomment this for coloring bibliography citations and linked URLs
\RequirePackage{graphicx}%% uncomment this for including figures
\usepackage{subfigure, hyperref}
\usepackage{comment}
\usepackage{xcolor}
\usepackage{dsfont}
\usepackage{bm}

\startlocaldefs
\numberwithin{equation}{section}
\theoremstyle{plain}
\newtheorem{theorem}{Theorem}[section]  
\newtheorem{corollary}[theorem]{Corollary}
\newtheorem{lemma}[theorem]{Lemma} 
\newtheorem{proposition}[theorem]{Proposition}

\theoremstyle{definition}
\newtheorem{remark}[theorem]{Remark} 
\newtheorem{example}[theorem]{Example}
\newtheorem{assumption}[theorem]{Assumption}
\newcommand{\rhs}{right-hand side}

\newcommand{\as}{{\rm a.s.}}
\newcommand{\cip}{\stackrel{\P}{\rightarrow}}
\newcommand{\cid}{\stackrel{\rm d}{\rightarrow}}
\newcommand{\cas}{\stackrel{\rm a.s.}{\rightarrow}}
\newcommand{\cw}{\stackrel{\rm w}{\rightarrow}}

\newcommand{\eid}{\stackrel{\rm d}{=}}

\newcommand{\x}{\mathbf{x}}

\newcommand{\y}{\mathbf{y}}
\newcommand{\X}{\mathbf{X}}

\newcommand{\R}{\mathbb{R}}  

\newcommand{\C}{\mathbb{C}}  
\newcommand{\N}{\mathbb{N}}

\newcommand{\B}{\mathbf{B}}
\newcommand{\A}{\mathbf{A}}
\newcommand{\I}{\mathbf{I}}
\newcommand{\Y}{{\mathbf Y}}
\newcommand{\bfT}{{\mathbf T}}

\renewcommand{\S}{\mathbf{S}}
\newcommand{\E}{\mathbb{E}}
\renewcommand{\P}{\mathbb{P}}

\newcommand\mf{\mathbf}
\newcommand\ml{\mathcal}
\newcommand\tp{\top}
\newcommand\wt{\widetilde}

\newcommand{\rw}{\rightarrow}
\newcommand{\twonorm}[1]{\|#1\|_2}
\newcommand{\sign}{\operatorname{sign}}
\newcommand{\Levy}{L\'{e}vy }

\renewcommand{\l}{\left}
\renewcommand{\r}{\right}

\newcommand{\be}{\begin{equation}}
	\newcommand{\ee}{\end{equation}}
\newcommand{\bea}{\begin{equation}\begin{aligned}}
		\newcommand{\eea}{\end{aligned}\end{equation}}
\newcommand{\tr}{\operatorname{tr}}

\newcommand{\Var}{\operatorname{Var}}

\renewcommand{\Im}{\operatorname{Im}}
\renewcommand{\Re}{\operatorname{Re}}
\newcommand{\vep}{\varepsilon}

\newcommand{\bfv}{{\bf v}}

\newcommand{\bfR}{{\bf R}}

\newcommand{\nto}{n\to\infty}
\newcommand{\e}{\mathbf{e}}
\newcommand{\MP}{Mar\v cenko--Pastur }

\newcommand{\bfx}{{\bf x}}

\newcommand{\bfZ}{{\bf Z}}
\newcommand{\bfz}{{\bf z}}
\newcommand{\bfS}{{\bf S}}

\newcommand{\bfI}{{\bf I}}

\newcommand{\norm}[1]{\|#1\|}
\newcommand{\diag}{{\rm diag}}

\newcommand{\1}{\mathds{1}}

\renewcommand{\geq}{\geqslant}
\renewcommand{\leq}{\leqslant}
\renewcommand{\ge}{\geqslant}
\renewcommand{\le}{\leqslant}

\newcommand{\zd}[1]{{#1}}
\newcommand{\jh}[1]{{#1}}

\endlocaldefs

\begin{document}

\begin{frontmatter}
%%%%%%%%%%%%%%%%%%%%%%%%%%%%%%%%%%%%%%%%%%%%%%
%%                                          %%
%% Enter the title of your article here     %%
%%                                          %%
%%%%%%%%%%%%%%%%%%%%%%%%%%%%%%%%%%%%%%%%%%%%%%
\title{Quadratic form of heavy-tailed self-normalized random vector with applications in $\alpha$-heavy \MP law}
\runtitle{Quadratic form of heavy-tailed self-normalized random vector}

\begin{aug}
%%%%%%%%%%%%%%%%%%%%%%%%%%%%%%%%%%%%%%%%%%%%%%%
%% Only one address is permitted per author. %%
%% Only division, organization and e-mail is %%
%% included in the address.                  %%
%% Additional information such as            %%
%% identifying the corresponding author must %%
%% be included in in the Acknowledgments     %%
%% section if necessary.                     %%
%% ORCID can be inserted by command:         %%
%% \orcid{0000-0000-0000-0000}               %%
%%%%%%%%%%%%%%%%%%%%%%%%%%%%%%%%%%%%%%%%%%%%%%%
\author[A]{\fnms{Zhaorui} \snm{Dong}
	\ead[label=e1,mark]{zhaoruidong@link.cuhk.edu.cn} }
\author[B]{\fnms{Johannes} \snm{Heiny}
	\ead[label=e2,mark]{heiny@kth.se} }
\author[A]{\fnms{Jianfeng} \snm{Yao}
	\ead[label=e3,mark]{jeffyao@cuhk.edu.cn} }
%%%%%%%%%%%%%%%%%%%%%%%%%%%%%%%%%%%%%%%%%%%%%%
%% Addresses                                %%
%%%%%%%%%%%%%%%%%%%%%%%%%%%%%%%%%%%%%%%%%%%%%%
\address[A]{The Chinese University of Hong Kong, Shenzhen,  \printead{e1,e3}}
\address[B]{KTH Royal Institute of Technology, \printead{e2}}
\end{aug}

\begin{abstract}
	Let $\mathbf{x}$ be a random vector with $n$ i.i.d.\ real-valued components in the domain attraction of an $\alpha$-stable law with  $\alpha\in(0,2)$, and let $\mathbf{y}=\mathbf{x}/\|\mathbf{x}\|_2$ be the associated self-normalized vector on the unit sphere. For a (possibly random) Hermitian matrix $\mathbf{A}_n=\big(a_{ij}^{(n)}\big)$ independent of $\mathbf{y}$, we study the asymptotic law of the quadratic form $\mathbf{y}^\top \mathbf{A}_n \mathbf{y}$. Building on the sharp separation between diagonal and off-diagonal contributions in this heavy-tailed setting, we show that under a mild assumption on the Frobenius  norm of the off-diagonal part of $\mathbf{A}_n$ the limiting law is solely governed by the empirical distribution of the diagonal entries and the index $\alpha$. More precisely, if $n^{-1}\sum_{i=1}^n \delta_{a^{(n)}_{ii}}$ converges weakly almost surely to a deterministic $\nu$, then $Q_n$ converges in distribution to a non-degenerate law $\mu_{\nu,\alpha}$  characterized through its Stieltjes transform. The law $\mu_{\nu,\alpha}$ is shown to be atom-free (provided that $\nu$ is non-degenerate) with an explicit density and tractable tail behavior.
	
	As an application in random matrix theory, we derive an implicit resolvent-based representation of the $\alpha$-heavy Mar\v{c}enko--Pastur law $H_{\alpha,\gamma}$ for heavy-tailed sample correlation matrices and prove that $H_{\alpha,\gamma}$ has no atoms except possibly at the origin.
	For comparison with the light-tailed setting, we also provide a Hanson--Wright-type concentration inequality for $\mathbf{y}^\top \mathbf{A}_n \mathbf{y}$ when the components of $\mathbf{x}$ are sub-Gaussian.
\end{abstract}

\begin{keyword}[class=MSC]
\kwd[Primary ]{60B20}
%\kwd{???}
\kwd[; secondary ]{60F05, 60F10, 60G10, 60G55, 60G70}
\end{keyword}

\begin{keyword}
\kwd{Heavy tails}
\kwd{random quadratic forms}
\kwd{large correlation matrix}
\kwd{$\alpha$-heavy Mar\v{c}enko-Pastur law}
\end{keyword}

\end{frontmatter}
%%%%%%%%%%%%%%%%%%%%%%%%%%%%%%%%%%%%%%%%%%%%%%
%% Please use \tableofcontents for articles %%
%% with 50 pages and more                   %%
%%%%%%%%%%%%%%%%%%%%%%%%%%%%%%%%%%%%%%%%%%%%%%
%\tableofcontents

%%%%%%%%%%%%%%%%%%%%%%%%%%%%%%%%%%%%%%%%%%%%%%
%%%% Main text entry area:

\section{Introduction}

Consider an $n$-dimensional population vector  
\[
\bfx = (X_1, \ldots, X_n)^{\top} \in \R^n,
\]
where the components $X_i$ are independent and identically distributed (iid) copies of some non-degenerate random variable $\xi$ with $\E[\xi]=0$. For a symmetric matrix $\A\in \R^{n\times n}$, the quadratic form of $\x$ is defined as $Q_n(\x,\A)=\x^\top \A \x$.  Such quadratic forms of random vectors are  key objects in high-dimensional probability \cite{vershynin2009high}. When $\xi$ is sub-Gaussian, the sharp concentration of $Q_n(\x,\A)$ around its mean $\E[Q_n(\x,\A)]$ is given by the classical Hanson--Wright inequality \cite{hanson:wright:1971bound}.

\begin{theorem}[Hanson--Wright inequality for $Q_n(\x,\A)$, \cite{ hanson:wright:1971bound}]\label{thm:classical-HW}
	If $\xi$ is centered and sub-Gaussian, then there exist universal constants $C,c>0$ such that for any $n\in \mathbb{N}$ and $u>0$, 
	\begin{equation*}
		\P\!\left(\big|Q_n(\x,\A)-\E[Q_n(\x,\A)]\big|\ge u\right)\le
		C\exp\!\left[-c\,\min\!\left(\frac{u^2}{K^4\|\A\|_{\operatorname{F}}^2},\ \frac{u}{K^2\|\A\|}\right)\right],
	\end{equation*}
	where $K=\|\xi\|_{\psi_2}$.
\end{theorem}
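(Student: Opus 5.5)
We prove the inequality by splitting $Q_n(\x,\A)-\E[Q_n(\x,\A)]$ into a diagonal and an off-diagonal part:
\[
Q_n-\E Q_n=\sum_{i} a_{ii}\big(X_i^2-\E X_i^2\big)+\sum_{i\neq j} a_{ij}X_iX_j=:D+O.
\]
It then suffices to show that each part satisfies the stated tail bound, with $u$ replaced by $u/2$ and adjusted constants. For the diagonal part, the variables $X_i^2-\E X_i^2$ are independent, centered and sub-exponential with $\|X_i^2-\E X_i^2\|_{\psi_1}\lesssim K^2$. Bernstein's inequality for sub-exponential sums gives
\[
\P(|D|\ge u/2)\le 2\exp\!\Big[-c\min\Big(\tfrac{u^2}{K^4\sum_i a_{ii}^2},\tfrac{u}{K^2\max_i|a_{ii}|}\Big)\Big].
\]
This has the required form, since $\sum_i a_{ii}^2\le\|\A\|_{\operatorname F}^2$ and $\max_i|a_{ii}|\le\|\A\|$.

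The off-diagonal part is the main obstacle, and we control it through its moment generating function.

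\textbf{Step 1 (decoupling).} Let $\x'$ be an independent copy of $\x$. For convex $F$, the standard decoupling inequality gives $\E F(O)\le \E F\big(4\sum_{i\neq j}a_{ij}X_iX'_j\big)$. We prove it by introducing iid Bernoulli$(1/2)$ selectors $\delta_i$ and using Jensen's inequality conditionally, applied to $\sum_{i\neq j}\delta_i(1-\delta_j)a_{ij}X_iX_j$.

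\textbf{Step 2 (Gaussian comparison).} Conditionally on $\x'$, the variable $\sum_i X_i\big(\sum_j a_{ij}X'_j\big)$ is sub-Gaussian with parameter $K\|\A\x'\|_2$, where the diagonal of $\A$ has been set to zero. Hence
\[
\E\exp(\lambda O)\le \E\exp\!\big(C\lambda^2K^2\|\A\x'\|_2^2\big).
\]
Next let $\mathbf g,\mathbf g'$ be independent standard Gaussian vectors. The identity $\E_{\mathbf g}\exp(\mu\langle\mathbf g,\A\x'\rangle)=\exp(\mu^2\|\A\x'\|^2/2)$ lets us rewrite the right-hand side as an expectation in which $\x'$ enters linearly. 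Applying the sub-Gaussian bound once more, this time in $\x'$, replaces $\x'$ by $K\mathbf g'$. We conclude $\E\exp(\lambda O)\le\E\exp(C\lambda K^2\,\mathbf g^\top\A\mathbf g')$.

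\textbf{Step 3 (Gaussian chaos).} By rotation invariance, $\mathbf g^\top\A\mathbf g'\eid\sum_k s_k g_kg'_k$, where $s_k$ are the singular values of $\A$. A direct computation gives $\E\exp(t g_kg'_k)=(1-t^2)^{-1/2}\le e^{t^2}$ for $|t|\le1/2$. Therefore
\[
\E\exp(\lambda O)\le\exp\!\big(C\lambda^2K^4\|\A\|_{\operatorname F}^2\big)\qquad\text{for } |\lambda|\le c/(K^2\|\A\|).
\]

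Finally, Markov's inequality with the optimal choice $\lambda=\min\big(u/(CK^4\|\A\|_{\operatorname F}^2),\,c/(K^2\|\A\|)\big)$ yields the mixed sub-Gaussian/sub-exponential tail for $O$. Combining this with the bound for $D$ through a union bound completes the proof.
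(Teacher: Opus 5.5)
The paper gives no proof of this statement. It is quoted from the literature and used only as a black box in the proof of Theorem~\ref{thm:HW-Qy}.

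Your argument is correct. It is essentially the modern Rudelson--Vershynin proof: Bernstein's inequality for the diagonal part, then decoupling, Gaussian comparison and the explicit moment generating function of a bilinear Gaussian chaos for the off-diagonal part. Relative to the paper, this buys a self-contained justification of the displayed form. It is also the route that actually yields the operator norm $\|\A\|$ and allows non-symmetric $\xi$. The original 1971 argument was for symmetric variables and gave a weaker version of the linear term in the exponent.

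One bookkeeping point should be made explicit. Once you zero out the diagonal, Steps~2 and~3 bound everything in terms of $\widetilde{\A}=\A-\diag(\A)$, not $\A$. To state the result with the norms of $\A$, you need $\|\widetilde{\A}\|_{\operatorname{F}}\le\|\A\|_{\operatorname{F}}$ and $\|\widetilde{\A}\|\le\|\A\|+\max_i|a_{ii}|\le 2\|\A\|$; the latter is what lets you write the admissible range as $|\lambda|\le c/(K^2\|\A\|)$. Alternatively, observe that your decoupling inequality holds with the full double sum $\sum_{i,j}a_{ij}X_iX_j'$ on the right. This works because the diagonal terms $a_{ii}X_iX_i'$ also have zero conditional mean in the Jensen step, so you can then work with $\A$ itself throughout.
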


Subsequent work has relaxed the tail conditions of $\xi$ or allowed dependence among the entries, see \cite{adamczak2015note, adamczak2012tail, dai2025note, louart2025} and the references therein. Among various dependence structures, the case that the random vector is distributed on the unit sphere $\ml{S}^{n-1}$ is of typical interest. A general way to produce a distribution on $\ml{S}^{n-1}$ is to consider the $L^2$ self-normalization of $\x$:
\begin{equation}\label{eq:L2-selfnormalization-y}
	\mf{y}=\l(\frac{X_1}{\sqrt{X_1^2+\cdots+X_n^2}},\cdots,\frac{X_n}{\sqrt{X_1^2+\cdots+X_n^2}}\r)^\top\in \ml{S}^{n-1}.
\end{equation}
In this work, we investigate the corresponding quadratic forms  $Q_n(\y,\A)=\y^\top \A\y$. A deterministic relation between $Q_n(\y,\A)$ and $Q_n(\x,\A)$ only exists in the special case where $\xi^2$ is non-random. For example, if $\xi$ has a Rademacher distribution, then one has $Q_n(\y,\A)=n^{-1} Q_n(\x,\A)$. 
\zd{If $\E[\xi^2]<\infty$, the denominator in \eqref{eq:L2-selfnormalization-y} can be replaced at first order by $\sqrt{n\E[\xi^2]}$, reducing the problem to the classical theory of the quadratic forms $Q_n(\x,\A)$. Even with such reduction, the limiting distribution of $Q_n(\x,\A)$ and possible characterizations thereof can significantly vary depending on $\A$. It is proved in \cite{bentkus1999optimal} that $Q_n(\x,\A)$ converges in distribution under spectral conditions of $\A$. Gaussian and Gamma limiting distributions were studied in \cite{nualart2005central,nourdin2009noncentral,nourdin2010invariance,nourdin2015optimal}. More recently, in the case that $\A$ is a $\{0,1\}$-valued symmetric matrix with zero diagonals, Bhattacharya et al.\ \cite{bhattacharya2024fluctuations} characterized all possible limits of $Q_n(\x,\A)$ as combinations of Gaussian, weighted centered chi-square, and Gaussian variance-mixture components. Thus, light tails or boundedness of $\xi$ alone do not determine the limit of $Q_n(\x,\A)$ when the matrix $\A$ varies with $n$. When $\E[\xi^2] = \infty$, the limit distribution of $Q_n(\x,\A)$ may be heavy-tailed; see  \cite{bai:ginovyan:taqqu:2016, bai:taqqu:2016} for quadratic forms of \Levy\-driven linear processes. In comparison to the amount of literature in the light-tailed case, 
limit theory for $Q_n(\x,\A)$ and $Q_n(\y,\A)$ remains limited under $\E[\xi^2] = \infty$.}

\medskip

The asymptotic behaviors of the quadratic forms $Q_n(\x,\A)$ and $Q_n(\y,\A)$ find applications in many fields. In this paper, we will focus on a close connection with random matrix theory (RMT), especially with the study of the limiting eigenvalue distribution of sample covariance and correlation matrices. 

To this end, for i.i.d.\ random variables $\{X_{ij}: i,j\ge 1\}$ with $X_{11}\eid \xi$, we construct the data matrix  
\[
\X = \X_n = (X_{ij})_{1 \le i \le p, \, 1 \le j \le n},
\]
the sample covariance matrix $\bfS = \bfS_n = n^{-1} \X \X^{\top}$, and the sample correlation matrix
\begin{align}\label{eq:defRY_new}
	\bfR = \bfR_n = \{\diag(\bfS_n)\}^{-1/2} \, \bfS_n \, \{\diag(\bfS_n)\}^{-1/2} = \Y \Y^{\top}.
\end{align}
Here, the standardized matrix $\Y = \Y_n = (Y_{ij})_{1 \le i \le p,\, 1 \le j \le n}$ has entries
\begin{equation}\label{eq:Yij_new}
	Y_{ij} = Y_{ij}^{(n)} = \frac{X_{ij}}{\sqrt{X_{i1}^2 + \cdots + X_{in}^2}}, \qquad 1 \le i \le p,\, 1 \le j \le n,
\end{equation}
which depend on $n$.  
Since $Y_{ij}$ is invariant under scaling of the $X_{ij}$’s, we will without loss of generality assume $\E[\xi^2] = 1$ whenever $\E[\xi^2]$ is finite.

We will often assume that $\xi$ has a regularly varying tail with index $\alpha > 0$, namely
\begin{equation}\label{eq:regvar}
	\P(|\xi| > x) = x^{-\alpha} L(x), \qquad x > 0,
\end{equation}
where $L$ is slowly varying at infinity\footnote{A function $L:(0,\infty) \to (0,\infty)$ is \emph{slowly varying} if $L(tx)/L(x) \to 1$ as $x \to \infty$ for all $t>0$.}.  
Regularly varying distributions have power-law tails, and all moments of $|\xi|$ of order strictly greater than $\alpha$ are infinite.  
Typical examples include the Pareto distribution with parameter $\alpha$ and the $t$-distribution with $\alpha$ degrees of freedom.  

Our main interest is in the \emph{high-dimensional regime}
\[
p = p_n \to \infty, \qquad \text{as} \quad n \to \infty,
\]
and in particular the \emph{proportional growth regime} $p/n \to \gamma \in (0, \infty)$. In the proportional regime, the spectral properties of the sample covariance matrix $\bfS$ have been extensively studied in RMT since the seminal work of \cite{marchenko:pastur:1967}, which shows that the empirical distribution of the eigenvalues $\lambda_i(\bfS)$ converges weakly to the celebrated Mar\v{c}enko--Pastur (MP) law.  
Moreover, \cite{BS04} established asymptotic normality of suitably centered and normalized linear spectral statistics of $\bfS$ in this regime when $\E[\xi^4] < \infty$.

% In contrast, the study of the sample correlation matrix $\bfR$ is more recent and more limited.  
% A key difficulty is that, compared to the original data matrix $\X$, the entries of the standardized matrix $\Y$ in \eqref{eq:Yij_new} are no longer independent within the same row (although the different rows remain iid).  
% This dependence makes $\bfR$ more challenging to analyze.  
% Under the proportional regime, Lemma~2 in \cite{bai:yin:1993} shows that $\E[\xi^4] < \infty$ is equivalent to
% \[
% \|\diag(\bfS) - \bfI_p\| \cas 0, \quad n \to \infty,
% \]
% where $\|\cdot\|$ is the spectral norm and $\bfI_p$ is the $p$-dimensional identity matrix; see also \cite[Theorem 1.2]{heiny:2022} for a related result in the dependent case.  
% Consequently, under a finite fourth moment, the normalization $\{\diag(\bfS)\}^{-1/2}$ in \eqref{eq:defRY_new} can be replaced by $\bfI_p$, and therefore
% \[
% \max_i |\lambda_i(\bfR) - \lambda_i(\bfS)| \;\le\; \|\bfR - \bfS\| \;\to\; 0 \quad \text{almost surely}.
% \]

The asymptotic behavior of quadratic forms $Q_n(\x,\A)$ and $Q_n(\y,\A)$ closely connects with spectral properties of $\S$ and $\mf{R}$, respectively. Yaskov \cite{yaskov:2016} proved that for sample covariance matrix, the LSD converges to MP law if and only if  $n^{-1}(Q_n(\x,\A)-\E[Q_n(\x,\A)])$ converges to zero in probability. The papers \cite{dong:yao:2025} and \cite[Theorem 2.1]{doernemann:heiny:2025} extend this equivalence result to sample correlation matrices by deriving the necessary and sufficient condition $Q_n(\y,\A)-\E[Q_n(\y,\A)]\cip 0$. 
Furthermore, if $\xi$ is regularly varying with index $\alpha\in (0,2)$, then the LSD is the $\alpha$-heavy \MP~law $H_{\alpha,\gamma}$ \cite{Heiny:Yao:AOS}.
This suggests that the quadratic form $Q_n(\y,\A)$ exhibits different behaviors than in the light-tailed case. 

In \cite{Heiny:Yao:AOS}, it is proved that $H_{\alpha,\gamma}$ converges to a zero-inflated Poisson distribution as $\alpha\downarrow 0$,  which has countably many atoms in the LSD; as $\alpha\uparrow 2$, $H_{\alpha,\gamma}$ converges to the classical MP law, which has no atom except at 0. It remains mysterious whether or not atoms exist in the LSD for the general case $\alpha\in (0,2)$: is the countably many atoms a transition phenomena at $\alpha=0$ or at some $\alpha_0\in (0,2)$? The detection of such atoms is unfeasible by the moment method which is used in the proof of \cite{Heiny:Yao:AOS}.

\medskip\noindent{\bf Contributions of this paper.} 
In this work, we derive the limiting distribution of $Q_n(\y,\A)$ when $\xi$ is regularly varying with index $0<\alpha<2$. We firstly establish the limiting distribution for the case that diagonal elements of $\A$ are uniformly bounded, and then we extend to the unbounded case. We also discuss the tail probability of $Q_n(\y,\A)$. The limiting law of $Q_n(\y,\A)$ allows an implicit representation of the $\alpha$-heavy MP law $H_{\alpha,\gamma}$. From this representation, we prove that $H_{\alpha,\gamma}$ has no atom except at zero. The typical difficulty in the proof is that the resolvent's diagonal entries do not concentrate anymore around their mean and thus,  local laws as in the light-tailed case cease to hold. To settle this difficulty, we represent the Stieltjes transform of the $\alpha$-heavy MP law with the weak limit point of the resolvent's diagonal entries. The no-atom result is then deduced by analyzing the boundary behavior of this  representation. We also provide a matrix construction for the boundary case $\alpha=0$ in the $\alpha$-heavy MP law.

For independent interest, in the appendix we give a Hanson--Wright inequality for $Q_n(\y,\A)$ when $\xi$ is sub-Gaussian. 

\medskip\noindent{\bf Notations.}\qquad
Convergence in distribution (resp.\ probability) is denoted by $\cid$ (resp.\ $\cip$), and equality in distribution by $\eid$. Weak convergence of measure is denoted by $\cw$. 
Unless explicitly stated otherwise, all limits are for $n \to \infty$.  
A probability measure $\mu$ is \emph{degenerate} if $\mu = \delta_b$ for some $b \in \R$, where $\delta_b$ is the Dirac measure at $b$.  
We denote $\lambda$ as the Lebesgue measure on $\R$, and $\N_0=\N\cup\{0\}$.

For an $n \times n$ Hermitian matrix $\A$ we denote its eigenvalues by $\lambda_1(\A) \ge \cdots \ge \lambda_n(\A)$ and the   
empirical spectral distribution (ESD) of $\A$ is given by 
$\mu_\A = \frac{1}{n} \sum_{i=1}^n \delta_{\lambda_i(\A)}$.
A probability measure $\mu$ on $\R$ is uniquely characterized by its {\em Stieltjes transform}
\begin{equation*}
	s_{\mu}(z)= \int_{\R} \frac{1}{x-z} \, \mu(dx)\,, \quad z\in \C\backslash \operatorname{supp}(\mu)\,.
\end{equation*} 
Weak convergence of a sequence of probability measures $(\mu_n)$ to $\mu$ is equivalent to $s_{\mu_n}(z) \to s_{\mu}(z)$ a.s.~for all $z\in \C^+$ (see \cite[Chapter~3]{bai:silverstein:2010} or \cite{yao:zheng:bai:2015}).
The operator (resp. Frobenius) norm of a matrix $\A$ is denoted by $\|A\|$ (resp. $\|A\|_F)$. The Euclidean norm of a vector $\x$ is $\twonorm{\x}=\sqrt{\x^\top \bar{\x}}$.

For sequences $(a_n)$ and $(b_n)$ we write
$a_n = O(b_n)$ if $|a_n/b_n| \le C$ for some $C>0$ and all sufficiently large $n$,
$a_n = o(b_n)$ if $a_n/b_n \to 0$,
$a_n \sim b_n$ if $a_n/b_n \to 1$,
and $a_n \lesssim b_n$ if $a_n \le C b_n$ for some $C>0$ and all sufficiently large $n$.

We write $\mathcal{C}_b(\mathbb{R})$ for the space of bounded continuous functions on $\mathbb{R}$.
We use the principal branch of the complex logarithm (and power) with branch cut along $(-\infty,0]$.
Let $\Gamma(\cdot)$ and $B(\cdot,\cdot)$ denote the gamma and beta functions.

\section{Preliminaries} \label{sec:preliminaries}

We study quadratic forms constructed from self-normalized random vectors such as $\y$ in \eqref{eq:L2-selfnormalization-y} or the columns $\y_1, \ldots, \y_p$ of the matrix $\Y^{\top}$ in \eqref{eq:defRY_new}. We recall that the random vectors $\y, \y_1, \ldots, \y_p$ have the same distribution and therefore, we choose to formulate our results for 
$$\y_1=(Y_{11},\ldots, Y_{1n})^\top\,.$$
Unless explicitly stated otherwise, we assume that $\xi$ is symmetric and regularly varying with index $\alpha \in (0,2)$, that is, $\xi \eid -\xi$ and the regular variation condition \eqref{eq:regvar} holds. This means that $\xi$ is heavy-tailed with infinite variance.
From its definition in \eqref{eq:defRY_new} it follows that $Y_{ij}$ also possesses a symmetric distribution. 

For integers $k_1,\ldots,k_r\ge 1$, we need the notation 
$$
\beta_{2k_1,2k_2,\cdots,2k_r}=\E[Y_{11}^{2k_1}Y_{12}^{2k_2}\cdots Y_{1r}^{2k_r}]
$$
for the mixed moments of the components of $\y_1$. Their asymptotic behavior is described in the following lemma, whose proof can be found in \cite[p.~4]{albrecher:teugels:2007}.
\begin{lemma}\label{lem:allmoments}
	Assume that $\xi$ is regularly varying with index $\alpha \in (0,2)$. Then it holds
	\begin{equation}\label{moment}
		\lim_{\nto} n^r \beta_{2k_1,\ldots, {2k_r}} = \frac{\Big(\frac{\alpha}{2}\Big)^{r-1} \Gamma(r) \prod_{j=1}^r \Gamma(k_j-\alpha/2)}{ \Big(\Gamma(1-\alpha/2)\Big)^r \Gamma(k_1+\cdots+k_r)}\,.
	\end{equation}
	In particular, we have
	\begin{equation}\nonumber
		\lim_{\nto} n \beta_{2k} = \frac{\Gamma(k-\alpha/2)}{\Gamma(1-\alpha/2) \Gamma(k)}\,, \qquad k\ge 1\,.
	\end{equation}
\end{lemma}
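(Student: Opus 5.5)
The natural route is the Laplace-transform representation of the self-normalized moment. For $s>0$ we have $\int_0^\infty t^{m-1}e^{-ts}\,dt=\Gamma(m)s^{-m}$. With $K=k_1+\cdots+k_r$ and $S=\sum_{j=1}^n X_{1j}^2$, this gives
\begin{equation*}
\beta_{2k_1,\ldots,2k_r}=\frac{1}{\Gamma(K)}\int_0^\infty t^{K-1}\,\E\Big[\prod_{j=1}^r X_{1j}^{2k_j}e^{-tX_{1j}^2}\Big]\,\big(\E[e^{-t\xi^2}]\big)^{n-r}\,dt .
\end{equation*}
We used independence of the components here, which factorizes the expectation exactly. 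Set $\varphi(t)=\E[e^{-t\xi^2}]$ and $\psi_k(t)=\E[\xi^{2k}e^{-t\xi^2}]$. Then
\begin{equation*}
\beta=\Gamma(K)^{-1}\int_0^\infty t^{K-1}\prod_j\psi_{k_j}(t)\,\varphi(t)^{n-r}\,dt .
\end{equation*}

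The key inputs are the small-$t$ asymptotics. Since $\xi^2$ is regularly varying with index $\alpha/2\in(0,1)$, Karamata's Tauberian theorem gives
\begin{equation*}
1-\varphi(t)\sim \Gamma(1-\alpha/2)\,\P(\xi^2>1/t)=\Gamma(1-\alpha/2)\,t^{\alpha/2}L(t^{-1/2}), \qquad t\downarrow0 .
\end{equation*}
For $\psi_k$ one computes
\begin{equation*}
\psi_k(t)=\int_0^\infty x^k e^{-tx}\,dF_{\xi^2}(x)\sim \frac{\alpha}{2}\,\Gamma(k-\alpha/2)\,t^{\alpha/2-k}L(t^{-1/2}), \qquad t\downarrow0 .
\end{equation*}
This follows from integration by parts against the tail $\bar F(x)=x^{-\alpha/2}L(\sqrt x)$, or equivalently from the identity $\psi_k=(-1)^k\varphi^{(k)}$ together with the monotone density theorem.

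Next substitute $t=a_n^{-2}u$, where $a_n$ is chosen so that $n\P(\xi^2>a_n^2)\to1$. Then $n(1-\varphi(u/a_n^2))\to\Gamma(1-\alpha/2)u^{\alpha/2}$, so $\varphi^{n-r}\to\exp(-\Gamma(1-\alpha/2)u^{\alpha/2})$. Also
\begin{equation*}
a_n^{-2k}\psi_k(u/a_n^2)\cdot n\to \frac{\alpha}{2}\Gamma(k-\alpha/2)\,u^{\alpha/2-k}.
\end{equation*}
The powers of $a_n$ cancel: $t^{K-1}dt$ contributes $a_n^{-2K}$, which matches the $a_n^{2K}$ coming from the $\psi$ factors. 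Hence
\begin{equation*}
n^r\beta\to \Gamma(K)^{-1}\Big(\frac{\alpha}{2}\Big)^r\prod_j\Gamma(k_j-\alpha/2)\int_0^\infty u^{r\alpha/2-1}e^{-cu^{\alpha/2}}\,du, \qquad c=\Gamma(1-\alpha/2).
\end{equation*}
Substituting $v=cu^{\alpha/2}$ evaluates the integral as $\frac{2}{\alpha}\,c^{-r}\Gamma(r)$. This yields exactly $(\alpha/2)^{r-1}\Gamma(r)\prod_j\Gamma(k_j-\alpha/2)/(c^r\Gamma(K))$, and the case $r=1$ is the stated specialization.

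The main obstacle is justifying dominated convergence in $u$ uniformly in $n$. For large $u$ I would use Potter bounds to get $n(1-\varphi(u/a_n^2))\ge c'u^{\alpha/2-\epsilon}$ for $u\ge1$ and large $n$. Combined with $1-x\le e^{-x}$, this gives $\varphi^{n-r}\le C e^{-c'u^{\alpha/2-\epsilon}}$. For $u$ large relative to $a_n^2$, i.e.\ $t$ bounded away from $0$, I would use $\varphi(t)\le\varphi(t_0)<1$, which gives geometric decay in $n$. For small $u$, Potter bounds give $n a_n^{-2k}\psi_k(u/a_n^2)\le Cu^{\alpha/2-k\pm\epsilon}$. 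The integrand is then bounded by $u^{r\alpha/2-1-r\epsilon}$, which is integrable near $0$ once $\epsilon$ is small. In the near-$0$ region one may also simply use $\psi_k\le\E[\xi^{2k}e^{-t\xi^2}]\le C_k t^{-k}$, which gives $t^{K-1}\prod\psi_{k_j}\le Ct^{-1}$. That bound is not quite integrable alone, so the Potter refinement is needed there.
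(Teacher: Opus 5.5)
Your proposal is correct and is essentially the argument of the reference the paper cites for this lemma \cite{albrecher:teugels:2007}: the Laplace-transform representation of the self-normalized mixed moment, Tauberian (Karamata/monotone density) asymptotics of $1-\varphi$ and $\psi_k=(-1)^k\varphi^{(k)}$ at the origin, the rescaling $t=u/a_n^2$, and dominated convergence. One step is worth making explicit, the region $t\ge t_0$: the crude bound $\psi_k(t)\le C_k t^{-k}$ leaves a divergent $\int^\infty dt/t$, so instead bound $\int_{t_0}^\infty t^{K-1}\prod_j\psi_{k_j}(t)\,dt\le \Gamma(K)$ using the same representation with $n=r$, which makes that piece $O\big(n^r\varphi(t_0)^{n-r}\big)\to 0$.
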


\begin{remark}
	It is interesting to compare the order of those mixed moments with the case of more light-tailed distributions of $\xi$. For example, if $\E[\xi^{2\max_i k_i}]<\infty$, then one can derive similarly to \cite{albrecher:teugels:2007} that
	\begin{equation*}
		\lim_{\nto} n^{k_1+\cdots+k_r} \beta_{2k_1,\ldots, {2k_r}} = \prod_{i=1}^r \E[\xi^{2 k_i}]\,.
	\end{equation*}
	A glance at \eqref{moment} shows that $\beta_{2k_1,\ldots, {2k_r}}$ is of order $n^{-r}$ in the heavy-tailed case, while it is only of order $n^{-(k_1+\cdots+k_r)}$ in the light-tailed case. This stark difference ultimately results in $\y_1^\top \A\y_1$ concentrating around its mean in the light-tailed case, whereas  concentration does not happen in the infinite variance setting.
\end{remark}

Now let $\{\A_n\}_{n=1}^\infty$ be a sequence of $n\times n$ random Hermitian matrices independent of $\y_1$. We write $\A_n=(a_{ij}^{(n)})_{i,j=1}^n$ to highlight that the entries $a_{ij}^{(n)}$ may depend on $n$. In our proofs, we will typically omit superscript $(n)$ for simplicity.
Our main aim is to provide limit theory for the quadratic forms
\begin{equation*}
	Q_n=Q_n(\y_1,\A_n)=\y_1^\top \A_n\y_1\,.
\end{equation*}
$Q_n$ can be decomposed into its diagonal and off-diagonal parts as follows:
\bea\label{Def-Qn1Qn2}
Q_{n,1}=\mf{y}_1^\top \operatorname{\diag}(\A_n) \mf{y}_1,\qquad Q_{n,2}=\mf{y}_1^\top \big(\A_n-\operatorname{\diag}(\A_n)\big) \mf{y}_1.
\eea

\jh{
\begin{remark}
For slowly varying distributions of $\xi$, that is $\P(|\xi|>x)=L(x)$ with slowly varying function $L$, it is well-known (e.g.~\cite[Theorem~1]{haeusler:mason:1991}) that 
\begin{equation*} 
\E\big[ |Y_{1k_1}|\big] \to 1\,, \qquad \nto\,,
\end{equation*}
where $k_1=\arg \max_{j=1,\ldots, n} |Y_{1j}|$.
This in agreement with the principle of ``one big jump'' which is folklore in extreme value theory. This principle says that one of the heavy-tailed $|X_{1j}|$'s is exceptionally large compared to the rest. The heavier the tails, the more pronounced this effect becomes.
Lemma~\ref{lem:structure} gives the following approximation for $\y_1$,
\begin{equation}\label{eq:y1}
\twonorm{\y_1-\sign(Y_{1k_1}) \e_{k_1}}\cip 0\,,
\end{equation}
where $\e_i$ is the $i$-th canonical basis vector of $\R^n$. We see that
\begin{equation*}
Q_n(\sign(Y_{1k_1}) \e_{k_1},\A_n)=Q_{n,1}(\sign(Y_{1k_1}) \e_{k_1},\A_n)
\end{equation*}
since the off-diagonal part $Q_{n,2}(\sign(Y_{1k_1}) \e_{k_1},\A_n)=\e_{k_1}^\top \big(\A_n-\operatorname{\diag}(\A_n)\big) \e_{k_1} =0$ vanishes.
\end{remark}

For $\alpha\in(0,2)$, the approximation in \eqref{eq:y1} does not hold anymore. Nevertheless, the heuristic about the off-diagonal part remains asymptotically valid under a mild condition on $\A_n$.}

\zd{\begin{assumption}[Off-diagonal negligibility]\label{ass:offdiag}
	The off-diagonal part of $\A_n$ satisfies
	\begin{equation}\label{eq:offdiag-assumption}
\lim_{\nto} n^{-2} \E[\|\A_n-\operatorname{\diag}(\A_n) \|_{\operatorname{F}}^2] =0.
	\end{equation}
\end{assumption}}
\begin{lemma}\label{Prop-quadratic-form-Qn2}
	\zd{Under Assumption~\ref{ass:offdiag} it holds $Q_{n,2}\cip 0$ as $\nto$.}
\end{lemma}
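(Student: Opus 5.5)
The plan is a second-moment computation, using Chebyshev's inequality together with the symmetry of $\xi$. Write $\B_n=\A_n-\diag(\A_n)$ with entries $b_{ij}=a_{ij}\1_{\{i\neq j\}}$, so that
\[
Q_{n,2}=\sum_{i\neq j} b_{ij}Y_{1i}Y_{1j}.
\]
Since $\A_n$ is independent of $\y_1$, we condition on $\A_n$ and first compute $\E[|Q_{n,2}|^2\mid \A_n]$. If the entries are complex, note that $b_{ij}+b_{ji}=2\Re b_{ij}$; this changes nothing below except for harmless constants.

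\textbf{Step 1: the cross terms vanish.} Expanding the square gives
\[
\E\big[|Q_{n,2}|^2\mid\A_n\big]=\sum_{i\neq j}\sum_{k\neq l} b_{ij}\bar b_{kl}\,\E[Y_{1i}Y_{1j}Y_{1k}Y_{1l}].
\]
Because $\xi$ is symmetric, the vector $(X_{11},\ldots,X_{1n})$ is invariant in law under any sign flip $X_{1m}\mapsto -X_{1m}$. The denominator $\|\x\|_2$ is unchanged by such a flip. Hence $\E[Y_{1i}Y_{1j}Y_{1k}Y_{1l}]=0$ unless every index appears an even number of times. With $i\neq j$ and $k\neq l$, this forces $\{k,l\}=\{i,j\}$.

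For the surviving terms we need the moment to be finite. This holds because $|Y_{1i}|\le 1$.

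\textbf{Step 2: the bound.} The surviving terms give
\[
\E\big[|Q_{n,2}|^2\mid\A_n\big]=\sum_{i\neq j} b_{ij}(\bar b_{ij}+\bar b_{ji})\,\beta_{2,2}\le 2\beta_{2,2}\,\|\B_n\|_F^2,
\]
using Cauchy--Schwarz, $|b_{ij}\bar b_{ji}|\le (|b_{ij}|^2+|b_{ji}|^2)/2$. Here $\beta_{2,2}=\E[Y_{11}^2Y_{12}^2]$ does not depend on $(i,j)$, by exchangeability of the components of $\y_1$.

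Lemma~\ref{lem:allmoments} with $r=2$ and $k_1=k_2=1$ gives $n^2\beta_{2,2}\to c_\alpha$, where
\[
c_\alpha=\frac{(\alpha/2)\,\Gamma(1-\alpha/2)^2}{\Gamma(1-\alpha/2)^2\,\Gamma(2)}=\frac{\alpha}{2}<\infty.
\]
So $\beta_{2,2}\le C n^{-2}$ for large $n$.

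Taking expectations over $\A_n$ then yields
\[
\E|Q_{n,2}|^2\le 2Cn^{-2}\E\|\B_n\|_F^2\to 0
\]
by Assumption~\ref{ass:offdiag}. Chebyshev's inequality then gives $Q_{n,2}\cip 0$.

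\textbf{Main obstacle.} The only delicate point is the vanishing of the mixed moments in Step 1. Independence of the $Y_{1j}$ fails because of the common denominator. The sign-flip invariance of the self-normalized vector replaces independence here, and this is where symmetry of $\xi$ is essential. If symmetry were dropped, one would need additional control of $\E[Y_{11}Y_{12}Y_{13}^2]$-type terms, which I would bound using the order-$n^{-r}$ structure of Lemma~\ref{lem:allmoments}. Under the paper's standing symmetry assumption this is unnecessary.
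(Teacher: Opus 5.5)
Your proposal is correct and follows essentially the paper's route: a second-moment computation in which sign-flip invariance of $\y_1$ kills every term except $\{k,l\}=\{i,j\}$. This gives $\E|Q_{n,2}|^2\le 2\beta_{2,2}\,\E\|\A_n-\diag(\A_n)\|_F^2$, and Markov's inequality finishes. The only minor difference is that the paper controls $\beta_{2,2}$ through the identity $n\beta_4+n(n-1)\beta_{2,2}=\E\big[(\sum_i Y_{1i}^2)^2\big]=1$ rather than Lemma~\ref{lem:allmoments}, so that step needs no tail assumption on $\xi$.
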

\begin{proof}
	Since the $Y_{1i}$'s are also symmetrically distributed, we get
	\begin{align*}
		\E[|Q_{n,2}|^2]&=\sum_{1\le i_1\ne i_2 \le n}\sum_{1\le j_1\ne j_2\le n}\E\big[a_{i_1i_2}\overline{a_{j_1j_2}}\big]\E[Y_{1i_1}Y_{1i_2}Y_{1j_1}Y_{1j_2}]\\
		&=2\sum_{1\le i_1\ne i_2\le n} \E\big[|a_{i_1 i_2}|^2\big]\E[Y_{11}^2 Y_{12}^2]
		= 2 \,\E[\|\A_n-\operatorname{\diag}(\A_n) \|_{\operatorname{F}}^2] \, \beta_{2,2}\\ 
		&\le \frac{2 \,
			\E[\|\A_n-\operatorname{\diag}(\A_n) \|_{\operatorname{F}}^2]}{n^2}\to 0\,,
	\end{align*}
	where $n^2\beta_{2,2}\le 1$ was used for the inequality in the last line.
	Then Markov's inequality implies $Q_{n,2}\stackrel{\P}{\rw}0$.
\end{proof}

Lemma~\ref{Prop-quadratic-form-Qn2} shows that the asymptotic behavior of $Q_n=Q_{n,1}+Q_{n,2}$ is determined by $Q_{n,1}$ provided that $\E[\|\A_n-\operatorname{\diag}(\A_n) \|_{\operatorname{F}}^2]=o(n^2)$. Assumption~\ref{ass:offdiag} is satisfied by several natural classes of non-diagonal matrices, as given in the following example. In particular, the case that $\A$ is a resolvent matrix satisfies Assumption \ref{ass:offdiag}, which plays an important role in the application to RMT.
\medskip

\zd{\begin{example}
For instance, if $ \E\|\A_n\|^2=o(n)$, then
\[
	\E\|\A_n-\operatorname{\diag}(\A_n)\|_{\operatorname{F}}^2
	\leq \E\|\A_n\|_{\operatorname{F}}^2
	\leq n\,\E\|\A_n\|^2=o(n^2),
\]
and hence Assumption~\ref{ass:offdiag} holds. Therefore, it is always satisfied for resolvent matrices $\A_n$ since in this case $\sup_{n\ge 1} \E\|\A_n\|^2<\infty$. Resolvents are an essential tool for the analysis of spectral properties of large random matrices \cite{yao:zheng:bai:2015, bai:silverstein:2010}. Furthermore, \eqref{eq:offdiag-assumption} holds for orthogonal projection matrices. Indeed, if $\mathbf{P}_n$ is an $n$-dimensional projection matrix, then
\[
	\|\mathbf{P}_n-\operatorname{\diag}(\mathbf{P}_n)\|_{\operatorname{F}}^2
	=\operatorname{rank}(\mathbf{P}_n)-\sum_{i=1}^n (\mathbf{P}_n)_{ii}^2
	\leq n.
\]
In particular, for a Haar orthogonal projection $\mathbf{P}_n=\mathbf{U}_n\operatorname{\diag}(\I_{r_n},0)\mathbf{U}_n^\top$ with $r_n/n\to\gamma\in[0,1]$, its diagonal empirical distribution converges weakly almost surely to the Dirac measure $\delta_\gamma$. (Indeed, $(\mathbf{P}_n)_{ii}\sim \operatorname{Beta}(r_n/2,(n-r_n)/2)$, and standard beta concentration, a union bound and the Borel--Cantelli lemma yield $\max_i|(\mathbf{P}_n)_{ii}-r_n/n|\to0$ almost surely.) \\
Assumption~\ref{ass:offdiag} also holds for uniformly bounded sparse matrices with $o(n^2)$ non-zero off-diagonal entries, including band matrices with bandwidth $w_n=o(n)$, bounded-degree graph adjacency matrices, block diagonal matrices with maximal block size $o(n)$, and Toeplitz-type matrices with square-summable off-diagonal profiles.
\end{example}}
\medskip

\zd{On the other hand, if Assumption~\ref{ass:offdiag} fails, the limiting distribution of $Q_n$ can vary a lot. This is illustrated in the two following examples and in Section~\ref{sec:additional_examples}. 
\begin{example}
\label{ex:resolvent}
For $\theta\ne 0$, take $\mf{A}_n=(1-\theta)\mf{I}_n+\theta \mf{1}_n\mf{1}_n^\top$, so that $a_{ii}^{(n)}=1$ and $a_{ij}^{(n)}=\theta$ for $i\ne j$. Then
$$
n^{-2}\|\mf{A}_n-\diag(\mf{A}_n)\|_{\operatorname{F}}^2\rw \theta^2>0.
$$
Thus the Frobenius-norm condition \eqref{eq:offdiag-assumption} fails. On the other hand, since $\sum_{i=1}^n Y_{1i}^2=1$ it is easy to see that
$$
Q_n=1-2\theta+\theta\l(\sum_{i=1}^n Y_{1i}\r)^2.
$$
By the the theory for self-normalized sums \cite{logan:mallows:rice:shepp:1973}, the limiting distribution of $\sum_{i=1}^n Y_{1i}$ is a stable law with infinite variance. Consequently, varying $\theta$ yields a family of possible weak limits for $Q_n$. It is worth pointing out that in this example the random contribution to $Q_n$ comes from $Q_{n,2}$, whose limiting distribution cannot be characterized through moments.
\end{example}
}

\medskip

\zd{Below, we give another example which illustrates that failure of Assumption~\ref{ass:offdiag} can still lead to a limiting distribution of $Q_n$ with all moments finite. 
\begin{example}
Let $\A_n$ be independent of $\y_1$, with $a_{ii}^{(n)}=0$ and 
$a_{ij}^{(n)}=a_{ji}^{(n)}=G_{ij}$, $1\le i<j\le n$,
where the $G_{ij}$'s are iid $\ml{N}(0,1)$. Then
\[
	n^{-2}\E\|\A_n-\diag(\A_n)\|_{\operatorname{F}}^2
	=\frac{n(n-1)}{n^2}\rw 1,
\]
so Assumption \ref{ass:offdiag} fails. On the other hand,
\[
	Q_n=Q_{n,2}=2\sum_{1\le i<j\le n}G_{ij}Y_{1i}Y_{1j}.
\]
Conditionally on $\y_1$, $Q_n$ is a linear combination
of independent Gaussian variables $G_{ij}$ with variance 
\[
	4\sum_{1\le i<j\le n}Y_{1i}^2Y_{1j}^2
	=2\left\{\left(\sum_{i=1}^nY_{1i}^2\right)^2-\sum_{i=1}^nY_{1i}^4\right\}
	=2(1-R_n)\,,
\]
where $R_n=\sum_{i=1}^n Y_{1i}^4$. Therefore, we obtain
\[
	Q_n \eid \sqrt{2(1-R_n)}\,Z\,,
\]
where $Z\sim\ml{N}(0,1)$ is independent of $\y_1$.  Since $X_{1i}^2$ is regularly varying with index $\alpha/2$, the usual point process convergence for regularly varying random variables \cite[Proposition~4.2]{albrecher:teugels:2007} yields that $R_n \cid R_{\alpha}:=U/V^2$, where the dependent random variables $U$ and $V$ follow $\alpha/4$ (resp.) $\alpha/2$-stable distributions.
%\cite[Corollary 1]{LePage1981ConvergenceStableDistribution} yields that $R_n$ converges to some distribution $R_\alpha$.
Consequently,
\[
	Q_n\cid Q_\alpha:=\sqrt{2(1-R_\alpha)}\,Z, \qquad \nto\,,
\]
where $Z\sim\ml{N}(0,1)$ is independent of $R_\alpha$. 
%The limit is non-degenerate: by Lemma~\ref{lem:allmoments},
%\[\E R_\alpha=\lim_{\nto} n\beta_4=1-\frac{\alpha}{2}, \qquad\text{and hence}\qquad \E Q_\alpha^2=\alpha>0.\]
%Moreover, for every $\ell\in\N$,
%\[\lim_{\nto}\E[Q_n^{2\ell}]=
	%\frac{(2\ell)!}{\ell!}\,\E\big[(1-R_\alpha)^\ell\big]\leq \frac{(2\ell)!}{\ell!},\qquad\lim_{\nto}\E[Q_n^{2\ell-1}]=0.
%\]
%In fact, $\E e^{tQ_\alpha}=\E e^{t^2(1-R_\alpha)}\le e^{t^2}$ for all $t\in\R$, so the limiting law is characterized by its moments. 
Using the fact that $R_\alpha \in [0,1]$, it is easy to verify Carleman's condition for $Q_\alpha$.
Hence, this example shows that the off-diagonal part can contribute to a non-degenerate limit distribution which is uniquely characterized through its moments.
\end{example}}
\medskip

\zd{The preceding examples show why some control of the off-diagonal part, like Assumption~\ref{ass:offdiag}, is
unavoidable if the limit is to be characterized explicitly; see also the additional examples in Section~\ref{sec:additional_examples}. The limiting distribution of $Q_n$ is highly sensitive to the structure of $\A_n$. Such sensitivity is consistent with the literature on the light-tailed case.}

\section{Main results}
In this section, we consider the setting that was introduced at the beginning of Section~\ref{sec:preliminaries}. In particular, we assume that $\xi$ is symmetric and regularly varying with index $\alpha \in (0,2)$. The main objective will be to describe the limiting distribution of $Q_{n,1}$ defined in \eqref{Def-Qn1Qn2}.

\subsection{Limiting law: bounded diagonal case}

First, we derive the limiting law of $Q_{n,1}$ in the case where the diagonal entries of $\A_n$ are uniformly bounded.
\begin{theorem}\label{Prop-quadratic-form}
	If $M:=\limsup_{n\rw\infty}\max_{1\leq i\leq n}|a_{ii}^{(n)}|<\infty$ almost surely and $\frac{1}{n}\sum_{i=1}^n\delta_{a_{ii}^{(n)}}$ converges weakly almost surely to a deterministic probability measure $\nu$, then $Q_{n,1}$ converges in distribution to a random variable with induced probability measure $\mu_{\nu,\alpha}$, whose Stieltjes transform is given by
	\bea\label{Eq-Stieltjes-transform-Quadratic-form}
	s_{\mu_{\nu,\alpha}}(z)=-\frac{\int_{\R}(z-x)^{\frac{\alpha}{2}-1}\nu(dx)}{\int_{\R}(z-x)^{\frac{\alpha}{2}}\nu(dx)},\qquad z\in\C\backslash\operatorname{supp}(\nu).
	\eea
	Moreover, $\mu_{\nu,\alpha}$ is uniquely characterized by its moments $m_\ell=\int_\R x^\ell \mu_{\nu,\alpha}(dx)$, $\ell\ge 1$, where
	\begin{equation*}
		m_\ell=\sum_{r=1}^\ell \l(\frac{\alpha}{2}\r)^{r-1} \frac{\ell}{r\, (\Gamma(1-\frac{\alpha}{2}))^r}
		\sum_{\substack{\ell_1,\cdots,\ell_r\geq 1\\ \ell_1+\cdots+\ell_r=\ell}} \prod_{s=1}^r \frac{\Gamma\l(\ell_s-\frac{\alpha}{2}\r)}{\ell_s!} \int_\R x^{\ell_s}\nu(dx)\,,   
	\end{equation*}
	and the support of $\mu_{\nu,\alpha}$ is contained in $[-M,M]$.
\end{theorem}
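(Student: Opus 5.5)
The plan is to use the method of moments, conditioning on $\A_n$ (which is independent of $\y_1$), and then to recognise the resulting moment sequence as that of the measure with Stieltjes transform \eqref{Eq-Stieltjes-transform-Quadratic-form}.

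\textbf{Step 1: moments of $Q_{n,1}$ given $\A_n$.} Since $Q_{n,1}=\sum_{i=1}^n a_{ii}Y_{1i}^2$, expanding the $\ell$-th power gives
\begin{equation*}
\E[Q_{n,1}^\ell\mid \A_n]=\sum_{r=1}^\ell \frac{1}{r!}\sum_{\substack{\ell_1,\ldots,\ell_r\ge1\\ \ell_1+\cdots+\ell_r=\ell}}\binom{\ell}{\ell_1,\ldots,\ell_r}\,\beta_{2\ell_1,\ldots,2\ell_r}\sum_{i_1,\ldots,i_r\ \text{distinct}} a_{i_1i_1}^{\ell_1}\cdots a_{i_ri_r}^{\ell_r}.
\end{equation*}
By Lemma~\ref{lem:allmoments}, $n^r\beta_{2\ell_1,\ldots,2\ell_r}$ converges to an explicit limit. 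The restriction to distinct indices can be removed at a cost of $O(n^{r-1}M^\ell)$, so $n^{-r}\sum_{\text{distinct}}\prod_s a_{i_si_s}^{\ell_s}$ behaves like $\prod_s n^{-1}\sum_i a_{ii}^{\ell_s}$. Because the diagonal entries are eventually bounded by $M+1$ almost surely and $\frac1n\sum_i\delta_{a_{ii}}\cw\nu$ almost surely, this converges almost surely to $\prod_s\int x^{\ell_s}\nu(dx)$. Using $\Gamma(r)/r!=1/r$ and $\ell!/\prod_s\ell_s!$, and collecting the constants, reproduces exactly the stated $m_\ell$. Since $|Q_{n,1}|\le \max_i|a_{ii}|$ (because $\sum_i Y_{1i}^2=1$), the conditional law is supported in $[-M-\epsilon,M+\epsilon]$ eventually. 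Moment convergence on a compact set then gives conditional weak convergence almost surely. Dominated convergence on characteristic functions removes the conditioning, and the support of $\mu_{\nu,\alpha}$ lies in $[-M,M]$ as a weak limit. Uniqueness is immediate because the moments of a compactly supported measure satisfy $|m_\ell|\le M^\ell$, which is Carleman's condition.

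\textbf{Step 2: identifying the Stieltjes transform.} For large $|z|$, expand $-s(z)=\sum_{\ell\ge0}m_\ell z^{-\ell-1}$ and compare with the right-hand side of \eqref{Eq-Stieltjes-transform-Quadratic-form}. Write $(z-x)^{\beta}=z^\beta(1-x/z)^\beta$ and set $\phi(w)=\int(1-xw)^{\alpha/2}\nu(dx)$ with $w=1/z$. Then the numerator over the denominator equals $w\,\phi_1(w)/\phi(w)$ with $\phi_1(w)=\int(1-xw)^{\alpha/2-1}\nu(dx)$, and $\phi_1=\phi-\frac{2w}{\alpha}\phi'(w)$. Hence
\begin{equation*}
-s(z)=w\bigl(1-\tfrac{2}{\alpha}w(\log\phi)'(w)\bigr),
\end{equation*}
so it suffices to show that $\sum_{\ell\ge1}m_\ell w^{\ell}=-\frac{2}{\alpha}w(\log\phi)'(w)$. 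Write $\phi=1-\frac{\alpha}{2}\psi$ with
\begin{equation*}
\psi(w)=\sum_{k\ge1}\frac{\Gamma(k-\alpha/2)}{\Gamma(1-\alpha/2)\,k!}\Bigl(\int x^k\nu(dx)\Bigr)w^k,
\end{equation*}
which follows from the binomial series, since $-\binom{\alpha/2}{k}(-1)^k=\frac{\alpha}{2}\frac{\Gamma(k-\alpha/2)}{\Gamma(1-\alpha/2)k!}$. Then $\log\phi=-\sum_r\frac{1}{r}(\frac{\alpha}{2})^r\psi^r$, and applying $-\frac2\alpha w\,\frac{d}{dw}$ turns the coefficient of $w^\ell$ in $\psi^r/r$ into $\frac{\ell}{r}$ times it. This yields exactly $\sum_r(\frac\alpha2)^{r-1}\frac{\ell}{r}[w^\ell]\psi^r=m_\ell$.

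\textbf{Extension and main obstacle.} The identity holds for $|z|>M$ by convergence of the power series. Both sides are analytic on $\C\setminus[-M,M]$, so it extends there; one checks that the denominator does not vanish off $\operatorname{supp}(\nu)$ using an argument bound on $(z-x)^{\alpha/2}$ for $z\in\C^+$, all of whose arguments lie in $(0,\pi\alpha/2)$. I expect the main obstacle to be the careful bookkeeping in Step 2, namely matching the combinatorial constants and justifying the branch and analytic continuation. Step 1 is routine given Lemma~\ref{lem:allmoments}.
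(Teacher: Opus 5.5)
Your proposal is correct and follows essentially the same route as the paper: you expand $\E[Q_{n,1}^\ell\mid \A_n]$ by the multinomial theorem and use Lemma~\ref{lem:allmoments}, get Carleman's condition from $|m_\ell|\le M^\ell$, identify $\sum_{\ell\ge1} m_\ell w^\ell$ with $-\frac{2}{\alpha}w\,(\log\phi)'(w)$ for $\phi(w)=\int_\R(1-xw)^{\alpha/2}\nu(dx)$, continue analytically, and condition on $\A_n$ in the random case. The only differences are presentational: the paper reaches the same identity by differentiating $\sum_\ell m_\ell z^\ell/\ell=-\frac{2}{\alpha}\log\phi(z)$, and your explicit handling of the random case (almost sure conditional convergence plus dominated convergence of characteristic functions) and of the support bound via $|Q_{n,1}|\le\max_i|a_{ii}|$ spells out what the paper covers with ``analogously by conditioning'' and a Fatou argument.
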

\begin{proof}
	We start with the case that $\{\A_n\}_{n=1}^\infty$ are deterministic.  By the multinomial theorem we have for $\ell\in \N$,
	\begin{align*}
		\E[Q_{n,1}^\ell]&= \E\Big[\big(a_{11} Y_{11}^2+ a_{22}Y_{12}^2 + \cdots+ a_{nn} Y_{1n}^2 \big)^\ell \Big]\\
		&=\sum_{\substack{\ell_1,\ldots,\ell_n\geq 0\\\ell_1+\cdots+\ell_n=\ell}}\frac{\ell!}{\ell_1!\cdots \ell_n!}a_{11}^{\ell_1}\cdots a_{nn}^{\ell_n}\E[Y_{11}^{2\ell_1}\cdots Y_{1n}^{2\ell_n}]\\
		&=\sum_{r=1}^\ell\sum_{\substack{\ell_1,\ldots,\ell_r\geq 1\\\ell_1+\cdots+\ell_r=\ell}}\frac{\ell!}{\ell_1!\cdots \ell_r!\, {r!}}  \l( \frac{1}{n^r}  \sum_{1\le i_1\ne\cdots\ne i_r\le n}a_{i_1i_1}^{\ell_1}\cdots a_{i_ri_r}^{\ell_r}\r) n^r\beta_{2\ell_1,\cdots,2\ell_r}.
	\end{align*}
	In view of $M:=\limsup_{n\rw\infty}\max_{1\leq i\leq n}|a_{ii}|<\infty$, we deduce that
	\begin{align*}
		\frac{1}{n^r}\sum_{i_1\ne\cdots\ne i_r}a_{i_1i_1}^{\ell_1}\cdots a_{i_ri_r}^{\ell_r}=\frac{1}{n^r}\prod_{s=1}^r{\sum_{i=1}^n a_{ii}^{\ell_s}}+O\l(\frac{1}{n}\r), \qquad \nto\,.
	\end{align*}
	Now using $n^{-1}\sum_{i=1}^n\delta_{a_{ii}}\cw\nu$ and the boundedness of $|a_{ii}|$'s, we obtain
	\begin{align*}
		\lim_{n\rw\infty}\frac{1}{n^r}\prod_{s=1}^r{\sum_{i=1}^n a_{ii}^{\ell_s}}=\prod_{s=1}^r\int_\R x^{\ell_s}\nu(dx).
	\end{align*}
	In conjunction with \eqref{moment}, we conclude that $\lim_{n\rw\infty}\E[Q_{n,1}^\ell]=m_{\ell}$, where
	\begin{align}\label{eq:dgseree}
		m_\ell&=\sum_{r=1}^\ell\sum_{\substack{\ell_1,\cdots,\ell_r\geq 1\\\ell_1+\cdots+\ell_r=\ell}}\frac{\ell!}{\ell_1!\cdots \ell_r!r!}\l(\prod_{s=1}^r\int_\R x^{\ell_s}\nu(dx)\r)\l(\frac{\alpha}{2}\r)^{r-1}\frac{\Gamma(r)\prod_{s=1}^r\Gamma(\ell_s-\frac{\alpha}{2})}{(\Gamma(1-\frac{\alpha}{2}))^r\Gamma(\ell)}.
	\end{align}
	By Fatou's lemma,
	\bea\nonumber
	|m_\ell|\leq \limsup_{n\rw\infty} \E[|Q_{n,1}|^\ell]\leq \E[\limsup_{n\rw\infty}|Q_{n,1}|^\ell]\leq \E\l[(M\sum_{i=1}^nY_{1i}^2)^\ell\r]=M^\ell.
	\eea
	Hence, by Carleman's condition, the moment sequence $\{m_\ell\}_{\ell=1}^\infty$ uniquely determines a probability measure $\mu_{\nu, \alpha}$, and $Q_{n,1}\cid\mu_{\nu, \alpha}$, as $\nto$. We set $$D_1=\{\zeta\in \C\backslash\operatorname{supp}(\nu):\, |\zeta|>M\} \quad \text{ and } \quad
	D_2=\{\zeta\in \C\backslash\operatorname{supp}(\nu):\, |\zeta|<M^{-1}\},
	$$
	and we define
	$$
	k_{r,\ell}:=\sum_{\substack{\ell_1,\ldots,\ell_r\geq 1\\ \ell_1+\cdots+\ell_r=\ell}}\prod_{s=1}^r\frac{\Gamma(\ell_s-\frac{\alpha}{2})\int_\R x^{\ell_s}\nu(dx)}{\Gamma(1-\frac{\alpha}{2})(\ell_s!)}.
	$$
	Observe that $k_{r,\ell}$ is the coefficient of $z^\ell$ in the formal power series $\left[K_x(z)\right]^r$, where
	$$
	K_x(z):=\sum_{j=1}^\infty \frac{\Gamma\left(j-\frac{\alpha}{2}\right)\int_\R x^{j}\nu(dx)}{\Gamma\left(1-\frac{\alpha}{2}\right)(j!)}z^j\,, \qquad z\in D_2\,.
	$$
	From \eqref{eq:dgseree} and the definition of $k_{r,\ell}$ we get that 
	\begin{align*}
		\frac{m_\ell }{\ell} &=\sum_{r=1}^{\ell} \frac{1}{r}\l(\frac{\alpha}{2}\r)^{r-1}\sum_{\substack{\ell_1,\ldots,\ell_r\geq 1\\ \ell_1+\cdots+\ell_r=\ell}}\prod_{s=1}^r\frac{\Gamma(\ell_s-\frac{\alpha}{2})\int_\R x^{\ell_s}\nu(dx)}{\Gamma(1-\frac{\alpha}{2})(\ell_s!)}
		= \sum_{r=1}^{\ell} \frac{1}{r}\l(\frac{\alpha}{2}\r)^{r-1} k_{r,\ell}
	\end{align*}
	is the coefficient of $z^\ell$ in the formal power series $H(z)$, where
	$$
	H_x(z):=\sum_{r=1}^\infty \frac{1}{r}\left(\frac{\alpha}{2}\right)^{r-1}\left[K_x(z)\right]^r=-\frac{2}{\alpha}\log\left(1-\frac{\alpha}{2}K_x(z)\right)\,.
	$$
	The last step follows from the identity $- \log(1-z)= \sum_{n=1}^\infty z^n/n$, which is valid for $|z|\le 1, z\neq 1$. Using the identity
	$$(1-xz)^{\alpha/2}=\sum_{j=0}^\infty (-xz)^j \prod_{k=1}^j \frac{\alpha/2 -k+1}{k}=1-\frac{\alpha}{2} \sum_{j=1}^\infty (xz)^j \frac{\Gamma\left(j-\frac{\alpha}{2}\right)}{\Gamma\left(1-\frac{\alpha}{2}\right)(j!)}$$
	and the fact that $\nu$ is a probability measure, we conclude that
	\begin{align*}
		\sum_{\ell=1}^\infty \frac{m_\ell z^\ell}{\ell} &=H_x(z)
		=-\frac{2}{\alpha}\log\left(1-\frac{\alpha}{2}K_x(z)\right)\\
		&=-\frac{2}{\alpha}\log\l(\int_\R \l(1-xz\r)^{\frac{\alpha}{2}}\nu(dx)\r)\,,\qquad z\in D_2\,.
	\end{align*}
	Taking derivatives with respect to $z$ on both sides, we obtain
	\bea\nonumber
	\sum_{\ell=1}^\infty m_\ell \,z^{\ell-1}=\frac{\int_\R \l(1-xz\r)^{\frac{\alpha}{2}-1}x\nu(dx)}{\int_\R \l(1-xz\r)^{\frac{\alpha}{2}}\nu(dx)},\qquad z\in D_2,
	\eea
	and 
	\bea\nonumber
	\sum_{\ell=1}^\infty m_{\ell} \,z^{-\ell+1}=\frac{\int_\R \l(1-\frac{x}{z}\r)^{\frac{\alpha}{2}-1}x\nu(dx)}{\int_\R \l(1-\frac{x}{z}\r)^{\frac{\alpha}{2}}\nu(dx)},\qquad z\in D_1.
	\eea
	Using the relationship $s_{\mu_{\nu,\alpha}}(z)=-z^{-1}-\sum_{\ell=1}^\infty m_\ell \,z^{-\ell -1}$ between the Stieltjes transform and the moments of $\mu_{\nu,\alpha}$, we get
	\begin{align*}
		s_{\mu_{\nu,\alpha}}(z)&=-\frac{1}{z}-\frac{1}{z^2}\frac{\int_\R \l(1-\frac{x}{z}\r)^{\frac{\alpha}{2}-1}x\nu(dx)}{\int_\R \l(1-\frac{x}{z}\r)^{\frac{\alpha}{2}}\nu(dx)}\\
		&=-\frac{\int_{\R}(z-x)^{\frac{\alpha}{2}-1}\nu(dx)}{\int_{\R}(z-x)^{\frac{\alpha}{2}}\nu(dx)},\quad z\in D_1.
	\end{align*}
	Since $s_{\mu_{\nu,\alpha}}(z)$ is analytic in $\C\backslash\operatorname{supp}(\nu)$, the proof in the case where $\{\A_n\}_{n=1}^\infty$ are deterministic.
	
	Finally, if $\A_n$ is random, it is independent of $\y_1$ by assumption. Therefore, the proof works analogously by conditioning on $\{\A_n\}_{n=1}^\infty$.
\end{proof}

\begin{remark} (a) The denominator of \eqref{Eq-Stieltjes-transform-Quadratic-form} is non-zero. To see this, take $z\in \C^+$ for instance. Then one has for $x\in \R$ that $\arg(z-x)\in(0,\pi)$. Since $\alpha\in(0,2)$,
	\bea\nonumber
	\Im \l((z-x)^{\frac{\alpha}{2}}\r)=|z-x|^{\frac{\alpha}{2}}\sin\l(\frac{\alpha}{2}\arg(z-x)\r)>0
	\eea
	and therefore, $\Im\int_{\R}(z-x)^{\frac{\alpha}{2}}\nu(dx)>0$. The other cases are similar.\\
	(b) The assumption that the probability measure $v$ is deterministic is crucial. To this end, consider the setting where $\A_n=Z \, \bfI_n$ with a non-degenerate random variable $Z$. In this case, we have $Q_{n,1}=\y_1^\top \diag(\A_n) \y_1=Z$ and $\frac{1}{n}\sum_{i=1}^n\delta_{a_{ii}^{(n)}}=\delta_Z$.
\end{remark}
A combination of Theorem~\ref{Prop-quadratic-form} and Lemma~\ref{Prop-quadratic-form-Qn2} immediately yields the following result.
\begin{corollary}\label{corr:mainthm}
	Under the conditions of Theorem~\ref{Prop-quadratic-form} and  Assumption~\ref{ass:offdiag}, then, as $\nto$, $Q_{n}$ converges in distribution to a random variable with induced probability measure $\mu_{\nu,\alpha}$, whose Stieltjes transform is given by \eqref{Eq-Stieltjes-transform-Quadratic-form}.
\end{corollary}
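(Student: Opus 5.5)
The plan is to write $Q_n=Q_{n,1}+Q_{n,2}$ as in \eqref{Def-Qn1Qn2} and treat the two pieces separately. Slutsky's lemma then combines them.

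\emph{Diagonal part.} The hypotheses of Theorem~\ref{Prop-quadratic-form} are assumed: the diagonal entries are almost surely uniformly bounded in the limit, and the empirical diagonal distribution converges weakly almost surely to the deterministic $\nu$. Theorem~\ref{Prop-quadratic-form} therefore applies directly and gives $Q_{n,1}\cid\mu_{\nu,\alpha}$, with Stieltjes transform \eqref{Eq-Stieltjes-transform-Quadratic-form}. When $\A_n$ is random, that theorem already covers the case by conditioning on $\{\A_n\}$, which is independent of $\y_1$. Note also that $Q_{n,1}$ depends only on $\diag(\A_n)$, so the off-diagonal structure plays no role at this step.

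\emph{Off-diagonal part.} Assumption~\ref{ass:offdiag} is exactly the hypothesis of Lemma~\ref{Prop-quadratic-form-Qn2}. The lemma therefore yields $Q_{n,2}\cip 0$. It uses only the symmetry of the $Y_{1i}$, the independence of $\A_n$ and $\y_1$, and the bound $n^2\beta_{2,2}\le 1$.

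\emph{Combination.} Since $Q_{n,1}\cid\mu_{\nu,\alpha}$ and $Q_{n,2}\cip0$, Slutsky's theorem gives $Q_n=Q_{n,1}+Q_{n,2}\cid\mu_{\nu,\alpha}$, which is the claim. We never need joint convergence, because the limit of $Q_{n,2}$ is a constant.

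The only point needing care, and hence the main (mild) obstacle, is that the conditions are of different types. Theorem~\ref{Prop-quadratic-form} is an almost-sure statement conditional on $\{\A_n\}$, whereas Assumption~\ref{ass:offdiag} is an expectation condition. To reconcile them, I would first pass from the conditional convergence to unconditional convergence. For every bounded continuous $f$ one has $\E[f(Q_{n,1})\mid \{\A_n\}]\to\int f\,d\mu_{\nu,\alpha}$ almost surely, and dominated convergence then gives $\E f(Q_{n,1})\to\int f\,d\mu_{\nu,\alpha}$. After this step the unconditional Slutsky argument goes through as described.
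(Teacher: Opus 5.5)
Your proposal is correct and is exactly the paper's argument. The paper states the corollary as an immediate combination of Theorem~\ref{Prop-quadratic-form}, which gives $Q_{n,1}\cid\mu_{\nu,\alpha}$, and Lemma~\ref{Prop-quadratic-form-Qn2}, which gives $Q_{n,2}\cip 0$, joined via Slutsky's theorem; your extra step of integrating out the conditional convergence by dominated convergence correctly makes explicit the random-$\A_n$ case that the paper handles only by a brief conditioning remark.
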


Next, we study the weak limits of $\mu_{\nu,\alpha}$ as $\alpha$ tends to two or zero, respectively. 
\begin{proposition}\label{prop:munualpha_examples}
	Consider the probability measure $\mu_{\nu,\alpha}$ with Stieltjes transform $s_{\nu,\alpha}$ in \eqref{Eq-Stieltjes-transform-Quadratic-form}, where $\nu$ is a probability measure on $\R$ and $\alpha\in (0,2)$. Then the following statements hold.
	\begin{itemize}
		\item[$(i)$] As $\alpha\uparrow 2$, $\mu_{\nu,\alpha}\cw\delta_{\int x\nu(dx)}$.
		\item[$(ii)$] As $\alpha\downarrow 0$, $\mu_{\nu,\alpha}\cw\nu$.
	\end{itemize}
\end{proposition}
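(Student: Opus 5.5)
The plan is to work directly with the Stieltjes transform formula \eqref{Eq-Stieltjes-transform-Quadratic-form} and to use the fact that pointwise convergence of Stieltjes transforms on $\C^+$ to the Stieltjes transform of a probability measure implies weak convergence. Note that $\mu_{\nu,\alpha}$ is defined for general $\nu$ through \eqref{Eq-Stieltjes-transform-Quadratic-form}. For fixed $z\in\C^+$, write
\[
N_\alpha(z)=\int_\R (z-x)^{\frac{\alpha}{2}-1}\nu(dx),\qquad D_\alpha(z)=\int_\R (z-x)^{\frac{\alpha}{2}}\nu(dx),
\]
so that $s_{\mu_{\nu,\alpha}}(z)=-N_\alpha(z)/D_\alpha(z)$. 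For $z\in\C^+$ we have $\arg(z-x)\in(0,\pi)$ and $|z-x|\ge \Im z>0$, so the integrands depend continuously on $\alpha\in[0,2]$ pointwise in $x$ (principal branch).

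For (ii), as $\alpha\downarrow0$ we have $(z-x)^{\alpha/2}\to1$ and $(z-x)^{\alpha/2-1}\to (z-x)^{-1}$ for every $x$. For dominated convergence we bound $|(z-x)^{\alpha/2}|=|z-x|^{\alpha/2}\le 1+|z-x|$ on $\alpha\in(0,1]$. This is not integrable when $\nu$ lacks a first moment, so the denominator needs more care. I would use the bound $|(z-x)^{\alpha/2}-1|\le C\min(\alpha\log^+|z-x|+\alpha,\ |z-x|^{\alpha/2}+1)$ and split the integral over $|x|\le R$ and $|x|>R$. The tail contributes at most $\nu(|x|>R)$ times something bounded by $2R'^{\alpha/2}$ on the truncated region. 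To handle the tail cleanly I would instead divide numerator and denominator by a common factor. Alternatively, and more simply, if $\nu$ has unbounded support one can argue with the bounded-diagonal restriction of the theorem's setting; but to stay general I would show $D_\alpha\to1$ via
\[
\Big|\int (z-x)^{\alpha/2}\nu(dx)-1\Big|\le \sup_{|x|\le R}|(z-x)^{\alpha/2}-1|+\int_{|x|>R}\big(|z-x|^{\alpha/2}+1\big)\nu(dx).
\]
The first term tends to $0$. The second tends to $\nu(|x|>R)\cdot 2$ as $\alpha\to0$ by dominated convergence, since $|z-x|^{\alpha/2}\le 1+|z-x|^{\alpha_0/2}$ is dominated once $\int|x|^{\alpha_0/2}d\nu<\infty$. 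This is the main obstacle in general: for $\nu$ with extremely heavy tails, $D_\alpha$ may not converge. I would therefore first treat compactly supported $\nu$ (the setting of Theorem~\ref{Prop-quadratic-form}) and note that the argument extends whenever $\nu$ has some small positive moment. For the numerator, $|(z-x)^{\alpha/2-1}|\le |z-x|^{-1}\max(1,|z-x|^{\alpha/2})$ is handled similarly, and bounded convergence gives $N_\alpha\to s_\nu(z)$ up to sign, namely $N_0(z)=\int (z-x)^{-1}\nu(dx)=-s_\nu(z)$. Hence $s_{\mu_{\nu,\alpha}}(z)\to s_\nu(z)$ for every $z\in\C^+$, and weak convergence follows since $\nu$ is a probability measure.

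For (i), as $\alpha\uparrow2$ we have $(z-x)^{\alpha/2-1}\to1$, so $N_\alpha(z)\to1$, and $(z-x)^{\alpha/2}\to z-x$, so $D_\alpha(z)\to z-\int x\,\nu(dx)$. The domination requires $\int|x|\nu(dx)<\infty$, which is implicit in the statement since $\int x\,\nu(dx)$ must exist; for $\alpha\in[1,2)$ we use the bounds $|z-x|^{\alpha/2}\le 1+|z-x|$ and $|z-x|^{\alpha/2-1}\le (\Im z)^{-1}+1$. Thus
\[
s_{\mu_{\nu,\alpha}}(z)\to -\frac{1}{z-\int x\nu(dx)}=\frac{1}{\int x\nu(dx)-z},
\]
which is the Stieltjes transform of $\delta_{\int x\nu(dx)}$. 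The limit is nonzero because $\Im\big(z-\int x\,\nu(dx)\big)=\Im z>0$.

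In both cases the limiting function is the Stieltjes transform of a genuine probability measure, so no mass escapes and the convergence criterion recalled in the Notations yields $\mu_{\nu,\alpha}\cw$ the stated limits. As a cross-check in the compactly supported case, the moment formula of Theorem~\ref{Prop-quadratic-form} gives the same result. As $\alpha\to0$ only the $r=1$ term survives, and it equals $\int x^\ell\nu(dx)$. As $\alpha\to2$, $\Gamma(\ell_s-\alpha/2)/\Gamma(1-\alpha/2)\to0$ unless $\ell_s=1$, which leaves $\ell!/\ell\cdot\frac{1}{(\ell-1)!}(\int x\,d\nu)^\ell=(\int x\,d\nu)^\ell$.
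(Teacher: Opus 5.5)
Your proposal is correct and follows the same route as the paper: pointwise convergence of the Stieltjes transform \eqref{Eq-Stieltjes-transform-Quadratic-form} to $s_{\delta_{\int x\nu(dx)}}(z)$ as $\alpha\uparrow 2$, and to $s_\nu(z)$ as $\alpha\downarrow 0$. The paper only asserts these limits. You additionally supply the dominated-convergence justification: the numerator is bounded by $\max(1,(\Im z)^{-1})$, and the denominator needs $\int|x|\,\nu(dx)<\infty$ for (i), resp.\ $\int|x|^{\alpha_0/2}\nu(dx)<\infty$ for some $\alpha_0>0$ for (ii). These moment conditions are exactly what is implicitly required for $\int x\,\nu(dx)$, resp.\ $\mu_{\nu,\alpha}$ for small $\alpha$, to be defined at all.
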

\begin{proof}
	For $z\in\C\backslash\operatorname{supp}(\nu)$, we have
	\bea\nonumber
	s_{\mu_{\nu,\alpha}}(z)\rw-\frac{1}{\int_\R(z-x)\nu(dx)}=\frac{1}{\int_\R x\nu(dx)-z}=s_{\delta_{\int_\R x\nu(dx)}}(z)\,, \qquad \alpha\uparrow2\,,
	\eea
	and
	\bea\nonumber
	s_{\mu_{\nu,\alpha}}(z)\rw-\int_{\R}\frac{\nu(dx)}{z-x}=s_\nu(z)\,, \qquad \alpha\downarrow 0\,.
	\eea   
\end{proof}
It is worth mentioning that the boundary cases $\alpha=2$ and $\alpha=0$ play special roles and have nice interpretations. First, note that $\alpha=2$ constitutes the boundary of finite and infinite $\E[\xi^2]$. More precisely, for $\alpha>2$ the second moment $\E[\xi^2]$ is finite, whereas it is infinity for $\alpha\in(0,2)$. At $\alpha=2$, $\E[\xi^2]$ can be finite or infinite depending on the slowly varying function $L$ in\eqref{eq:regvar}.

Proposition~\ref{prop:munualpha_examples} $(i)$ shows that as $\alpha$ approaches $2$, the limit of $\mu_{\nu,\alpha}$ becomes degenerate and $\nu$ only influences the limit through its mean. This would suggest that  
\begin{equation}\label{eq:efsesdgfdsf}
	Q_{n,1} \cip  \int_\R x\nu(dx)\,, \qquad \nto\,.
\end{equation}
It turns out that this heuristic argument can be rigorously formulated as follows. 
\begin{proposition}\label{lem:1jh}
	If $M:=\limsup_{n\rw\infty}\max_{1\leq i\leq n}|a_{ii}^{(n)}|<\infty$ almost surely, $\frac{1}{n}\sum_{i=1}^n\delta_{a_{ii}^{(n)}}$ converges weakly almost surely to a deterministic probability measure $\nu$ and $\xi$ is the domain of attraction of the normal distribution, then \eqref{eq:efsesdgfdsf} holds.
\end{proposition}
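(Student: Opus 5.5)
We show that $Q_{n,1}-\frac1n\sum_{i=1}^n a_{ii}\cip 0$. Since $\frac1n\sum_i a_{ii}=\int x\,\nu_n(dx)\to\int x\,\nu(dx)$ almost surely, this gives \eqref{eq:efsesdgfdsf}. Here $\nu_n$ denotes the diagonal empirical measure; weak convergence plus the uniform bound $M$ makes $x\mapsto x$ effectively bounded and continuous on the supports. As in the proof of Theorem~\ref{Prop-quadratic-form}, we first take $\A_n$ deterministic and then condition on $\{\A_n\}$, using independence.

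Write $D_n:=\sum_{j=1}^n X_{1j}^2$, so that $Q_{n,1}=\sum_j a_{jj}X_{1j}^2/D_n$. The key input is the classical characterization of the domain of attraction of the normal law: $\xi$ is in this domain if and only if the truncated second moment $\mu(x)=\E[\xi^2\1_{|\xi|\le x}]$ is slowly varying. This is equivalent to the self-normalized negligibility statement
\[
\frac{\max_{1\le j\le n}X_{1j}^2}{D_n}\cip 0 ,
\]
and hence, by Lemma~3.2 of Gin\'e--G\"otze--Mason or the O'Brien criterion, to $\sum_j Y_{1j}^4=\sum_j X_{1j}^4/D_n^2\cip 0$. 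This is the only place where the hypothesis on $\xi$ enters.

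We then compute the conditional second moment. Because the $Y_{1j}^2$ are exchangeable and sum to $1$, we have $\E[Y_{1j}^2]=1/n$. This gives $\E[Q_{n,1}]=\bar a_n:=\frac1n\sum_j a_{jj}$ and
\[
\E\big[(Q_{n,1}-\bar a_n)^2\big]=\sum_{i,j}(a_{ii}-\bar a_n)(a_{jj}-\bar a_n)\E[Y_{1i}^2Y_{1j}^2].
\]
Using exchangeability again, set $\beta_4=\E[Y_{11}^4]$ and $\beta_{2,2}=\E[Y_{11}^2Y_{12}^2]$. Since $\sum_j(a_{jj}-\bar a_n)=0$, the off-diagonal sum equals $-\beta_{2,2}\sum_j(a_{jj}-\bar a_n)^2$. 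Therefore
\[
\E\big[(Q_{n,1}-\bar a_n)^2\big]=(\beta_4-\beta_{2,2})\sum_j(a_{jj}-\bar a_n)^2\le 4M^2\, n\beta_4 .
\]

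It remains to show $n\beta_4=\E\big[\sum_jY_{1j}^4\big]\to0$. This follows from $\sum_jY_{1j}^4\cip 0$ and $0\le\sum_j Y_{1j}^4\le1$ by bounded convergence. Chebyshev's inequality then completes the argument.

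The main obstacle is justifying $\max_j X_{1j}^2/D_n\cip0$ from the domain-of-attraction hypothesis, especially in the infinite-variance case with slowly varying $\mu$. I would prove it directly. Choose $b_n$ with $n\mu(b_n)/b_n^2\to1$. Slow variation of $\mu$ gives $n\P(|\xi|>\varepsilon b_n)\to0$ and $D_n/b_n^2\cip1$, the latter by a truncated weak law of large numbers. Both limits follow from Karamata-type estimates $x^2\P(|\xi|>x)/\mu(x)\to0$. Combining them gives the negligibility of the maximum.
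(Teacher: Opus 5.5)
Your proposal is correct and follows the paper's route. Both arguments bound the conditional second moment of $Q_{n,1}-\frac1n\tr(\A_n)$ by a constant times $M^2 n\beta_4$ and finish with Chebyshev's inequality; your exact identity $(\beta_4-\beta_{2,2})\sum_j(a_{jj}-\bar a_n)^2$, obtained from $\sum_jY_{1j}^2=1$, is a tidier version of the paper's expansion via $n\beta_4+n(n-1)\beta_{2,2}=1$.

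The one real difference is how $n\beta_4\to0$ is obtained. The paper cites \cite[Theorem~2.4]{doernemann:heiny:2025}, whereas you derive it from $\max_jY_{1j}^2\cip0$ and bounded convergence, which makes the argument self-contained. The intermediate step to $\sum_jY_{1j}^4\cip0$ needs no Gin\'e--G\"otze--Mason or O'Brien citation, since $\sum_jY_{1j}^4\le\max_jY_{1j}^2$.
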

\begin{proof}
	Since $\E[Q_{n,1}|\A_n]=\frac{1}{n} \tr(\A_n) \to \int_\R x\nu(dx)$ by assumption, it follows that $\E[Q_{n,1}|\A_n]-\E[Q_{n,1}]\cip 0$, as $\nto$. Thus, by Slutsky's theorem, it suffices to prove
	\begin{equation}\label{eq:dgfd}
		Q_{n,1}-\E[Q_{n,1}|\A_n] \cip 0\,, \qquad \nto\,.
	\end{equation}
	We know from \cite[Theorem~2.4]{doernemann:heiny:2025} that $n\beta_4\to 0$ is equivalent to $\xi$ being in the domain of attraction of the normal distribution.
	Using $M<\infty$ and $n\beta_4 +n(n-1)\beta_{2,2}=1$, we get
	\begin{align*}
		\Var\big( Q_{n,1}-\E[Q_{n,1}|\A_n] \big) &=\sum_{i=1}^n \E[a_{ii}^2] (\beta_4-n^{-2}) + \sum_{1\le i\neq j\le n} \E[a_{ii}a_{jj}] (\beta_{2,2}-n^{-2})\\
		&\lesssim M^2 (n\beta_4 +n^{-1}) + M^2 n^{-1}\to 0,\qquad \nto\,,
	\end{align*}
	establishing the desired result \eqref{eq:dgfd} in view of Markov's inequality.
\end{proof}
\begin{remark}
	We remark that if $\xi$ satisfies the regular variation condition \eqref{eq:regvar} for some $\alpha>0$ and is in the domain of attraction of normal distribution, then $\alpha$ must necessarily be at least two. Thus, Proposition~\ref{lem:1jh} nicely complements Theorem~\ref{Prop-quadratic-form}. For sub-Gaussian $\xi$, Theorem~\ref{thm:HW-Qy} provides a stronger concentration result than \eqref{eq:efsesdgfdsf} for $Q_n$, which might be of independent interest.   
\end{remark}
While Proposition~\ref{prop:munualpha_examples} $(i)$ and Proposition \ref{lem:1jh} reveal that $Q_{n,1}$ becomes degenerate if $\xi$ is light-tailed, Proposition~\ref{prop:munualpha_examples} $(ii)$ sheds light on the extremely heavy-tailed case $\alpha \downarrow 0$. 
The weak limits in Proposition~\ref{prop:munualpha_examples} $(i)$ and $(ii)$ coincide if and only if $\nu$ is degenerate, that is, $\nu=\delta_b$ for some $b\in \R$. In this case, we have for all $\alpha\in (0,2)$ that $\mu_{\delta_b,\alpha}=\delta_b$ since
\bea\nonumber
s_{\mu_{\delta_b,\alpha}}(z)=-\frac{(z-b)^{\frac{\alpha}{2}-1}}{(z-b)^{\frac{\alpha}{2}}}=\frac{1}{b-z}=s_{\delta_b}(z).
\eea
In particular, if $\A_n=b \, \bfI_n$, then $Q_{n,1}=\y_1^\top \diag(\A_n) \y_1=b$ holds for any $n\in \N$.
\medskip

Our next goal is to derive the density of $\mu_{\nu,\alpha}$ for non-degenerate $\nu$. 
To this end, we introduce the non-tangential limit as in Section 3.1 of \cite{mingo2017free}. Let $a\in\R$ and suppose $f:\C^+\rw\C$. We say $\lim _{\sphericalangle z \rightarrow a} f(z)=b$ if for every $\theta>0$, $\lim_{z\rw a}f(z)=b$ when we restrict $z$ to be in the cone
$$
\{u+iv:\ v>0,\ |u-a|<\theta v\}\subset \C^+.
$$

\begin{theorem}\label{thm::quadratic-from-property}
	Under the assumptions of Theorem \ref{Prop-quadratic-form} and if $\nu$ is non-degenerate, the probabilty measure $\mu_{\nu,\alpha}$ with Stieltjes transform \eqref{Eq-Stieltjes-transform-Quadratic-form} has no atom and is absolutely continuous with a density function given by
	{\footnotesize\bea\label{Eq-Density-of-Quadratic-Form}
		f_{\nu,\alpha}(x)=\frac{\sin\l(\frac{\alpha\pi}{2}\r)}{\pi}\frac{\int_\R(x-y)_+^{\frac{\alpha}{2}}\nu(dy)\cdot\int_\R(x-y)_-^{\frac{\alpha}{2}-1}\nu(dy)+\int_\R(x-y)_-^{\frac{\alpha}{2}}\nu(dy)\cdot\int_\R(x-y)_+^{\frac{\alpha}{2}-1}\nu(dy)}{(\int_\R(x-y)_+^{\frac{\alpha}{2}}\nu(dy))^2+(\int_\R(x-y)_-^{\frac{\alpha}{2}}\nu(dy))^2+2\cos\l(\frac{\alpha\pi}{2}\r)\int_\R(x-y)_+^{\frac{\alpha}{2}}\nu(dy)\int_\R(x-y)_-^{\frac{\alpha}{2}}\nu(dy)}.
		\eea}
	The density $f_{\nu,\alpha}$ is finite $\lambda$-a.e.\ on $\R$, strictly positive in $K_\nu= (\inf\operatorname{supp}(\nu),\sup\operatorname{supp}(\nu))$, and is zero outside $K_\nu$.
\end{theorem}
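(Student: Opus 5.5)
We will recover $\mu_{\nu,\alpha}$ from the boundary values of its Stieltjes transform via the Stieltjes inversion formula. Write $z=x+i\eta$ with $\eta\downarrow0$. For $y<x$ we have $(z-y)^{\beta}\to (x-y)^{\beta}$, while for $y>x$ the argument of $z-y$ tends to $\pi$, so $(z-y)^{\beta}\to (y-x)^{\beta}e^{i\pi\beta}$. Set
\[
P_\beta(x)=\int_\R (x-y)_+^{\beta}\,\nu(dy),\qquad N_\beta(x)=\int_\R (x-y)_-^{\beta}\,\nu(dy).
\]
For $\beta=\alpha/2$ these are finite, since $\nu$ is compactly supported in $[-M,M]$. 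For $\beta=\alpha/2-1$ they lie in $[0,\infty]$.

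The first step is to justify passing to the limit. For $\beta=\alpha/2>0$, dominated convergence applies directly. For $\beta=\alpha/2-1\in(-1,0)$ we use $|z-y|^{\beta}\le|x-y|^{\beta}$, so dominated convergence works whenever $P_{\alpha/2-1}(x)$ and $N_{\alpha/2-1}(x)$ are finite. This holds for $\lambda$-a.e.\ $x$ by Fubini, because $\int_{-R}^{R}\int |x-y|^{\alpha/2-1}\nu(dy)\,dx<\infty$. Conditionally on this, we obtain
\[
\lim_{\eta\downarrow0}s(x+i\eta)=-\frac{P_{\alpha/2-1}+e^{i\pi(\alpha/2-1)}N_{\alpha/2-1}}{P_{\alpha/2}+e^{i\pi\alpha/2}N_{\alpha/2}}.
\]
Multiplying numerator and denominator by the conjugate of the denominator and taking imaginary parts gives $\pi^{-1}\Im$ equal to the right-hand side of \eqref{Eq-Density-of-Quadratic-Form}. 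Here we use $e^{i\pi(\alpha/2-1)}=-e^{i\pi\alpha/2}$, and $|P+e^{i\pi\alpha/2}N|^2=P^2+N^2+2\cos(\alpha\pi/2)PN$. The denominator vanishes only if $P_{\alpha/2}=N_{\alpha/2}=0$, which is impossible since $\nu$ is a probability measure and $x$ cannot be an atom carrying all mass when $\nu$ is non-degenerate. More precisely, $P_{\alpha/2}+N_{\alpha/2}>0$, and $P^2+N^2+2\cos(\cdot)PN\ge(1-|\cos|)(P^2+N^2)>0$. Hence the boundary value exists and is finite a.e.

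The second step, which we expect to be the main obstacle, is to show that $\mu_{\nu,\alpha}$ has no singular part and in particular no atoms. Inversion gives the density of the absolutely continuous part, and the singular part is supported where $\Im s(x+i\eta)\to\infty$. It therefore suffices to show that $\mu_{\nu,\alpha}(\R)=\int f_{\nu,\alpha}\,dx$, or alternatively to control $\Im s$ uniformly near every point. Our first attempt is a direct bound. Writing $s=-A'/A$ with $A(z)=\int (z-y)^{\alpha/2}\nu(dy)$, we have $|A(z)|\ge c\,(|P|+|N|)$-type lower bounds from the angle separation: $\Im A>0$, and in fact $|A(z)|\gtrsim \int|z-y|^{\alpha/2}\nu(dy)$ because all arguments lie in $[0,\pi\alpha/2]$, a sector of opening less than $\pi$. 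Meanwhile $|A'(z)|\le \frac{\alpha}{2}\int|z-y|^{\alpha/2-1}\nu(dy)$. At a candidate atom $x_0$ we need $\eta\,|s(x_0+i\eta)|\to0$. Splitting $\nu$ into its mass $\nu(\{x_0\})$ and the remainder, the atom contributes $\nu(\{x_0\})\eta^{\alpha/2-1}$ to $|A'|$, which after multiplying by $\eta$ gives $\eta^{\alpha/2}$. The denominator is bounded below by a positive constant coming from the mass of $\nu$ away from $x_0$, which exists because $\nu$ is non-degenerate. The remainder is handled by dominated convergence: $\eta|z-y|^{\alpha/2-1}\le\eta^{\alpha/2}\to0$. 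Thus $\eta\,\Im s\to0$ at every point, so there are no atoms. To exclude a singular continuous part, we will check that the boundary limit of $\Im s$ is finite $\mu$-a.e. Since the singular part lives where that limit is $+\infty$, and that set is contained in $\{x: P_{\alpha/2-1}(x)+N_{\alpha/2-1}(x)=\infty\}$ (because the denominator is bounded below locally uniformly), it suffices to show that this set carries no $\mu$-mass. If this route proves cumbersome, we will fall back on a mass computation, $\int f_{\nu,\alpha}=1$, obtained via a contour or substitution argument.

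The final step concerns the support. For $x>\sup\operatorname{supp}\nu$ we have $N\equiv0$, so the numerator vanishes, and symmetrically below $\inf\operatorname{supp}\nu$; hence $f_{\nu,\alpha}=0$ outside $K_\nu$. For $x\in K_\nu$, non-degeneracy ensures that $\nu$ charges both sides of $x$ in the sense that $P_{\alpha/2},N_{\alpha/2}>0$ and $P_{\alpha/2-1},N_{\alpha/2-1}>0$. Together with $\sin(\alpha\pi/2)>0$, this gives strict positivity.
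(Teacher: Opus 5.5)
Your boundary-value computation and the support/positivity step match the paper. Your atom argument is correct and in fact leaner than the paper's, which goes through non-tangential limits and Lemma~\ref{Lemma-Recover-atom}. Since $|z-y|\ge\eta$, you have $\eta\int|z-y|^{\alpha/2-1}\nu(dy)\le\eta^{\alpha/2}$ uniformly. Your sector bound gives $|\int(z-y)^{\alpha/2}\nu(dy)|\ge\cos(\alpha\pi/4)\int|x_0-y|^{\alpha/2}\nu(dy)>0$, so the atom/remainder splitting is not even needed. (Also $s=-\tfrac{2}{\alpha}A'/A$, a harmless slip.)

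The genuine gap is the exclusion of a singular continuous part, i.e.\ absolute continuity. Saying that the singular part lives on $E=\{x:\int|x-y|^{\alpha/2-1}\nu(dy)=\infty\}$ is no progress by itself. $E$ is Lebesgue-null, but every singular measure is carried by some Lebesgue-null set. Moreover, $E$ need not be small: it contains every atom of $\nu$, and it can be uncountable, e.g.\ the support of a Cantor-type component of $\nu$ of dimension less than $1-\alpha/2$. You give no argument that $\mu_{\nu,\alpha}(E)=0$. The fallback ``$\int f_{\nu,\alpha}=1$ via a contour or substitution argument'' is announced but not carried out. As written, the proposal therefore only identifies the density of the absolutely continuous part of $\mu_{\nu,\alpha}$.

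The paper supplies exactly the missing mass computation. Set $\theta=\alpha\pi/2$, $A=P_{\alpha/2}$, $B=N_{\alpha/2}$ and $F=A+e^{i\theta}B\neq0$. A.e.\ one has $A'=\frac{\alpha}{2}P_{\alpha/2-1}$ and $B'=-\frac{\alpha}{2}N_{\alpha/2-1}$, hence $f_{\nu,\alpha}=-\theta^{-1}\Im(F'/F)=-\theta^{-1}\frac{d}{dx}\arg F(x)$. Since $\arg F(x)\to0$ as $x\to+\infty$ and $\arg F(x)\to\theta$ as $x\to-\infty$, $\int_\R f_{\nu,\alpha}=1$, which leaves no mass for a singular part.

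Alternatively, you can close the gap with estimates you already wrote down. For $z=x+i\eta$, uniformly in $\eta>0$,
\[
|s_{\mu_{\nu,\alpha}}(z)|\le\frac{\int_\R|x-y|^{\frac{\alpha}{2}-1}\nu(dy)}{\cos(\frac{\alpha\pi}{4})\int_\R|x-y|^{\frac{\alpha}{2}}\nu(dy)}.
\]
The numerator is locally integrable in $x$ by your Fubini computation. The denominator is continuous and strictly positive because $\nu$ is non-degenerate. Dominated convergence in the Stieltjes inversion formula then gives $\mu_{\nu,\alpha}((a,b))=\int_a^b f_{\nu,\alpha}(x)\,dx$ for all $a<b$ (using that there are no atoms), which is absolute continuity.
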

\begin{proof}
	From Theorem \ref{Prop-quadratic-form} and Lemma \ref{Lemma-Recover-atom}, we obtain for any $a\in\R$,
	\begin{align}
		\mu_{\nu,\alpha}(\{a\})=\lim_{\sphericalangle z\rw a}(a-z)s_{\mu_{\nu,\alpha}}(z) 
		&=\lim_{\sphericalangle z\rw a}\frac{(z-a)\int_\R(z-x)^{\frac{\alpha}{2}-1}\nu(dx)}{\int_\R(z-x)^{\frac{\alpha}{2}}\nu(dx)} \notag \\
		&=\lim_{\sphericalangle z\rw a}\frac{(z-a)^{\frac{\alpha}{2}}\nu(\{a\})}{\int_\R(z-x)^{\frac{\alpha}{2}}\nu(dx)}. \label{eq:ldef}
	\end{align}
	By definition of the non-tangential limit, for any $a,x\in\R,\ a\ne x$,
	\bea\label{Eq-spherical-limit-of-power}
	\lim_{\sphericalangle z\rw a} (z-x)^{\frac{\alpha}{2}}&=(a-x)_+^{\frac{\alpha}{2}}+e^{\frac{i\alpha\pi}{2}}(a-x)_-^{\frac{\alpha}{2}},\\
	\lim_{\sphericalangle z\rw a} (z-x)^{\frac{\alpha}{2}-1}&=(a-x)_+^{\frac{\alpha}{2}-1}-e^{\frac{i\alpha\pi}{2}}(a-x)_-^{\frac{\alpha}{2}-1},
	\eea
	and $\lim_{\sphericalangle z\rw a} (z-a)^{\frac{\alpha}{2}}=0$.
	Let $\sigma=\nu-\nu(\{a\})\delta_a$, then an application of the dominated convergence theorem gives
	\begin{align*}
		\xi_a:=\lim_{\sphericalangle z\rw a}\int_\R(z-x)^{\frac{\alpha}{2}}\nu(dx)&= \lim_{\sphericalangle z\rw a} \nu(\{a\})(z-a)^{\frac{\alpha}{2}}+\lim_{\sphericalangle z\rw a}\int_\R(z-x)^{\frac{\alpha}{2}}\sigma(dx)\,,\\
		&=\int_\R(a-x)_+^{\frac{\alpha}{2}}\sigma(dx)+e^{\frac{i\alpha\pi}{2}}\int_\R(a-x)_-^{\frac{\alpha}{2}}\sigma(dx)
	\end{align*}
	and 
	\begin{align*}
		\Re(\xi_a) &= \int_\R(a-x)_+^{\frac{\alpha}{2}}\sigma(dx)+\cos\l(\frac{\alpha\pi}{2}\r)\int_\R(a-x)_-^{\frac{\alpha}{2}}\sigma(dx)\\
		\Im (\xi_a) &= \sin\l(\frac{\alpha\pi}{2}\r)\int_\R(a-x)_-^{\frac{\alpha}{2}}\sigma(dx).
	\end{align*}
	Since $\nu$ is non-degenerate, $\sigma$ is non-zero and $\sigma(\{a\})=0$. Then at least one of the integrals
	$$
	\int_\R(a-x)_+^{\frac{\alpha}{2}}\sigma(dx) \quad \text{ and } \quad \int_\R(a-x)_-^{\frac{\alpha}{2}}\sigma(dx)
	$$
	is positive, and as a consequence at least one of $\Re(\xi_a),\ \Im(\xi_a)$ is non-zero. We deduce $\xi_a\ne 0$ and using \eqref{eq:ldef}
	\bea\nonumber
	\mu_{\nu,\alpha}(\{a\})=\frac{\nu(\{a\})}{\xi_a}\lim_{\sphericalangle z\rw a} (z-a)^{\frac{\alpha}{2}}=0.
	\eea
	Hence $\mu_{\nu,\alpha}$ has no atom.
	
	Now we turn to the proof of the density formula for $\mu_{\nu,\alpha}$. By virtue of Lemma~\ref{Lemma-a.e.finite-alphaPotential} with $\beta:=1-\alpha/2$, we have
	$$I_{\beta,\nu}(x):=\int_\R \frac{\nu(dy)}{|x-y|^\beta}<\infty$$
	for all $x\in H$, where $H$ is $\R$ without some set of Lebesgue measure zero. Now for $x\in H$, let $z=x+iv,\ v\in(0,\varepsilon)$. In view of
	\begin{align*}
		&|z-y|^{\frac{\alpha}{2}-1}\leq |x-y|^{\frac{\alpha}{2}-1},\qquad  \int_\R |x-y|^{\frac{\alpha}{2}-1}\nu(dy)=I_{\beta_0,\nu}(x)<\infty,\\
		&|z-y|^{\frac{\alpha}{2}}\leq(|x-y|+\varepsilon)^{\frac{\alpha}{2}},\qquad  \int_\R(|x-y|+\varepsilon)^{\frac{\alpha}{2}}\nu(dy)<\infty,
	\end{align*}
	the dominated convergence theorem and \eqref{Eq-spherical-limit-of-power} imply
	\bea\nonumber
	\lim_{v\downarrow 0}\int_\R (z-y)^{\frac{\alpha}{2}-1}\nu(dy)=\int_\R(x-y)_+^{\frac{\alpha}{2}-1}\nu(dy)-e^{\frac{i\alpha\pi}{2}}\int_\R(x-y)_-^{\frac{\alpha}{2}-1}\nu(dy),
	\eea
	\bea\nonumber
	\lim_{v\downarrow 0}\int_\R (z-y)^{\frac{\alpha}{2}}\nu(dy)=\int_\R(x-y)_+^{\frac{\alpha}{2}}\nu(dy)+e^{\frac{i\alpha\pi}{2}}\int_\R(x-y)_-^{\frac{\alpha}{2}}\nu(dy).
	\eea
	Since $\mu_{\nu,\alpha}$ has no atom at $x$, we can apply Theorem B.8 in \cite{bai:silverstein:2010} and Theorem \ref{Prop-quadratic-form}, to obtain
	\bea\nonumber
	f_{\nu,\alpha}(x)&=\frac{1}{\pi}\lim_{v\downarrow 0}\Im s_{\mu_{\nu,\alpha}}(z)\\
	&=-\frac{1}{\pi}\Im\l(\frac{\int_\R(x-y)_+^{\frac{\alpha}{2}-1}\nu(dy)-e^{\frac{i\alpha\pi}{2}}\int_\R(x-y)_-^{\frac{\alpha}{2}-1}\nu(dy)}{\int_\R(x-y)_+^{\frac{\alpha}{2}}\nu(dy)+e^{\frac{i\alpha\pi}{2}}\int_\R(x-y)_-^{\frac{\alpha}{2}}\nu(dy)}\r).
	\eea
	Direct calculation gives \eqref{Eq-Density-of-Quadratic-Form}. 
	
	Now we prove that $f_{\nu,\alpha}$ integrates to 1, so that $\mu_{\nu,\alpha}$ has no singular components and hence is absolutely continuous. Let $\theta=\alpha\pi/2\in (0,\pi)$. For $x\in\R$, define
	$$
	A(x)=\int_\R (x-y)_+^\frac{\alpha}{2}\nu(dy),\quad B(x)=\int_\R (x-y)_-^{\frac{\alpha}{2}}\nu(dy),\quad F(x)=A(x)+e^{i\theta}B(x).
	$$
	Then $A(x)\geq 0,\ B(x)\geq 0,\ \Im F(x)\geq 0$. Since $\nu$ is non-degenerate, $A(x)+B(x)>0$, hence $F(x)\ne 0$. For almost every $x\in\R$,
	$$
	A'(x)=\frac{\alpha}{2}\int_\R(x-y)_+^{\frac{\alpha}{2}-1}\nu(dy),\quad B'(x)=-\frac{\alpha}{2}\int_\R(x-y)_-^{\frac{\alpha}{2}-1}\nu(dy).
	$$
	Then
	$$
	f_{\nu,\alpha}(x)=\frac{\sin\theta}{\theta}\cdot\frac{A'B-AB'}{A^2+B^2+2AB\cos\theta}=-\frac{\Im(F'\bar{F})}{\theta|F|^2}=-\frac{1}{\theta}\Im\l(\frac{F'}{F}\r)=-\frac{1}{\theta}\frac{d}{dx}\arg F(x).
	$$
	Hence
	$$
	\int_\R f_{\nu,\alpha}(x)dx=-\frac{1}{\theta}\Big(\arg F(+\infty)-\arg F(-\infty)\Big).
	$$
	As $x\rw\infty$, $A(x)\sim x^\frac{\alpha}{2}$, $B(x)\rw 0$, then $\arg F(+\infty)= 0$. When $x\rw-\infty$, $A(x)\rw 0$, $B(x)\sim |x|^{\frac{\alpha}{2}}$, which gives $\arg F(-\infty)=\theta$. Hence $\int _\R f_{\nu,\alpha}(x)dx=1$.
	
	When $x\in K_\nu$, since $\nu$ is non-degenerate, both $\nu((x,+\infty))$ and $\nu((-\infty,x))$ are positive. As a consequence, the four integrals
	$$
	\int_\R(x-y)_+^{\frac{\alpha}{2}}\nu(dy),\ \int_\R(x-y)_-^{\frac{\alpha}{2}-1}\nu(dy),\ \int_\R(x-y)_-^{\frac{\alpha}{2}}\nu(dy),\ \int_\R(x-y)_+^{\frac{\alpha}{2}-1}\nu(dy),
	$$
	are positive and hence $f_{\nu,\alpha}(x)>0$. Finally, when $x\notin K_\nu$, at least one of $\nu((x,+\infty))$ or $\nu((-\infty,x))$ is 0. Then at least one of the following holds:
	$$
	\int_\R(x-y)_+^{\frac{\alpha}{2}}\nu(dy)=\int_\R(x-y)_+^{\frac{\alpha}{2}-1}\nu(dy)=0\quad\text{ or }\quad  \int_\R(x-y)_-^{\frac{\alpha}{2}}\nu(dy)=\int_\R(x-y)_-^{\frac{\alpha}{2}}\nu(dy)=0.
	$$
	This establishes $f_{\nu,\alpha}(x)=0$ and completes the proof.
\end{proof}
\begin{example}\label{ex:bounded}
	We conclude this subsection by giving two explicit examples of $\mu_{\nu,\alpha}$.
	In Section \ref{sec:verification-ex-bounded}, we check that these densities integrate to one and investigate the boundary cases $\alpha \to 0$ or $2$ to confirm Proposition \ref{prop:munualpha_examples} in these examples.
	\begin{enumerate}
		\item[$(i)$] If $\nu=\frac{1}{2}\delta_0+\frac{1}{2}\delta_1$, then
		\bea\nonumber
		f_{\nu,\alpha}(x)=\frac{\sin\l(\frac{\alpha\pi}{2}\r)}{\pi}\frac{(1-x)^{\frac{\alpha}{2}-1}x^{\frac{\alpha}{2}-1}}{x^\alpha+(1-x)^\alpha+2\cos\l(\frac{\alpha\pi}{2}\r)x^{\frac{\alpha}{2}}(1-x)^{\frac{\alpha}{2}}}\1_{(0,1)}(x).
		\eea
		\item[$(ii)$]  If $\nu(dx)=\1_{[0,1]}(x)dx$, then
		\bea\nonumber
		f_{\nu,\alpha}(x)=\frac{\sin\l(\frac{\alpha\pi}{2}\r)}{\pi}\frac{\alpha+2}{\alpha}\frac{(1-x)^{\frac{\alpha}{2}}x^{\frac{\alpha}{2}}}{x^{\alpha+2}+(1-x)^{\alpha+2}+2\cos\l(\frac{\alpha\pi}{2}\r)x^{\frac{\alpha}{2}+1}(1-x)^{\frac{\alpha}{2}+1}}\1_{(0,1)}(x).
		\eea
		
	\end{enumerate}
\end{example}

\subsection{Limiting law: unbounded diagonal case}

In this subsection, we extend Theorem~\ref{Prop-quadratic-form} by deriving the limiting law of $Q_{n,1}$ for sequences of random matrices $\A_n$ whose diagonal entries may be unbounded. The next result states that the conclusion of Theorem~\ref{Prop-quadratic-form} remains valid if the condition $M<\infty$ is replaced by \eqref{eq:unbounded} below. We remark that the condition \eqref{eq:unbounded} is satisfied when $a_{ii}^{(n)}, i\ge 1$, are iid with $\E\big| a_{11}^{(n)}\big|<\infty$.
\begin{theorem}\label{thm:qf-unbounded}
	Let $Q_{n,1}$ be as defined in \eqref{Def-Qn1Qn2} and $\alpha\in (0,2)$. Assume that $\frac{1}{n}\sum_{i=1}^n\delta_{a_{ii}^{(n)}}$ converges weakly almost surely to a deterministic probability measure $\nu$ and that 
	\begin{equation}\label{eq:unbounded}
		\lim_{T\rw\infty}\limsup_{n\rw\infty}\frac{1}{n}\sum_{i=1}^n|a_{ii}^{(n)}|\1(|a_{ii}^{(n)}|>T)=0 \qquad \as
	\end{equation}
	Then $Q_{n,1}$ converges in distribution to a random variable with induced probability measure $\mu_{\nu,\alpha}$ whose Stieltjes transform is given by \eqref{Eq-Stieltjes-transform-Quadratic-form}.
\end{theorem}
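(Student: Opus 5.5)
We prove Theorem~\ref{thm:qf-unbounded} by truncating the diagonal entries and reducing to Theorem~\ref{Prop-quadratic-form}. As in that proof, we condition on $\{\A_n\}$, which is independent of $\y_1$, and treat the $a_{ii}=a_{ii}^{(n)}$ as deterministic sequences satisfying the two almost sure hypotheses.

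For $T>0$ let $\phi_T(x)=\max(-T,\min(x,T))$ be the clipping map. Set $Q_{n,1}^{(T)}=\sum_{i=1}^n \phi_T(a_{ii})Y_{1i}^2$. The truncated diagonal entries are bounded by $T$. Since $\phi_T$ is continuous, $n^{-1}\sum_i\delta_{\phi_T(a_{ii})}\cw \nu_T:=\nu\circ\phi_T^{-1}$ almost surely. Theorem~\ref{Prop-quadratic-form} then gives $Q_{n,1}^{(T)}\cid\mu_{\nu_T,\alpha}$ for each fixed $T$.

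The error term is controlled in $L^1$. Using $\E[Y_{1i}^2]=1/n$,
\[
\E\big|Q_{n,1}-Q_{n,1}^{(T)}\big|\le \sum_{i=1}^n |a_{ii}-\phi_T(a_{ii})|\,\E[Y_{1i}^2]\le \frac1n\sum_{i=1}^n |a_{ii}|\1(|a_{ii}|>T).
\]
By \eqref{eq:unbounded} this bound has $\limsup_n$ tending to $0$ as $T\to\infty$. A standard approximation argument (Billingsley, Theorem~3.2) then yields $Q_{n,1}\cid \mu^*$, where $\mu^*$ is the weak limit of $\mu_{\nu_T,\alpha}$ as $T\to\infty$. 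This requires showing that the limit $\mu^*$ exists and equals $\mu_{\nu,\alpha}$, the measure with Stieltjes transform \eqref{Eq-Stieltjes-transform-Quadratic-form}.

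The main obstacle is this identification. Note first that \eqref{eq:unbounded} together with weak convergence gives $\int|x|\,\nu(dx)<\infty$ by Fatou's lemma. Indeed, $\int |x|\wedge T\,d\nu=\lim_n n^{-1}\sum_i |a_{ii}|\wedge T$, and this is bounded uniformly in $T$. Hence the integrals $\int (z-x)^{\alpha/2}\nu(dx)$ and $\int (z-x)^{\alpha/2-1}\nu(dx)$ are finite for $z\in\C^+$, since $|z-x|^{\alpha/2-1}\le (\Im z)^{\alpha/2-1}$ and $|z-x|^{\alpha/2}\le C(1+|x|)$. So the right-hand side of \eqref{Eq-Stieltjes-transform-Quadratic-form} is well defined.

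Fix $z\in\C^+$. We have
\[
\int (z-x)^{\alpha/2}\nu_T(dx)=\int (z-\phi_T(x))^{\alpha/2}\nu(dx),
\]
and similarly for the exponent $\alpha/2-1$. The integrands converge pointwise as $T\to\infty$ and are dominated by $C(1+|x|)$ and $(\Im z)^{\alpha/2-1}$ respectively. Dominated convergence therefore shows that both numerator and denominator converge. By Remark (a), the limiting denominator has strictly positive imaginary part, so $s_{\mu_{\nu_T,\alpha}}(z)$ converges to the expression in \eqref{Eq-Stieltjes-transform-Quadratic-form}.

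It remains to check tightness of $\{\mu_{\nu_T,\alpha}\}_T$, so that this limit function is the Stieltjes transform of a probability measure. Tightness follows from the first moment: $\int|x|\,\mu_{\nu_T,\alpha}(dx)\le\liminf_n\E|Q^{(T)}_{n,1}|\le \int|\phi_T|\,d\nu\le\int|x|\,d\nu$, again using $\E[Y_{1i}^2]=1/n$ and Fatou's lemma. Alternatively, one can check directly that $iy\,s(iy)\to-1$. The resulting limit measure is $\mu_{\nu,\alpha}$, which completes the argument.
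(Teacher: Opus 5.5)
Your proposal is correct and follows essentially the same route as the paper. Both truncate the diagonal and apply Theorem~\ref{Prop-quadratic-form} for fixed $T$, pass $T\to\infty$ in the numerator and denominator of \eqref{Eq-Stieltjes-transform-Quadratic-form} by dominated convergence, bound $\E|Q_{n,1}-Q_{n,1}^{(T)}|$ by the quantity in \eqref{eq:unbounded}, and conclude with Lemma~\ref{lemma-Billingsley-Thm2.3}. Two of your steps are small improvements on the paper's write-up: the clipping map $\phi_T$ is continuous, whereas the paper's $x\1(|x|\le T)$ tacitly needs $\nu(\{\pm T\})=0$; and your first-moment tightness bound on $\mu_{\nu_T,\alpha}$ shows the limiting Stieltjes transform comes from a probability measure, which the paper takes for granted when it calls pointwise convergence of Stieltjes transforms equivalent to weak convergence to $\mu_{\nu,\alpha}$.
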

\begin{proof}[Proof of Theorem \ref{thm:qf-unbounded}]
	Analogously to the proof of Theorem \ref{Prop-quadratic-form}, we start with the case that $\{\A_n\}_{n=1}^\infty$ are deterministic. For $T>0$, we set $\A_n^{(T)}=\big(a_{ij}^{(n)}\1(|a_{ij}^{(n)}|\leq T)\big)_{i,j=1,\ldots,n}$ and use the decomposition $Q_{n,1}=Q_{n,1}^{(T)}+\wt{Q}_{n,1}^{(T)}
	$, where 
	$$Q_{n,1}^{(T)}=\mf{y}_1^\top \diag(\A_n^{(T)})\mf{y}_1,\quad \wt{Q}_{n,1}^{(T)}=\mf{y}_1^\top \diag(\A_n-\A_n^{(T)})\mf{y}_1\,.$$
	Since $\frac{1}{n}\sum_{i=1}^n\delta_{a_{ii}^{(n)}}\cw\nu$, 
	$$
	\frac{1}{n}\sum_{i=1}^n f(a_{ii}^{(n)})\1(|a_{ii}^{(n)}|\leq T)\rw\int_\R f(x)\1(|x|\leq T)\nu(dx),\quad \text{for any } f\in \ml{C}_b(\R).
	$$
	Then
	\bea\nonumber
	\frac{1}{n}\sum_{i=1}^n f(a_{ii}^{(n)}\1(|a_{ii}^{(n)}|\leq T))&=\frac{1}{n}\sum_{i=1}^n f(a_{ii}^{(n)})\1(|a_{ii}^{(n)}|\leq T)+\frac{1}{n}\sum_{i=1}^n f(0)\1(|a_{ii}^{(n)}|> T)\\ 
	&\rw \int_\R f(x)\1(|x|\leq T)\nu(dx)+f(0) \, \nu \big( x:|x|>T\big)\,.
	\eea
	Therefore for fixed $T>0$,
	$$
	\frac{1}{n}\sum_{i=1}^n \delta_{a_{ii}^{(n)}\1(|x|\leq T)}\cw\nu^{(T)},
	$$
	where $\nu^{(T)}$ is the probability measure such that
	$$
	\int_\R f(x)\nu^{(T)}(dx)=\int_\R f(x)\1(|x|\leq T)\nu(dx)+f(0)\, \nu \big( x:|x|>T\big),\qquad f\in \ml{C}_b(\R).
	$$
	Since $T<\infty$,  an application of Theorem \ref{Prop-quadratic-form} yields that, as $\nto$, the distribution of $Q_{n,1}^{(T)}$ converges to the probability measure $\mu_{\nu^{(T)},\alpha}$ with
	\bea\nonumber
	s_{\mu_{\nu^{(T)},\alpha}}(z)=-\frac{\int_{\R}(z-x)^{\frac{\alpha}{2}-1}\nu^{(T)}(dx)}{\int_{\R}(z-x)^{\frac{\alpha}{2}}\nu^{(T)}(dx)},\qquad z\in\C^+.
	\eea
	Clearly $\nu^{(T)}\cw\nu$ as $T\rw\infty$. 
	Since \eqref{eq:unbounded} implies
	$$\lim_{T\rw\infty}\limsup_{\nto} \frac{1}{n}\sum_{i=1}^n |a_{ii}^{(n)}|^{\frac{\alpha}{2}}\1(|a_{ii}^{(n)}|>T)=0,$$ we obtain by uniform integrability that $\int_\R|x|^{\frac{\alpha}{2}}\nu(dx)<\infty$. Then the dominated convergence theorem shows   
	$$
	\lim_{T\rw\infty}\int_{\R}(z-x)^{\frac{\alpha}{2}}\nu^{(T)}(dx)=\int_{\R}(z-x)^{\frac{\alpha}{2}}\nu(dx)\,,
	$$
	an in view of $|z-x|^{\frac{\alpha}{2}-1}\leq |\Im(z)|^{\frac{\alpha}{2}-1}$, it holds
	$$
	\lim_{T\rw\infty}\int_{\R}(z-x)^{\frac{\alpha}{2}-1}\nu^{(T)}(dx)=\int_{\R}(z-x)^{\frac{\alpha}{2}-1}\nu(dx).
	$$
	The last two displays imply $\lim_{T\to\infty}s_{\mu_{\nu^{(T)},\alpha}}(z)=s_{\mu_{\nu,\alpha}}(z)$ for all $z\in\C^+$, which is equivalent to 
	\begin{equation}\label{Eq-quadratic-form-general-case-cond2}\mu_{\nu^{(T)},\alpha}\cw\mu_{\nu,\alpha}.
	\end{equation}
	We also have
	\bea\nonumber
	\label{Eq-quadratic-form-general-case-cond3}\lim_{T\rw\infty}\limsup_{n\rw\infty}\E[|Q_{n,1}-Q_{n,1}^{(T)}|]\le \lim_{T\rw\infty}\limsup_{n\rw\infty}\frac{1}{n}\sum_{i=1}^n|a_{ii}^{(n)}|\1(|a_{ii}^{(n)}|>T)=0.
	\eea
	In conjunction with \eqref{Eq-quadratic-form-general-case-cond2}and the fact that the law of $Q_{n,1}^{(T)}$ converges weakly to $\mu_{\nu^{(T)}}$ as $\nto$, Lemma \ref{lemma-Billingsley-Thm2.3} completes the proof of convergence in distribution of $Q_{n,1}$.
\end{proof}
Clearly, Corollary~\ref{corr:mainthm} also remains valid if the condition $M<\infty$ is replaced by \eqref{eq:unbounded}. Next, recall that the density formula \eqref{Eq-Density-of-Quadratic-Form} holds for non-degenerate probability measures $\nu$. But when $\nu$ has unbounded support, $\mu_{\nu,\alpha}$ also has unbounded support. A natural question is: what is the tail probability of $\mu_{\nu,\alpha}$? Or equivalently, what is the tail behavior of $f_{\nu,\alpha}$? For illustration, we consider the case that $\nu$ is the exponential or Pareto distribution, respectively. The proof of the following example is deferred to the appendix.

\begin{example}\label{ex:unbounded}
	\begin{enumerate} 
		\item[$(i)$] When $\nu$ has exponential tail, $\mu_{\nu,\alpha}$ has a tail of a Gamma distribution with shape parameter $1-\alpha/2$ and rate $1$. More precisely, if $\nu(dx)=e^{-x}\1(x>0)dx$, then
		\bea\nonumber
		f_{{\nu,\alpha}}(x)\sim \frac{1}{\Gamma\l(1-\frac{\alpha}{2}\r)}x^{-\frac{\alpha}{2}}e^{-x},\quad x\rw\infty.
		\eea
		
		\item[$(ii)$] When $\nu$ has polynomial tail, $\mu_{\nu,\alpha}$ has the same polynomial tail with a tail index independent of $\alpha$, a situation very different from the previous one. More precisely, if $\nu(dx)=sx^{-s-1}\1(x>1)dx,\ s>\frac{\alpha}{2}$, then
		\bea\label{Eq-Quadratic-form-nu-pareto-tail-probability}
		f_{{\nu,\alpha}}(x)\sim \frac{s}{\pi}\sin\l(\frac{\alpha\pi}{2}\r)\l(B\l(\frac{\alpha}{2},s+1-\frac{\alpha}{2}\r)+B\l(\frac{\alpha}{2}+1,s-\frac{\alpha}{2}\r)\r)x^{-s-1},\quad x\rw\infty,
		\eea
		where $B(x,y)$ is the Beta function.
	\end{enumerate}
\end{example}

%%%%%%%%%%%%%%%%%%%%%%%%%%%%%%%%%%%%%%%%%%%%
\section{Application in RMT: $\alpha$-heavy MP law}
\label{sec:app-RMT}

As explained in the introduction, the limiting spectrum of a sample correlation matrix is described 
by the celebrated \MP\ (MP) law when the initial entries $(X_{ij})$ have a finite variance. By considering regularly
varying variables $X_{ij}$ with index $\alpha\in(0,2)$, Heiny and Yao \cite{Heiny:Yao:AOS} first established a new limiting spectral 
distribution $H_{\alpha,\gamma}$, called the $\alpha$-heavy MP law. This limit is derived via the method of 
moments, so that we only know the sequence of moments of $H_{\alpha,\gamma}$. Analytical properties of 
$H_{\alpha,\gamma}$ like its support, its absolute continuity or singular components (with respect to the 
Lebesgue measure), or its tail behavior at infinity (assuming an unbounded support) remain open. 
In this section, using the main theorems developed in the previous section, we advance toward these
open questions by proving that except for a possible mass at the origin, the the $\alpha$-heavy \MP law has no atoms. The analysis of the $\alpha$-heavy \MP law is different from the classical ones, due to the fact that the resolvent is no longer approximated by a constant diagonal matrix that appears in the classical local law. Instead, the resolvent should be approximated by a random diagonal matrix. Although the limiting law of the resolvent's diagonal is hard to solve, we can still derive the no-atom property thanks to the continuity property of the limiting distribution of the quadratic forms $Q_{n,1}$; see Theorem~\ref{thm::quadratic-from-property}.
\medskip

Recall that  $\y_1, \ldots, \y_p \in \R^n$  are the columns of the matrix $\Y^{\top}=\Y_n^{\top}$ in \eqref{eq:defRY_new}.
Throughout this section, we will assume the  \emph{high-dimensional proportional regime}
\[
p = p_n \to \infty \quad \text{ and } \quad \frac{p}{n} \to \gamma \in (0, \infty)\,,\qquad \text{as} \quad n \to \infty.
\]
Fix \( z \in \C^+ \) and define the matrix resolvent
$$
\B_n(z) := \left( \Y_n^\top \Y_n - z \bfI_n \right)^{-1} =  \left( \sum_{i=1}^p \y_i \y_i^\top - z \bfI_n \right)^{-1} \in \mathbb{C}^{n \times n}.
$$
Let \( \nu_{z,n} \) denote the empirical measure of the diagonal entries of \( \B_n(z) \), that is,
$$
\nu_{z,n} := \frac{1}{n} \sum_{j=1}^n \delta_{\B_n(z)_{jj}}.
$$
Note that $\nu_{z,n}$ is a probability measure with bounded support inside $H_z=\{\zeta\in\C:|\zeta|\leq1/\operatorname{dist}(z,\R^+)\}$. In the light-tailed regime where the local law holds, we have $\mf{B}_n(z)_{11}-\E[\mf{B}_n(z)_{11}]\cas 0$, hence $(\nu_{z,n})_n$ converges weakly \as\ to a Dirac measure. However, in the heavy-tailed regime, $(\nu_{z,n})_n$ might have a non-degenerate weak limit point. We first show that \( \nu_{z,n} \), as a random measure, concentrates around its expectation only using the independence and unit length of $\y_i$'s.

\begin{proposition}\label{Prop-concentration-resolvant-diagonal}
	Let \( f : \mathbb{C} \to \mathbb{\C} \) be an $L_f$-Lipschitz function. Then for any \( t > 0 \) and $z\in \C\backslash[0,+\infty)$,
	\bea\nonumber
	\P\l(\l|\frac{1}{n} \sum_{j=1}^n f\big( \B_n(z)_{jj} \big)-\E\l[\frac{1}{n} \sum_{j=1}^n f\big( \B_n(z)_{jj} \big)\r]\r|>t\r)\leq2\exp\l(-\frac{n^2t^2\operatorname{dist}(z,\R^+)}{8pL_f^2}\r).
	\eea
\end{proposition}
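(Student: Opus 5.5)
The plan is to apply McDiarmid's bounded-difference inequality to the function
$$F(\y_1,\ldots,\y_p)=\frac1n\sum_{j=1}^n f\big(\B_n(z)_{jj}\big).$$
This is a function of the $p$ independent unit vectors $\y_1,\ldots,\y_p$; different rows of $\Y$ are independent, and only this independence and $\twonorm{\y_i}=1$ will be used. Since $f$ is complex-valued, I apply the inequality to $\Re F$ and $\Im F$ separately, or equivalently use Azuma's inequality for the Doob martingale $\E[F\mid \y_1,\ldots,\y_k]$, which handles bounded complex increments directly. Suppose that replacing one $\y_k$ by an independent copy $\y_k'$ changes $F$ by at most $c_k=c/n$. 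Then Azuma/McDiarmid gives
$$\P(|F-\E F|>t)\le 2\exp\Big(-\frac{t^2}{2\sum_k c_k^2}\Big)=2\exp\Big(-\frac{n^2t^2}{2pc^2}\Big).$$
To match the displayed exponent $n^2t^2\operatorname{dist}(z,\R^+)/(8pL_f^2)$, it suffices to show $c^2\le 4L_f^2/\operatorname{dist}(z,\R^+)$, i.e.\ $c\le 2L_f\operatorname{dist}(z,\R^+)^{-1/2}$.

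\textbf{Bounded differences.} Let $\B_{(k)}=(\sum_{i\neq k}\y_i\y_i^\top - z\bfI_n)^{-1}$. By Sherman--Morrison,
$$\B_n-\B_{(k)}=-\frac{\B_{(k)}\y_k\y_k^\top\B_{(k)}}{1+\y_k^\top\B_{(k)}\y_k}.$$
Because $\B_{(k)}$ is complex symmetric, the $j$-th diagonal entry of this difference is $-(\B_{(k)}\y_k)_j^2/(1+\y_k^\top\B_{(k)}\y_k)$. Using the Lipschitz property of $f$ and the triangle inequality through $\B_{(k)}$, I obtain
$$|F(\ldots,\y_k,\ldots)-F(\ldots,\y_k',\ldots)|\le \frac{L_f}{n}\big(r(\y_k)+r(\y_k')\big),\qquad r(\y):=\frac{\twonorm{\B_{(k)}\y}^2}{|1+\y^\top\B_{(k)}\y|}.$$
Everything therefore reduces to the deterministic bound $r(\y)\le \operatorname{dist}(z,\R^+)^{-1/2}$ for unit vectors $\y$.

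\textbf{The key estimate, and main obstacle.} Diagonalize $\sum_{i\ne k}\y_i\y_i^\top=\sum_l\lambda_l\mathbf u_l\mathbf u_l^\top$ with $\lambda_l\ge0$, and set $w_l=(\mathbf u_l^\top\y)^2\ge0$, so that $\sum_l w_l=1$. Then
$$r(\y)=\frac{\sum_l w_l|\lambda_l-z|^{-2}}{\big|1+\sum_l w_l(\lambda_l-z)^{-1}\big|}.$$
This is a scalar problem about a probability vector $(w_l)$ and the points $(\lambda_l-z)^{-1}$. These points lie on the image of $[0,\infty)$ under $\lambda\mapsto(\lambda-z)^{-1}$, which is an arc of a circle through $0$ contained in the disc of radius $1/\operatorname{dist}(z,\R^+)$.

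Two elementary bounds are available. First, the numerator is at most $\operatorname{dist}^{-2}$. Second, projecting onto the direction of $\bar z$ (for $z\in\C^+$ this is the imaginary-part identity $\Im(\y^\top\B_{(k)}\y)=\Im z\,\twonorm{\B_{(k)}\y}^2$) gives a denominator bound of the type $|1+\y^\top\B_{(k)}\y|\ge \operatorname{dist}\cdot\twonorm{\B_{(k)}\y}^2$, hence $r\le 1/\operatorname{dist}$. For the case $z<0$ one checks this directly: the numerator is at most $\operatorname{dist}^{-1}\sum_l w_l(\lambda_l-z)^{-1}$, so $r\le \operatorname{dist}^{-1}s/(1+s)$ with $s=\sum_l w_l(\lambda_l-z)^{-1}$.

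To reach the exponent exactly as stated, I would try to sharpen the second bound using the geometry of the arc. If $1+\y^\top\B_{(k)}\y$ is small, the convex combination $\sum_l w_l(\lambda_l-z)^{-1}$ must lie near $-1$. On the arc, such points have modulus at least of order $\operatorname{dist}$-comparable to $1$. From this I would extract $r\le \operatorname{dist}(z,\R^+)^{-1/2}$, possibly with a constant, by interpolating between the $\operatorname{dist}^{-1}$ and $\operatorname{dist}^{-2}$ regimes relative to the scale $|1+s|\ge$ the distance from $-1$ to the convex hull of the arc.

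This interpolation is the step I expect to be delicate. If it only yields $r\le C\operatorname{dist}^{-1/2}$, the argument still gives the stated form with a modified numerical constant. If it only yields $r\le\operatorname{dist}^{-1}$, then the proof gives the variant with $\operatorname{dist}(z,\R^+)^2$ in the exponent, and the two forms agree up to constants on compact regions of $z$ bounded away from $\R^+$.
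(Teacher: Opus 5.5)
Your architecture is the paper's: the Doob martingale in $\y_1,\ldots,\y_p$, the leave-one-out resolvent via Sherman--Morrison, and the reduction to $r(\y)=\twonorm{\B_{(k)}\y}^2/|1+\y^\top\B_{(k)}\y|$. Your bound $r\le\operatorname{dist}(z,\R^+)^{-1}$ is correct. Done uniformly, it even repairs the paper's case split, which for $\Re z<0$, $\Im z\neq0$ only gives $1/|\Im z|>1/|z|=1/\operatorname{dist}(z,\R^+)$. The right device there is multiplication by $-z$, not projection onto $\bar z$. With $s=\y^\top\B_{(k)}\y$ and $N=\twonorm{\B_{(k)}\y}^2$,
\[
\Re\bigl(-z(1+s)\bigr)=-\Re z+\sum_l w_l\,\frac{|z|^2-\lambda_l\Re z}{|\lambda_l-z|^2}\ge |z|^2N,
\]
so $|1+s|\ge|z|N$; for $\Re z\ge0$ use $|\Im s|=|\Im z|N$ instead. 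The step that fails is the sharpening to $r\le C\operatorname{dist}(z,\R^+)^{-1/2}$. Take $z=-d$ and $\y$ in the kernel of $\sum_{i\neq k}\y_i\y_i^\top$, e.g.\ $\y_k=\e_1$ while the other $\y_i$ vanish in the first coordinate. All the weight then sits at $\lambda=0$, and $r(\y)=d^{-2}/(1+d^{-1})=1/\{d(1+d)\}$, so $r\sqrt d\to\infty$ as $d\downarrow0$. The extremal configuration is one where $s\approx d^{-1}$ is large, not one where $1+s$ is small. There $|1+s|$ equals $\operatorname{dist}\cdot N$ up to the factor $1+d$, so no geometry of the arc can beat $r\le\operatorname{dist}^{-1}$.

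In fact no argument can reach the displayed exponent, because the displayed inequality is false when $\operatorname{dist}(z,\R^+)$ is small. Let $n=2$, $p=1$, $z=-d$. Then $\B_n(z)=d^{-1}\bfI_2-\{d(1+d)\}^{-1}\y_1\y_1^\top$. Take the $1$-Lipschitz function $f(x)=|x-c_d|$ with $c_d=d^{-1}-\{2d(1+d)\}^{-1}$, and use $Y_{11}^2+Y_{12}^2=1$. This gives $\frac12\sum_{j}f(\B_n(z)_{jj})=G/\{d(1+d)\}$, where $G=|Y_{11}^2-\tfrac12|$ is non-degenerate with a law free of $d$. Choose $t=s/\{d(1+d)\}$ with $\P(|G-\E G|>s)>0$. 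The left side is then $\P(|G-\E G|>s)$, while the right side is $2\exp(-s^2/\{2d(1+d)^2\})\to0$ as $d\downarrow0$. The true bound has $\operatorname{dist}(z,\R^+)^2$ in the exponent, and that is exactly what the paper's own proof yields. It obtains $|M_k-M_{k-1}|\le 2L_f/\{n\operatorname{dist}(z,\R^+)\}$, and Azuma with these increments gives $2\exp\bigl(-n^2t^2\operatorname{dist}(z,\R^+)^2/(8pL_f^2)\bigr)$, not the displayed power. So your fallback is the correct statement and coincides with the paper's argument. It implies the displayed form only when $\operatorname{dist}(z,\R^+)\ge1$, and it is all the subsequent Borel--Cantelli step needs.
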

The proof of Proposition~\ref{Prop-concentration-resolvant-diagonal} relies on a bounded martingale difference argument and can be found in the appendix.
Proposition \ref{Prop-concentration-resolvant-diagonal} and the Borel--Cantelli lemma yield for any Lipschitz function $f$ and $z\in \C\backslash[0,+\infty)$,
$$
\int_{\C} f(x)\nu_{z,n}(dx)-\E\l[\int_\C f(x)\nu_{z,n}(dx)\r] \cas  0 \,, \qquad \nto.
$$
Note that $\{\E[\nu_{z,n}]\}_n$, as a sequence of deterministic probability measures, is also supported inside $H_z$. By Prokhorov's theorem, for every $z\in\C\backslash[0,+\infty)$ the sequence  $\{\E[\nu_{z,n}]\}_n$ has a weakly convergent subsequence. For any Lipschitz function $f$, we have
$$
\E[f(\B_n(z)_{11})]=\frac{1}{n}\sum_{j=1}^n \E[f(\B_n(z)_{jj})]=\int_{\C}f(x)\E[\nu_{z,n}](dx).
$$ Hence a subsequence of $\{\B_n(z)_{11}\}_n$ converges in distribution if and only if the same subsequence of $\{\E[\nu_{z,n}]\}_n$ converges weakly to the same limit. For any set $D\subset \C$, we denote $\ell_\infty(D)$ as the Banach space of all uniformly bounded complex functions on $D$ equipped with the sup-norm.
\begin{lemma}\label{lemma-uniform-tightness-of-resolvant-diagonal}
	There exists a random holomorphic function $\psi:\C\backslash[0,+\infty)\rw \C$ and a subsequence $(n_k)_{k=1}^\infty$ such that, for any compact set $D\subset \C\backslash[0,+\infty)$, $\{\B_{n_k}(\cdot)_{11}|_D\}_{k=1}^\infty$ converges weakly  to $\psi|_D$ as random elements valued in the Banach space $\ell_\infty(D)$.
\end{lemma}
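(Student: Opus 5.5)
We view $g_n:=\B_n(\cdot)_{11}$ as a random holomorphic function on $\Omega:=\C\backslash[0,+\infty)$ and apply a tightness argument in the space $\mathcal H(\Omega)$ of holomorphic functions with the topology of locally uniform convergence. This space is Polish: it is metrizable via an exhaustion of $\Omega$ by compacts $D_1\subset D_2\subset\cdots$ with $d(f,g)=\sum_m 2^{-m}\min(1,\sup_{D_m}|f-g|)$. Convergence in law in $\mathcal H(\Omega)$ then yields convergence in law of the restrictions in $\ell_\infty(D)$ for every compact $D$, because restriction $\mathcal H(\Omega)\to \ell_\infty(D)$ is continuous. The plan is to (a) show that $\{g_n\}$ is tight in $\mathcal H(\Omega)$, (b) extract a subsequence $n_k$ converging in law to some random element $\psi$ of $\mathcal H(\Omega)$ by Prokhorov's theorem, and (c) read off the statement for each compact $D$ by the continuous mapping theorem.

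For tightness we use the deterministic bound available for every $n$ and every realization. Since $\Y_n^\top\Y_n\ge 0$, the spectral theorem gives $\|\B_n(z)\|\le 1/\operatorname{dist}(z,\R^+)$, hence $|g_n(z)|\le 1/\operatorname{dist}(z,\R^+)$. On each compact $D_m\subset\Omega$ the distance $\operatorname{dist}(z,\R^+)$ is bounded below by some $\delta_m>0$. So every $g_n$ lies in the set
\[
\mathcal K:=\{f\in\mathcal H(\Omega):\ \sup_{z\in D_m}|f(z)|\le \delta_m^{-1}\ \text{for all } m\},
\]
which is compact in $\mathcal H(\Omega)$ by Montel's theorem. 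Closedness holds because locally uniform limits of holomorphic functions are holomorphic and the bounds persist. Thus the laws of $g_n$ are supported in a fixed compact set, and tightness is immediate; in particular any limit $\psi$ also lies in $\mathcal K$ and is holomorphic. Measurability of $g_n$ as an $\mathcal H(\Omega)$-valued map follows from joint continuity of $(\omega,z)\mapsto \B_n(z)_{11}$ in the entries of $\Y_n$, since evaluations at a countable dense set generate the Borel $\sigma$-field.

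The only delicate step is the transfer to $\ell_\infty(D)$, a non-separable Banach space in which weak convergence is usually defined in the Hoffmann-Jørgensen sense. Here this causes no difficulty. Both $g_{n_k}|_D$ and $\psi|_D$ take values in the image of the continuous restriction map, which is separable. For any bounded continuous $h:\ell_\infty(D)\to\R$, the composite $h\circ(\cdot|_D)$ is bounded continuous on $\mathcal H(\Omega)$, so $\E h(g_{n_k}|_D)\to\E h(\psi|_D)$. The subsequence does not depend on $D$, because the convergence was obtained once in $\mathcal H(\Omega)$. I expect the main care to be needed in verifying these topological and measurability details rather than in any probabilistic estimate, since the uniform resolvent bound makes tightness essentially automatic.
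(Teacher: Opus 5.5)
Your argument is correct, but it is organized differently from the paper's. The paper works compact by compact. On the exhaustion $K_m=\{z:\operatorname{dist}(z,\R^+)\ge 1/m,\ |z|\le m\}$ it uses $\sup_{K_m}|\B_n(z)_{11}|\le m$ and the resolvent-identity Lipschitz bound $|\B_n(z_1)_{11}-\B_n(z_2)_{11}|\le m^2|z_1-z_2|$. Via Theorem 1.5.7 of van der Vaart--Wellner this gives asymptotic tightness in each non-separable space $\ell_\infty(K_m)$. The paper then extracts nested subsequences by Prokhorov and a diagonal argument, defines $\psi$ as the pointwise limit of the limits $B_{m,\infty}$, and only afterwards proves holomorphy of $\psi$ with Morera's theorem applied to the contour functional $J_\eta$.

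You instead lift everything to the Polish space $\mathcal H(\Omega)$. There Montel's theorem puts all laws of $g_n$ on one fixed compact set, so a single application of Prokhorov gives the subsequence. Holomorphy of $\psi$ is automatic, and each compact $D$ is handled by continuous mapping.

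Your route avoids several points the paper passes over quickly:
\begin{itemize}
\item The relation $B_{m+1,\infty}|_{K_m}=B_{m,\infty}$ only holds in law. Defining $\psi$ as a pointwise limit therefore implicitly needs a consistent, Kolmogorov-type construction of one random function.
\item The Morera step yields $J_\eta[\psi]=0$ almost surely for each fixed $\eta$. This must still be upgraded to all contours simultaneously, for example via countably many rational triangles and continuity.
\item Measurability issues in $\ell_\infty(D)$ disappear in your setting, since $g_n|_D$ has separable range.
\end{itemize}

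The paper's route, in exchange, stays inside the standard $\ell_\infty$-valued weak-convergence framework of empirical process theory. It also makes the equicontinuity explicit through the Lipschitz constant $m^2$, which your proof uses only implicitly through the Cauchy estimates behind Montel's theorem.
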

\begin{proof}
	Writing
	$K_m=\{z\in \C:\ \operatorname{dist}(z,\R^+)\geq\frac{1}{m},\ |z|\leq m\}, m\in \N$, 
	one has that $\sup_{z\in K_m}|\B_n(z)_{11}|\leq m$ and for $z_1,z_2\in K_m$,
	\begin{align*}
		|\B_{n}(z_1)_{11}-\B_{n}(z_2)_{11}|&=|\e_1^\top[(\Y_n^\top \Y_n-z_1 \bfI_n)^{-1}-(\Y_n^\top \Y_n-z_2 \bfI_n)^{-1}]\e_1|\\
		&=|\e_1^\top(\Y_n^\top \Y_n-z_2 \I_n)^{-1}(z_2-z_1)(\Y_n^\top \Y_n-z_1 \I_n)^{-1}\e_1|\\
		&\leq m^2|z_2-z_1|.
	\end{align*}
	Then by Theorem 1.5.7 in \cite{Book-van-der-Vaart-weak-convergence}, $\{\B_n(\cdot)_{11}\}_n$ is an asymptotically tight sequence in the Banach space $\ell_\infty(K_m)$. By Prokhorov's theorem, there is a subsequence $(n_{1,k})_{k=1}^\infty$ such that $\{\B_{n_{1,k}}(\cdot)_{11}|_{K_1}\}_{k=1}^\infty$ converges weakly to a random element $B_{1,\infty}(\cdot)$ valued in $\ell_{\infty}(K_1)$. Then again by Prokhorov's theorem, we can choose a further subsequence $(n_{2,k})_{k=1}^\infty\subset (n_{1,k})_{k=1}^\infty$ such that $\{\B_{n_{2,k}}(\cdot)_{11}|_{K_2}\}_{k=1}^\infty$ converges weakly to a random element $B_{2,\infty}(\cdot)$ valued in $\ell_{\infty}(K_2)$. Then $B_{2,\infty}(\cdot)|_{K_1}=B_{1,\infty}(\cdot)$. Repeating this argument, we obtain a double-labeled subsequence $(n_{m,k})_{m,k=1}^\infty$ and random elements $\{B_{m,\infty}\}_{m=1}^\infty$ such that for all $m\in \N$,
	$$
	(n_{m+1,k})_{k=1}^\infty\subset (n_{m,k})_{k=1}^\infty,\qquad B_{m+1,\infty}(\cdot)|_{K_m}=B_{m,\infty}(\cdot),
	$$
	and $\{\B_{n_{m,k}}(\cdot)_{11}|_{K_m}\}_{k=1}^\infty$ converges weakly to $B_{m,\infty}(\cdot)$. It suffices to choose $(n_k)=(n_{k,k})$, and $\psi(\cdot)=\lim_{m\rw\infty}B_{m,\infty}(\cdot)$ (in the sense of point-wise limit). 
	
	It remains to prove that $\psi(\cdot)$ is holomorphic on $\C\backslash[0,+\infty)$. Let $\eta$ be any closed and piecewise $C^1$ curve on $\C\backslash[0,+\infty)$, then there is $m\in \N$ such that $\eta\subset K_m$. Consider the functional 
	$$
	J_\eta[f]: \ell_\infty(K_m)\rw \C,\quad f\mapsto \oint_{\eta} f(z)dz.
	$$
	Then $J_\eta$ is bounded and continuous. Then by weak convergence and continuous mapping theorem,
	$
	J_\eta[\B_{n_k}(\cdot)_{11}|_{K_m}]\rw J_\eta[\psi(\cdot)]
	$ as $k \to\infty$.
	Given $\Y_n$, $\B_{n_k}(\cdot)|_{11}$ is analytic on $\C\backslash[0,+\infty)$. Then $J_\eta[\B_{n_k}(\cdot)_{11}|_{K_m}]=0$ from Cauchy's integral formula. Hence $J_\eta[\psi(\cdot)]=0$. Since $\eta$ can be arbitrary, by Morera's theorem \cite{Book-Stein-Complex-analysis}, $\psi(\cdot)$ is holomorphic on $\C\backslash[0,+\infty)$.
\end{proof}

% \begin{proposition}
	%     There exist holomorphic functions $\phi(z),\psi(z)$ on $\C^+$ satisfying the followings. 
	%     \begin{enumerate}
		%         \item There exist a subsequence $(p_k,n_k)_{k=1}^\infty$ such that for all $z\in \C^+$,
		%         \bea
		%         \phi(z)=\lim_{k\rw\infty} \E[(1+B_{p_k}(z)_{11})^{\frac{\alpha}{2}}],
		%         \eea
		%         and for all $z\in \C,\ \Re z<0$,
		%         \bea
		%         \psi(z)=\lim_{k\rw\infty} \E[(1+B_{p_k}(z)_{11})^{\frac{\alpha}{2}-1}].
		%         \eea
		%         \item For all $z\in \C^+$,
		%         \bea
		%         s_{H_{\alpha,\gamma}}(z)=-\frac{\psi(z)}{z\phi(z)}.
		%         \eea
		%     \end{enumerate}
	
	% \end{proposition}

It turns out that this random function $\psi$, together with the quadratic form results in Theorem \ref{Prop-quadratic-form}, facilitates an elegant expression for the Stieltjes transform of the $\alpha$-heavy MP law $H_{\alpha,\gamma}$.
\begin{theorem}\label{prop-aHeavyMP-Stieltjes-randomholomorphic-embedding}
	Let $\psi$ be the random holomorphic function in Lemma \ref{lemma-uniform-tightness-of-resolvant-diagonal}, then for $0<\alpha<2$ and $\gamma>0$, it holds
	\bea\label{eq-aheavy-embedding}
	s_{H_{\alpha,\gamma}}(z)=-\frac{\E[(1+\psi(z))^{\frac{\alpha}{2}-1}]}{z\E[(1+\psi(z))^{\frac{\alpha}{2}}]},\qquad \ z\in \C\backslash[0,+\infty).
	\eea
\end{theorem}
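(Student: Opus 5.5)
We combine the Schur complement (Sherman--Morrison) identity for the resolvent of $\bfR_n=\Y_n\Y_n^\top$ with Theorem~\ref{Prop-quadratic-form} applied to a resolvent matrix that is independent of $\y_1$. Write $\mf{R}_n(z)=(\Y_n\Y_n^\top-z\bfI_p)^{-1}$. The diagonal entry satisfies
\begin{equation*}
\mf{R}_n(z)_{11}=\frac{1}{-z-z\,\y_1^\top \B_{n}^{(1)}(z)\y_1},\qquad \B_n^{(1)}(z):=\Big(\sum_{i=2}^p \y_i\y_i^\top - z\bfI_n\Big)^{-1}.
\end{equation*}
This follows from the block inversion formula $(\y_1^\top\y_1-z-\y_1^\top\Y_{(1)}^\top(\Y_{(1)}\Y_{(1)}^\top-z)^{-1}\Y_{(1)}\y_1)^{-1}$, together with $\|\y_1\|_2=1$ and the identity $\bfI-\Y_{(1)}^\top(\Y_{(1)}\Y_{(1)}^\top-z)^{-1}\Y_{(1)}=-z\B_n^{(1)}(z)$. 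By exchangeability of the rows, $\E[\tr \mf{R}_n(z)]/p=\E[\mf{R}_n(z)_{11}]$. Since the ESD of $\bfR_n$ converges to $H_{\alpha,\gamma}$ \cite{Heiny:Yao:AOS}, we get $s_{H_{\alpha,\gamma}}(z)=\lim_n\E[\mf{R}_n(z)_{11}]$ for $z\in\C\backslash[0,\infty)$. We therefore need the limit law of $Q_n:=\y_1^\top\B_n^{(1)}(z)\y_1$.

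The matrix $\A_n=\B_n^{(1)}(z)$ is independent of $\y_1$, and it has $\|\A_n\|\le 1/\operatorname{dist}(z,\R^+)$, so Assumption~\ref{ass:offdiag} holds and $Q_{n,2}\cip0$ by Lemma~\ref{Prop-quadratic-form-Qn2}. The diagonal entries are complex, so Theorem~\ref{Prop-quadratic-form} does not apply verbatim. Its moment computation, however, carries over unchanged to bounded complex diagonals: it only uses the multinomial expansion and Lemma~\ref{lem:allmoments}. Bounded complex moments then determine the law of $Q_{n,1}$, which lives on a compact subset of the convex hull of the diagonal. What we actually need is only a limit of $\E[g(Q_{n,1})]$ for a function $g$ that is analytic near that hull.

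Pass to the subsequence $(n_k)$ of Lemma~\ref{lemma-uniform-tightness-of-resolvant-diagonal}. The removal of one rank-one term changes each $\B_n(z)_{jj}$ by $O(1/n)$ on average, since the trace changes by at most $1/\operatorname{dist}(z,\R^+)$. Combined with Proposition~\ref{Prop-concentration-resolvant-diagonal}, this shows that $\frac1n\sum_j\delta_{\B_n^{(1)}(z)_{jj}}$ converges weakly a.s.\ along $(n_k)$ to the deterministic law $\nu_z$ of $\psi(z)$. The conditional moments $\E[Q_{n,1}^\ell\mid\A_n]$ then converge to the $m_\ell$ of Theorem~\ref{Prop-quadratic-form} with $\nu=\nu_z$. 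The generating-function computation in that proof gives $\sum_\ell m_\ell w^{\ell}/\ell=-\frac{2}{\alpha}\log\int(1-xw)^{\alpha/2}\nu_z(dx)$ for small $|w|$.

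The target function is $g(q)=(-z-zq)^{-1}$. I would evaluate $\lim\E[g(Q_{n,1})]$ through the transform of the limit law $\mu$: $\int(-z-zq)^{-1}\mu(dq)=-z^{-1}\int(1+q)^{-1}\mu(dq)$. Formally this is $z^{-1}s_\mu(-1)$, where $s_\mu$ is given by \eqref{Eq-Stieltjes-transform-Quadratic-form}:
\begin{equation*}
s_\mu(-1)=-\frac{\int(-1-x)^{\frac{\alpha}{2}-1}\nu_z(dx)}{\int(-1-x)^{\frac{\alpha}{2}}\nu_z(dx)}.
\end{equation*}
Up to branch factors $(-1)^{\alpha/2-1}/(-1)^{\alpha/2}=-1$, this yields
\begin{equation*}
\E[\mf{R}_n(z)_{11}]\to -\frac{\E[(1+\psi(z))^{\frac{\alpha}{2}-1}]}{z\,\E[(1+\psi(z))^{\frac{\alpha}{2}}]}.
\end{equation*}
This is \eqref{eq-aheavy-embedding}. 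Since the limit does not depend on the subsequence, the identity holds on all of $\C\backslash[0,\infty)$.

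The main obstacle is justifying the evaluation at the point $-1$ with complex support. The formula \eqref{Eq-Stieltjes-transform-Quadratic-form} is established for real $\nu$ by analytic continuation from $|w|$ large. Here we must check that $-1$ lies outside the closed convex hull of $\operatorname{supp}(\nu_z)$, and that the relevant powers are analytic on a domain connecting $-1$ to $\infty$ with consistent branches. First check the convexity property: for $z\in\C^+$, $\Im(z\B_n^{(1)}(z)_{jj})\ge0$, so $1+\psi(z)$ stays in a half-plane avoiding $0$; the case $\Re z<0$ is analogous. I would first prove the identity for $z$ with $\Re z<0$, where everything is real and positive, and then extend to all of $\C\backslash[0,\infty)$ by analyticity of both sides. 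The left side is analytic, and the right side is analytic by Lemma~\ref{lemma-uniform-tightness-of-resolvant-diagonal} and dominated convergence; the nonvanishing of the denominator follows as in the remark after Theorem~\ref{Prop-quadratic-form}.
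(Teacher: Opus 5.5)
Your strategy is right, and on the negative real axis it is the paper's argument. For $t<0$, Lemma~\ref{Prop-quadratic-form-Qn2} and Theorem~\ref{Prop-quadratic-form}, applied conditionally along $(n_k)$, give $\y_1^\top\B_n^{(1)}(t)\y_1\cid\mu_{\nu_t,\alpha}$; since $(1+q)^{-1}$ is bounded and continuous on $[0,\infty)$, evaluating $s_{\mu_{\nu_t,\alpha}}$ at $-1$ gives the formula. Two of your choices are cleaner than the paper's:
\begin{itemize}
\item You get $s_{H_{\alpha,\gamma}}(z)=-z^{-1}\lim_n\E[(1+\y_1^\top\B_n^{(1)}(z)\y_1)^{-1}]$ directly from the Schur complement and exchangeability, rather than through \cite[Theorem~2.5]{doernemann:heiny:2025}.
\item Your $O(1/n)$ comparison of the diagonals of $\B_n$ and $\B_n^{(1)}$ makes explicit a step the paper only asserts.
\end{itemize}
There are some slips:
\begin{itemize}
\item $-z^{-1}\int(1+q)^{-1}\mu(dq)=-z^{-1}s_\mu(-1)$, not $z^{-1}s_\mu(-1)$. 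Your final formula nevertheless has the right sign.
\item The resolvent is real only for $z\in(-\infty,0)$, not for every $z$ with $\Re z<0$. That segment is all the identity theorem needs, so the complex-diagonal digression can be dropped.
\item In that digression, holomorphic moments $\E[Q^\ell]$ do not determine the law of a complex random variable (compare the uniform law on the unit circle with $\delta_0$).
\end{itemize}

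The genuine gap is the continuation step. You claim the right-hand side is holomorphic on $\C\backslash[0,+\infty)$ ``by Lemma~\ref{lemma-uniform-tightness-of-resolvant-diagonal} and dominated convergence''. However, for the numerator $\E[(1+\psi(z))^{\frac{\alpha}{2}-1}]$, whose exponent is negative, you give no dominating function.
\begin{itemize}
\item For $\Re z\le 0$ this is harmless, since $\Re(1+\B_n(z)_{11})\ge 1$.
\item For $\Re z>0$, nothing in your argument keeps $1+\psi(z)$ away from $0$. Knowing it lies in the open upper half-plane does not help, because $0$ is on the boundary. Indeed, if the spectral measure of $\e_1$ sits near the eigenvalue $0$, then $1+\B_n(z)_{11}\approx (z-1)/z$, which is small for $z$ near $1$.
\end{itemize}
As written, the right-hand side is not even known to be finite near $(0,+\infty)$, so the identity theorem cannot be applied on all of $\C\backslash[0,+\infty)$.

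The missing ingredient is a quantitative lower bound. Using the spectral decomposition of $\Y_n^\top\Y_n$ and Cauchy--Schwarz, for $\Re z\ge 0$,
\[
|\Im(1+\B_n(z)_{11})|=|\Im z|\sum_{j}\frac{(\e_1^\top\bfv_j)^2}{|\lambda_j-z|^2}\ge\frac{|\Im z|}{|z|^2+((\Y_n^\top\Y_n)^2)_{11}}.
\]
Combine this with $\sup_n\E[((\Y_n^\top\Y_n)^2)_{11}]<\infty$. The paper gets this from $\int x^2H_{\alpha,\gamma}(dx)<\infty$; by symmetry one also has directly $\E[((\Y_n^\top\Y_n)^2)_{11}]=\frac1n\E\tr(\bfR_n^2)=\frac pn+\frac{p(p-1)}{n^2}$. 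Together these give two things:
\begin{itemize}
\item $\Im\psi(z)>0$ a.s.\ on $\C^+$, so $(1+\psi(z))^{\frac{\alpha}{2}-1}$ is pathwise holomorphic.
\item Via the portmanteau theorem and $x^{1-\alpha/2}\le 1+x$, the locally bounded estimate $\E|1+\psi(z)|^{\frac{\alpha}{2}-1}\lesssim |\Im z|^{\frac{\alpha}{2}-1}(1+|z|^2)$.
\end{itemize}
This estimate is what makes Fubini and Morera applicable in $\{\Re z>0\}\backslash\R$. It is the substantive part of the paper's continuation argument, and it is absent from your proposal.
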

We will first prove \eqref{eq-aheavy-embedding} for $z\in (-\infty,0)$, where $\psi(z)$ is real-valued and Theorem \ref{Prop-quadratic-form} is applicable. After that, we will prove that \eqref{eq-aheavy-embedding} holds for all $z\in \C\backslash(0,+\infty)$ by holomorphic extension.
\begin{proof}[Proof of Theorem \ref{prop-aHeavyMP-Stieltjes-randomholomorphic-embedding}]
	Let $z\in\C\backslash[0,+\infty)$. By \cite[Theorem 2.5]{doernemann:heiny:2025}, we know that
	\begin{equation}\label{eq:thm2.5}
		\lim_{n\rw\infty}\E[s_{\Y_n\Y_n^\top}(z)]=-\frac{1}{z}\lim_{n\rw\infty}\E\l[\frac{1}{1+W_n(z)+\gamma\E[s_{\Y_n\Y_n^\top}(z)]-z^{-1}(1-\gamma)}\r],
	\end{equation}
	where $W_n(z)=\y_1 {\B_{n,1}(z)} \y_1^\top-\frac{1}{n}\tr\B_{n,1}(z)$ with
	$$
	\B_{n,k}(z)=(\Y_n^\top \Y_n-\y_k\y_k^\top-z\I_n)^{-1}\,, \qquad k=1,\ldots, p.
	$$
	It is proved in \cite{Heiny:Yao:AOS} that
	$s_{\Y_n\Y_n^\top}(z)\cas s_{H_{\alpha,\gamma}}(z)$, as $\nto$, and therefore
	$$
	\lim_{n\rw\infty}\frac{1}{n}\tr \B_{n,1}(z)=\gamma s_{H_{\alpha,\gamma}(z)}-z^{-1}(1-\gamma)\quad \as
	$$
	In combination with \eqref{eq:thm2.5}, this yields
	\bea\nonumber
	s_{H_{\alpha,\gamma}}(z)=-\frac{1}{z}\lim_{n\rw\infty}\E\l[\frac{1}{1+\y_1^\top\B_{n,1}(z)\y_1}\r],\quad z\in \C\backslash[0,+\infty).
	\eea    
	Clearly, the results of Proposition \ref{Prop-concentration-resolvant-diagonal} and Lemma \ref{lemma-uniform-tightness-of-resolvant-diagonal} remain true when replacing $\B_n$ with $\B_{n,1}$. 
	
	Now let $\psi$ and the subsequence $(n_k)_{k=1}^\infty$ be as given in Lemma \ref{lemma-uniform-tightness-of-resolvant-diagonal}. For $t\in (-\infty,0)$,  the matrix $\B_{n,1}(t)$ is Hermitian. Since $\B_{n,1}(t)$ is a resolvent matrix, by Example \ref{ex:resolvent}, Assumption \ref{ass:offdiag} is satisfied. Then by Lemma \ref{Prop-quadratic-form-Qn2}, $$\y_1^\top \B_{n,1}(t)\y_1-\y_1^\top {\diag(\B_{n,1}(t))}\y_1\cip 0.$$ By Proposition \ref{Prop-concentration-resolvant-diagonal} and Lemma \ref{lemma-uniform-tightness-of-resolvant-diagonal}, the counting measure $n^{-1}\sum_{j=1}^n \delta_{\B_{n,1}(t)_{jj}}$ converges weakly almost surely to the law of $\psi(t)$, say $\nu_t$. By Theorem \ref{Prop-quadratic-form}, $\y_1^\top {\diag(\B_{n,1}(t))}\y_1$ (and also $\y_1^\top \B_{n,1}(t)\y_1$) converges in distribution to $\mu_{\nu_t,\alpha}$, where
	$$
	s_{\mu_{\nu_t,\alpha}}(\zeta)=-\frac{\int_\R(\zeta-x)^{\frac{\alpha}{2}-1}\nu_t(dx)}{\int_\R(\zeta-x)^{\frac{\alpha}{2}}\nu_t(dx)}=-\frac{\E[(\zeta-\psi(t))^{\frac{\alpha}{2}-1}]}{\E[(\zeta-\psi(t))^{\frac{\alpha}{2}}]},\quad \zeta\in \C\backslash\operatorname{supp}(\psi(t)).
	$$
	% \jh{I think we should use some connection with $\nu_{z,n}$ to make this step easier to follow.}\zd{ $\phi(z)$ is a limit point of $\nu_{z,n}$. Here we already let $n\rw\infty$, so $\nu_{z,n}$ is not involved. Everything reduces a problem of complex analysis. The idea is that, we need to do Theorem \ref{prop-aHeavyMP-Stieltjes-randomholomorphic-embedding} firstly for $z$ on negative real line, and then extend $z$ to all $\C\backslash[0,+\infty)$ via holomorphic extension. That's the reason why Proposition \ref{Prop-concentration-resolvant-diagonal} needs to hold for $z\in \C\backslash[0,+\infty)$, and in Lemma \ref{lemma-uniform-tightness-of-resolvant-diagonal} we need $\psi$ to be holomorphic on the whole $\C\backslash[0,+\infty)$, but not only $\C^+$. The reason that we have to start from $z$ on negative real line is that, in Theorem \ref{Prop-quadratic-form} we have to use moment method, then the quadratic form must be real (moments can not characterize complex probability measures), hence the diagonals must be real. The choice of using $\C\backslash[0,+\infty)$ instead of $\C$ involves sophisticated technical concerns through the paper. Therefore, I strongly recommend to put all the $\C\backslash[0,+\infty)$ back.}
	Since $\B_{n,1}(t)_{11}\geq 0$, $\psi(t)\geq 0$, and letting $\zeta=-1$, we have
	$$
	s_{H_{\alpha,\gamma}}(t)=-\frac{1}{t}s_{\mu_{\nu_t,\alpha}}(-1)=-\frac{\E[(1+\psi(t))^{\frac{\alpha}{2}-1}]}{t\E[(1+\psi(t))^{\frac{\alpha}{2}}]},\qquad t\in (-\infty,0).
	$$
	Since $|\psi(z)|\leq (\operatorname{dist}(z,\R^+))^{-1}$, by Morera's theorem and Fubini's theorem, $\E[(1+\psi(z))^{\frac{\alpha}{2}}]$ is holomorphic on $\C\backslash[0,+\infty)$. When $\Re z\leq 0$, we can use the spectral decomposition $\Y_n^\top \Y_n {-\y_1\y_1^\top} =\sum_{j=1}^n \lambda_j\bfv_j\bfv_j^\top$ to obtain
	$$
	\Re (1+\B_{n,1}(z)_{11})=1+\sum_{j=1}^n\frac{(\lambda_j-\Re z)(\e_1^{\top}\bfv_j)^2}{|\lambda_j-z|^2}\geq 1,
	$$
	from which we conclude 
	\bea\label{eq-bound-1+psi-Rez<0}
	\inf_{\Re z\leq 0}|1+\psi(z)|\geq 1.
	\eea
	Hence $\E[(1+\psi(z))^{\frac{\alpha}{2}-1}]$ is holomorphic when $\Re z<0$. 
	When $\Re z\geq 0$, by Cauchy--Schwartz inequality,
	$$
	\l(\sum_{j=1}^n \frac{(\e_1^\top \bfv_j)^2}{|\lambda_j-z|^2}\r)\l(\sum_{j=1}^n (\e_1^\top \bfv_j)^2|\lambda_j-z|^2\r)\geq \l(\sum_{j=1}^n (\e_1^\top \bfv_j)^2\r)^2=1,
	$$
	and we have
	\begin{align*}
		\sum_{j=1}^n (\e_1^\top \bfv_j)^2 |\lambda_j-z|^2&=
		\sum_{j=1}^n (\e_1^\top \bfv_j)^2 \l((\lambda_j-\Re z)^2+ (\Im z)^2\r)\\
		&=|z|^2+\sum_{j=1}^n(\e_1^\top \bfv_j)^2\lambda_j^2-2\Re z \sum_{j=1}^n (\e_1^\top \bfv_j)^2\lambda_j\\
		&\leq |z|^2+(\Y_n^\top \Y_n \Y_n^\top \Y_n)_{11},
	\end{align*}
	and hence 
	\bea\nonumber
	|\Im (1+\B_{n,1}(z)_{11})|&=|\Im z|\sum_{j=1}^n (\e_1^\top \bfv_j)^2 |\lambda_j-z|^2\\
	&\geq |\Im z|(|z|^2+(\Y_n^\top \Y_n \Y_n^\top \Y_n)_{11})^{-1}.
	\eea
	Consequently, $(1+\B_n(z)_{11})^{\frac{\alpha}{2}-1}$ is holomorphic when $\Re z>0$, and
	\bea\label{eq-bound-1+psi-Rez>0}
	|1+\B_{n}(z)_{11}|^{\frac{\alpha}{2}-1}\leq \frac{1}{|\Im z|^{1-\frac{\alpha}{2}}}(|z|^2+(\Y_n^\top \Y_n \Y_n^\top \Y_n)_{11})^{1-\frac{\alpha}{2}}.
	\eea
	Then
	\begin{align*}
		\E[|1+\psi(z)|^{\frac{\alpha}{2}-1}]&=\lim_{k\rw\infty}\E[|1+\B_{n_k}(z)_{11}|^{\frac{\alpha}{2}-1}]\\
		&\leq \frac{1}{|\Im z|^{1-\frac{\alpha}{2}}}\l(|z|^2+1+\lim_{n\rw\infty}\E[(\Y_n^\top \Y_n\Y_n^\top \Y_n)_{11}]\r)\\
		&=\frac{1}{|\Im z|^{1-\frac{\alpha}{2}}}\l(|z|^2+1+\int_{\R}x^2H_{\alpha,\gamma}(dx)\r).
	\end{align*}
	By (2.3) in \cite{Heiny:Yao:AOS}, $\int_{\R}x^2H_{\alpha,\gamma}(dx)<\infty$. Now let $\eta$ be any closed and piecewise $C^1$ curve in $\{\zeta\in \C^+:\Re \zeta >0\}$ such that $z$ is in interior of $\eta$. Then
	\begin{align*}
		\oint_\eta \E[(1+\psi(z))^{\frac{\alpha}{2}-1}] dz&=\E\l[\oint_\eta(1+\psi(z))^{\frac{\alpha}{2}-1}dz\r]\quad \text{(Fubini)}\\
		&=\E\l[\lim_{k\rw\infty}\oint_\eta (1+\B_{n_k}(z)_{11})^{\frac{\alpha}{2}-1}dz\r]\\
		&=0.
	\end{align*}
	Then by Morera's theorem, $\E[|1+\psi(z)|^{\frac{\alpha}{2}-1}]$  is holomorphic on $\{\zeta\in \C^+:\Re \zeta >0\}$. By combining the bound \eqref{eq-bound-1+psi-Rez<0},\eqref{eq-bound-1+psi-Rez>0} and the same procedure, $\E[|1+\psi(z)|^{\frac{\alpha}{2}-1}]$ is also holomorphic on $\{\zeta\in \C^+:\Re \zeta=0\}$.
	Thus, $\E[|1+\psi(z)|^{\frac{\alpha}{2}-1}]$ is holomorphic on $\C\backslash[0,+\infty)$. 
	
	Now consider 
	$$
	F(z)=s_{H_{\alpha,\gamma}}(z)+\frac{\E[(1+\psi(z))^{\frac{\alpha}{2}-1}]}{z\E[(1+\psi(z))^{\frac{\alpha}{2}}]},\quad z\in \C\backslash[0,+\infty),
	$$
	then $F(z)$ is holomorphic on $\C\backslash[0,+\infty)$ and is zero on $(-\infty,0)$. We conclude that $F(z)\equiv 0$ on $\C\backslash[0,+\infty)$, which finishes the proof.
\end{proof}

\begin{remark}
	Although Theorem \ref{prop-aHeavyMP-Stieltjes-randomholomorphic-embedding} holds only for $0<\alpha<2$, we can still recover the \MP law by letting $\alpha\uparrow2$. Taking $\alpha=2$ informally in Theorem \ref{prop-aHeavyMP-Stieltjes-randomholomorphic-embedding}, write $s(z)=s_{H_{\alpha,\gamma}}(z)$ for short, we have
	\be\label{eq-randomhol-embedding-alpha=2}
	s(z)=-\frac{1}{z\E[1+\psi(z)]}.
	\ee
	Assume the local law which formally needs some extra conditions holds. It says that 
	$$\mf{B}_n(z)_{11}-s_{\mf{Y}_n^\tp\mf{Y}_n}(z)\cas 0.$$ 
	Note that 
	$
	s_{\mf{Y}_n^\tp\mf{Y}_n}(z)\cas -\frac{1-\gamma}{z}+\gamma s(z)
	$.
	Hence the law of $\psi(z)$ is exactly the Dirac measure at $-\frac{1-\gamma}{z}+\gamma s(z)$. Then \eqref{eq-randomhol-embedding-alpha=2} becomes
	$$
	s(z)=-\frac{1}{z-1+\gamma+\gamma zs(z)},
	$$
	which we recognize as the Stieltjes transform of the \MP law. 
\end{remark}

Finally, we show that $H_{\alpha,\gamma}$ has no point masses on $(0,+\infty)$. 
\begin{proposition}\label{prop:nopoint}
	If $u>0$, then $H_{\alpha,\gamma}(\{u\})=0$.
\end{proposition}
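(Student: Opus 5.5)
The plan is to read off the atom from the Stieltjes transform via Lemma~\ref{Lemma-Recover-atom}, as in the proof of Theorem~\ref{thm::quadratic-from-property}:
$$
H_{\alpha,\gamma}(\{u\})=\lim_{\sphericalangle z\to u}(u-z)\,s_{H_{\alpha,\gamma}}(z),
$$
and to evaluate the right-hand side through the representation \eqref{eq-aheavy-embedding} of Theorem~\ref{prop-aHeavyMP-Stieltjes-randomholomorphic-embedding}. For $z\in\C^+$ approaching $u>0$ inside a cone $\{|\Re z-u|<\theta \Im z\}$, the factor $1/z$ stays bounded. So it suffices to show that
$$
|u-z|\,\frac{\big|\E[(1+\psi(z))^{\frac{\alpha}{2}-1}]\big|}{\big|\E[(1+\psi(z))^{\frac{\alpha}{2}}]\big|}\longrightarrow 0 .
$$

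\textbf{Step 1 (structure of $\psi$).} I would identify $\psi(z)$ as the Stieltjes transform $\int(\lambda-z)^{-1}\sigma(d\lambda)$ of a random probability measure $\sigma$ on $[0,\infty)$. This $\sigma$ is a subsequential weak limit of the spectral measures $\sum_j(\e_1^\top\bfv_j)^2\delta_{\lambda_j}$, and its second moment $m_2$ satisfies $\E m_2\le \int x^2H_{\alpha,\gamma}(dx)<\infty$. In particular $\Im\psi(z)>0$ on $\C^+$. The Cauchy--Schwarz bound in the proof of Theorem~\ref{prop-aHeavyMP-Stieltjes-randomholomorphic-embedding} passes to the limit and gives
$$
\Im(1+\psi(z))\ge \Im z\,(|z|^2+m_2)^{-1}.
$$

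\textbf{Step 2 (denominator control).} Since $\arg(1+\psi)\in(0,\pi)$, the variable $(1+\psi)^{\alpha/2}$ takes values in the cone $\{\arg\in(0,\alpha\pi/2)\}$. This cone has opening $<\pi$, so
$$
\big|\E[(1+\psi)^{\alpha/2}]\big|\ge \cos(\alpha\pi/4)\,\E|1+\psi|^{\alpha/2}
$$
(project onto the bisector direction $e^{i\alpha\pi/4}$). Writing $|1+\psi|^{\alpha/2-1}=|1+\psi|^{\alpha/2}\cdot|1+\psi|^{-1}$, the ratio becomes a weighted average:
$$
|u-z|\frac{\E|1+\psi|^{\alpha/2-1}}{\E|1+\psi|^{\alpha/2}}=\frac{\E\big[|1+\psi|^{\alpha/2}\,h_z\big]}{\E|1+\psi|^{\alpha/2}},\qquad h_z:=\frac{|u-z|}{|1+\psi(z)|}.
$$
By Step 1, in the cone $h_z\le |u-z|(|z|^2+m_2)/\Im z\le C_\theta(1+m_2)$, which is uniformly bounded with an integrable majorant.

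\textbf{Step 3 (pointwise behaviour of $h_z$).} On the event $\{\sigma(\{u\})>0\}$, we have $(z-u)\psi(z)\to-\sigma(\{u\})$ non-tangentially, so $|1+\psi|\gtrsim|u-z|^{-1}$ and hence $h_z\to0$. On $\{\sigma(\{u\})=0\}$, we have $(z-u)\psi(z)\to0$, and $h_z\to0$ unless $1+\psi(z)$ also tends to $0$. I expect this to be the main obstacle: ruling out, or showing negligible, the scenario where $1+\psi(z)\to0$ as $z\to u$. This corresponds to the ``Schur complement'' $1+\y_1^\top\B_{n,1}\y_1$ vanishing, which is exactly the mechanism that could create an atom.

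To handle it, I would first try to use the lower bound $\Im\psi(z)\ge \Im z\int|\lambda-z|^{-2}\sigma(d\lambda)$ together with $\Re(1+\psi)=0$ near such points. Then $|1+\psi|\ge \Im z\int|\lambda-u|^{-2}\sigma(d\lambda)\,(1+o(1))$, which yields $h_z\lesssim \theta\big(\int|\lambda-u|^{-2}\sigma(d\lambda)\big)^{-1}$. If that is not sharp enough, the fallback is to go back to finite $n$. There one uses the identity $\mu_{\Y\Y^\top}(\{u\})\le\frac{n}{p}\mu_{\Y^\top\Y}(\{u\})$ and the fact that an eigenvalue $u$ of $\Y^\top\Y$ forces $1+\y_1^\top\B_{n,1}(u)\y_1=0$. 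One then shows that the probability of this event vanishes, using the non-atomic conditional law $\mu_{\nu,\alpha}$ from Theorem~\ref{thm::quadratic-from-property}. This step relies on the fact that $\nu_t$ is non-degenerate, which follows because $\psi(t)$ is non-constant in the heavy-tailed regime.

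\textbf{Step 4 (conclusion).} Once $h_z\to0$ a.s. and the bound $h_z\le C_\theta(1+m_2)$ holds, dominated convergence finishes the argument. One also needs $\E|1+\psi|^{\alpha/2}$ bounded below, which holds since $|1+\psi|\ge\Im\psi\ge c\,\Im z/(|z|^2+m_2)$ together with the $|u-z|^{-1}$ growth on atoms. Apply dominated convergence to the weighted average, after normalising the weights $|1+\psi|^{\alpha/2}/\E|1+\psi|^{\alpha/2}$ using uniform integrability. This gives $H_{\alpha,\gamma}(\{u\})=0$.
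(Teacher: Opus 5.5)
Your Steps~1 and~2 are sound. The bound $\Im(1+\psi(z))\ge \Im z\,(|z|^2+m_2)^{-1}$ is the limiting form of \eqref{eq-bound-1+psi-Rez>0}, and projecting onto $e^{i\alpha\pi/4}$ validly gives $|\E[(1+\psi)^{\alpha/2}]|\ge\cos(\alpha\pi/4)\,\E|1+\psi|^{\alpha/2}$. The argument still does not close, because you never establish that $\E|1+\psi(z)|^{\alpha/2}$ stays away from zero as $z\to u$.

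Note first that the numerator needs no pointwise information. With $v=\Im z$, your Step~1 bound alone gives $|u-z|\,\E|1+\psi(z)|^{\frac{\alpha}{2}-1}=O(v^{\alpha/2})$ in any cone. So the pointwise claim $h_z\to0$ and the dominated-convergence step are a detour; everything hinges on the denominator. In Step~4 you justify the denominator bound with two ingredients. The first, $\Im\psi\ge c\,\Im z/(|z|^2+m_2)$, degenerates as $\Im z\to 0$. The second, the $|u-z|^{-1}$ growth, needs $\P(\sigma(\{u\})>0)>0$, and nothing in your proposal shows this.

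On the complement, the scenario you flag in Step~3 cannot be excluded by properties of $\psi$ alone. If $1+\psi(z)\approx c\,(z-u)$ with $c>0$, your ratio tends to $1/c\neq 0$; this is exactly how an atom would arise. Your first remedy only gives $h_z\lesssim\theta\big(\int|\lambda-u|^{-2}\sigma(d\lambda)\big)^{-1}$, which does not vanish when that integral is finite. The finite-$n$ fallback fails for three reasons:
\begin{itemize}
\item it confuses $u$ being an exact eigenvalue of $\Y_n^\top\Y_n$ with $u$ being an atom of the limit; a limiting atom comes from eigenvalues clustering near $u$, so $\P(1+\y_1^\top\B_{n,1}(u)\y_1=0)\to0$ says nothing about it;
\item Theorem~\ref{thm::quadratic-from-property} is not available at real $u>0$ inside the spectrum, where $\B_{n,1}(u)$ may not exist or may have unbounded norm, so the reduction via Assumption~\ref{ass:offdiag} is not guaranteed;
\item the non-degeneracy of $\nu_t$ is unproved.
\end{itemize}

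The missing idea is to tie $\psi$ back to $H_{\alpha,\gamma}$ through its mean and argue by contradiction, as the paper does. By exchangeability and boundedness,
\[
\E[\psi(z)]=\lim_k\E[\B_{n_k}(z)_{11}]=\lim_k\E[n_k^{-1}\tr\B_{n_k}(z)]=s_{\wt H_{\alpha,\gamma}}(z),
\]
where $\wt H_{\alpha,\gamma}=\gamma H_{\alpha,\gamma}+(1-\gamma)\delta_0$ is the LSD of $\Y_n^\top\Y_n$. So if $H_{\alpha,\gamma}(\{u\})>0$, then $w:=\wt H_{\alpha,\gamma}(\{u\})=\gamma H_{\alpha,\gamma}(\{u\})>0$ and $\E[\Im\psi(u+iv)]\sim w/v$. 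In your language, $\E[\sigma(\{u\})]=w>0$.

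Since $0<\Im\psi(z)\le v^{-1}$, this forces $\Im\psi(z)\ge w/(4v)$ on an event of probability at least $w/4$. Hence $|\E[(1+\psi(z))^{\alpha/2}]|\gtrsim v^{-\alpha/2}\to\infty$; your cone projection suffices here, while the paper bounds the imaginary part instead. Combined with the $O(v^{\alpha/2})$ numerator, this gives $(u-z)\,s_{H_{\alpha,\gamma}}(z)\to 0$, contradicting the assumed atom. With this link, Step~3, the fallback and the dominated-convergence argument become unnecessary.
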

\begin{proof}[Proof of Proposition \ref{prop:nopoint}]
	The proof strategy is to utilize \eqref{eq-aheavy-embedding} for $z\in \C^+$. If there were an atom at $\Re z$, then the two sides of \eqref{eq-aheavy-embedding} would have different asymptotic behavior as $\Im z\downarrow 0$, which is impossible.
	
	To arrive at a contradiction, assume that there exists some $u>0$ such that $H_{\alpha,\gamma}(\{u\})>0$. Let $z=u+iv$ with $0<v\leq 1$. From Theorem \ref{prop-aHeavyMP-Stieltjes-randomholomorphic-embedding}, 
	\bea\nonumber
	\E[(1+\psi(z))^{\frac{\alpha}{2}}]=-\frac{1}{z}\cdot \frac{(z-u)\E[(1+\psi(z))^{\frac{\alpha}{2}-1}]}{(z-u)s_{H_{\alpha,\gamma}}(z)}.
	\eea
	In view of \eqref{eq-bound-1+psi-Rez>0}, we have, as $v\downarrow0$,
	$\E[|1+\psi(z)|^{\frac{\alpha}{2}-1}]\lesssim v^{\frac{\alpha}{2}-1}$ and consequently
	$$
	|(z-u)\E[(1+\psi(z))^{\frac{\alpha}{2}-1}]|\lesssim v^{\frac{\alpha}{2}}\rw 0.
	$$
	Since
	$(z-u)s_{H_{\alpha,\gamma}}(z)\rw H_{\alpha,\gamma}(\{u\})>0$, this implies
	\bea\label{eq-atomproof-to-contradict-with}
	\lim_{v\downarrow0} \E[(1+\psi(z))^{\frac{\alpha}{2}}]=0.
	\eea
	Now let $\wt{H}_{\alpha,\gamma}$ be the LSD of $\Y_n^\top \Y_n$. Then $$w:=\wt{H}_{\alpha,\gamma}(\{u\})= (1-\gamma) + \gamma {H}_{\alpha,\gamma}(\{u\})>0.$$
	In view of 
	$
	\E[\Im \psi(z)]= \Im s_{\wt{H}_{\alpha,\gamma}}(z)\sim \frac{{w}}{v}$ as $ v\downarrow 0$,
	we get for $v$ sufficiently small that $\mathbb E[\Im\psi(z)] \ge \frac{w}{2v}$.
	
	By definition of $\B_n(z)$, we have $|\B_n(z)_{11}|\leq v^{-1}$ and $\Im \B_n(z)_{11}>0$. Then by construction of $\psi(z)$ in Lemma \ref{lemma-uniform-tightness-of-resolvant-diagonal},
	$$
	|\psi(z)|\leq v^{-1},\qquad \Im\psi(z)>0.
	$$
	Writing
	$
	A_v := \left\{\Im\psi(z)\ge \frac{w}{4v}\right\},
	$ we obtain
	$$
	\mathbb E[\Im\psi(z)]
	\le \frac{w}{4v}\,\mathbb P(A_v^c)+\frac{1}{v}\,\mathbb P(A_v)
	= \frac{w}{4v}+\frac{1-\frac w4}{v}\,\mathbb P(A_v).
	$$
	Rearranging yields
	$$
	\mathbb P(A_v)\ge \frac{v\,\mathbb E[\Im\psi(z)]-\frac w4}{1-\frac w4}
	\ge \frac{\frac w2-\frac w4}{1-\frac w4}
	=:p_0>0
	$$
	for all sufficiently small $v$. In particular, $\mathbb P(A_v)$ is bounded below by a positive constant independent of $v$.
	
	On $A_v$ we have 
	$$
	|1+\psi(z)| \ge \Im(1+\psi(z)) \ge \Im (\psi(z))\ge \frac{w}{4v}.
	$$
	Write $1+\psi(z)=re^{i\theta}$ with $\theta\in(0,\pi)$ (since $\Im(1+\psi(z))>0$), and $\beta=\alpha/2\in(0,1)$. Then
	$$
	\Im\l[(1+\psi(z))^\beta\r] = r^\beta \sin(\beta\theta).
	$$
	Furthermore,
	$$
	\sin\theta=\frac{\Im(1+\psi(z))}{|1+\psi(z)|}
	\ge \frac{\Im(1+\psi(z))}{1+|\psi(z)|}
	\ge \frac{\frac{w}{4v}}{\frac{2}{v}}=\frac{w}{8},
	$$
	so $\theta\ge \theta_0:=\arcsin(w/8)>0$ and hence $\sin(\beta\theta)\ge \sin(\beta\theta_0)=:s_0>0$. Therefore, on $A_v$,
	$$
	\Im\l[(1+\psi(z))^\beta\r]
	\ge \left(\frac{w}{4v}\right)^\beta s_0
	=: C\,v^{-\beta},
	$$
	where $C=(w/4)^\beta s_0>0$ depends only on $w$ and $\beta$.
	
	Combining the above bounds, we have
	$$
	\mathbb E\l[\Im\l[(1+\psi(z))^\beta\r]\r]
	\ge \mathbb E\l[\Im\l[(1+\psi(z))^\beta \r]\mathbf 1_{A_v}\r]
	\ge C\,v^{-\beta}\,\mathbb P(A_v)
	\ge C\,p_0\,v^{-\beta}.
	$$
	Hence $\lim_{v\downarrow0} \Im\E[(1+\psi(z))^{\frac{\alpha}{2}}]=+\infty$, a contradiction to \eqref{eq-atomproof-to-contradict-with}. Therefore, we conclude that $H_{\alpha,\gamma}(\{u\})=0$ for any $u>0$ and finish the proof.
\end{proof}

\subsection{Slowly varying tails ($\alpha=0$)}

On the one hand, Proposition~\ref{prop:nopoint} shows that for $\alpha\in (0,2)$, the $\alpha$-heavy MP law $H_{\alpha,\gamma}$ has no point mass on the positive real line. On the other hand, \cite{Heiny:Yao:AOS} proves that the measure $H_{0,\gamma}$, defined as the $\lim_{\alpha \to 0} H_{\alpha,\gamma}$, is a zero-inflated Poisson distribution  given by
\begin{equation}\label{eq:zeroinflated}
	H_{0,\gamma}(\{0\})=1-\gamma^{-1}+\gamma^{-1}e^{-\gamma} \quad \text { and } \quad H_{0,\gamma}(\{k\})=\gamma^{-1}e^{-\gamma}\frac{\gamma^k}{k!},\quad k\in \N.
\end{equation}
This means that the limit of continuous measures is a discrete one. It is worth pointing out that the LSD of $\bfR$ for $\alpha=0$ has not been investigated in the literature. The main goal of this subsection is to prove that this LSD is indeed $H_{0,\gamma}$.

Throughout this subsection, $\xi$ is assumed to be slowly varying, that is, $\xi$ satisfies the regular variation condition \eqref{eq:regvar} with index $\alpha=0$. Proposition 3 in \cite{mason:zinn:2005} asserts that $\xi$ is slowly varying if and only if $n\beta_4 \to 1$.

It turns out that for slowly varying $\xi$ the random vectors $\y_i=(Y_{i1},\ldots, Y_{in})^\top, i=1,\ldots, p$ have a strikingly simple structure. For its description
we need the notation
$$k_i:=\underset{k=1,\ldots,n}{\arg \max} \,|Y_{ik}|\,$$
and recall that $\e_i\in \R^n$ is the unit vector with $i$-th component $1$.

\begin{lemma}\label{lem:structure}
	For slowly varying $\xi$ it holds $\twonorm{\y_i-\sign(Y_{ik_i}) \e_{k_i}}\cip 0$ for $i=1,\ldots,p$.
\end{lemma}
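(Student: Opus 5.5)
Since $\y_i\eid\y_1$, it suffices to treat $i=1$, and we drop the row index. The plan is to reduce the claim to the statement $Y_{1k_1}^2\cip 1$, and then obtain that from $n\beta_4\to 1$ (Proposition~3 in \cite{mason:zinn:2005}).

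\emph{Reduction.} Because $\twonorm{\y_1}=1$, the sign choice gives
\begin{equation*}
\twonorm{\y_1-\sign(Y_{1k_1})\e_{k_1}}^2=2-2|Y_{1k_1}|.
\end{equation*}
So it is enough to show $|Y_{1k_1}|\cip 1$, or equivalently $M_n:=\max_j Y_{1j}^2\cip 1$.

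\emph{Key step.} Set $R_n=\sum_{j=1}^n Y_{1j}^4$. By exchangeability, $\E[R_n]=n\beta_4\to 1$. Two elementary bounds hold deterministically:
\begin{equation*}
R_n\le M_n\sum_j Y_{1j}^2=M_n\le 1 \qquad\text{and}\qquad R_n\le 1.
\end{equation*}
Since $1-M_n\ge 0$ and $\E[1-M_n]\le \E[1-R_n]=1-n\beta_4\to 0$, Markov's inequality gives $M_n\cip 1$. Then $|Y_{1k_1}|=\sqrt{M_n}\cip 1$, which completes the proof.

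\emph{Main obstacle.} The only nontrivial input is the equivalence between slow variation and $n\beta_4\to1$, which we cite rather than prove. Ties in the arg max do not matter, because the identity in the reduction holds for any maximizing index.
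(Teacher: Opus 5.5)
Your proof is correct and follows essentially the same route as the paper. Both arguments use the identity $\twonorm{\y_1-\sign(Y_{1k_1})\e_{k_1}}^2=2(1-|Y_{1k_1}|)$ and then combine $n\beta_4\to 1$ with $\sum_j Y_{1j}^4\le 1$. Your direct comparison $\sum_j Y_{1j}^4\le \max_j Y_{1j}^2\le 1$, followed by Markov's inequality, is cleaner than the paper's detour through the event $E_n$ of two coordinates exceeding $\vep$. On $E_n$ the paper's bound should read $(1-\vep^2)^2+\vep^4$ rather than $(1-\vep)^4+\vep^4$, and its final step from $\P(E_n)\to 0$ to $|Y_{1k_1}|\cip 1$ tacitly needs $\sum_j Y_{1j}^4\cip 1$ again.
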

\begin{proof}
	Since $\twonorm{\y_i}=1$, we have 
	$$\twonorm{\y_i-\sign(Y_{ik_i}) \e_{k_i}}^2 =(1-Y_{ik_i}^2)+ (Y_{ik_i}-\sign(Y_{ik_i}))^2=2(1-|Y_{ik_i}|)\,.$$
	For $\vep \in(0,1)$, consider the event $E_n=\{\exists k\neq \ell : |Y_{1k}|>\vep, |Y_{1\ell}|>\vep\}$.
	On $E_n$ we have $\sum_{k=1}^n Y_{1k}^4\le (1-\vep)^4+\vep^4$. Since $\sum_{k=1}^n Y_{1k}^4\le 1$ and $n\beta_4\to 1$, this implies $\P(E_n)\to 0$ and in turn $|Y_{1k_1}|\cip 1$ because $\twonorm{\y_1}=1$.
\end{proof}
Next, we extend Theorem~\ref{Prop-quadratic-form} to the case $\alpha=0$. 
\begin{proposition}\label{prop:extension}
	If $M:=\limsup_{n\rw\infty}\max_{1\leq i\leq n}|a_{ii}^{(n)}|<\infty$ almost surely, $\frac{1}{n}\sum_{i=1}^n\delta_{a_{ii}^{(n)}}$ converges weakly almost surely to a deterministic probability measure $\nu$ and $\xi$ is slowly varying ($\alpha=0$), then the distribution of $Q_{n,1}$ converges weakly to $\nu$.
\end{proposition}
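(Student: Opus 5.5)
We prove Proposition~\ref{prop:extension} by the method of moments, exactly as in the proof of Theorem~\ref{Prop-quadratic-form}; the only new input is the asymptotics of the mixed moments $\beta_{2\ell_1,\ldots,2\ell_r}$ for slowly varying $\xi$. As there, we first take $\A_n$ deterministic and handle random $\A_n$ at the end by conditioning, using independence of $\A_n$ and $\y_1$. The target is $\E[Q_{n,1}^\ell]\to\int_\R x^\ell\nu(dx)$ for every $\ell\in\N$. Since $\nu$ is supported in $[-M,M]$ (weak limit of measures eventually supported in $[-M-\vep,M+\vep]$), it is determined by its moments. Hence this suffices.

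\emph{Moment asymptotics.} We claim that $n\beta_{2k}\to1$ for every $k\ge1$ and $n^r\beta_{2k_1,\ldots,2k_r}\to0$ for $r\ge2$. The natural route is Lemma~\ref{lem:structure}, i.e.\ $|Y_{1k_1}|\cip1$, which gives $\sum_j Y_{1j}^{2k}\cip1$. Since this sum lies in $[0,1]$, bounded convergence yields $n\beta_{2k}=\E\sum_jY_{1j}^{2k}\to1$. For $r\ge2$, consider
\[
\sum_{j_1\ne\cdots\ne j_r}Y_{1j_1}^{2k_1}\cdots Y_{1j_r}^{2k_r}\le \sum_{j_1\neq j_2}Y_{1j_1}^2Y_{1j_2}^2=1-\sum_jY_{1j}^4\cip0 .
\]
This sum is also bounded by $1$, so its expectation tends to $0$. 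That expectation equals $n(n-1)\cdots(n-r+1)\beta_{2k_1,\ldots,2k_r}$, which gives the claim. Equivalently, one may simply use $n\beta_4\to1$ together with $n\beta_4+n(n-1)\beta_{2,2}=1$ and monotonicity.

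\emph{Plugging in.} Use the multinomial expansion from the proof of Theorem~\ref{Prop-quadratic-form}. Each term with $r\ge2$ has the form
\[
\Big(n^{-r}\sum a_{i_1i_1}^{\ell_1}\cdots a_{i_ri_r}^{\ell_r}\Big)\, n^r\beta_{2\ell_1,\ldots,2\ell_r} .
\]
The first factor is bounded by $(M+1)^\ell$ and the second tends to $0$, so every $r\ge2$ term vanishes. The $r=1$ term is $\big(n^{-1}\sum_i a_{ii}^\ell\big)\,n\beta_{2\ell}$. Its second factor tends to $1$, and its first factor tends to $\int x^\ell\nu(dx)$ by weak convergence and boundedness of the diagonal. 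Hence $\E[Q_{n,1}^\ell]\to\int x^\ell d\nu$. Since $|Q_{n,1}|\le M+1$ eventually, Carleman's condition holds and the moment method gives $Q_{n,1}\cid\nu$.

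\emph{Remarks.} As a cross-check, the result agrees formally with the $\alpha=0$ case of \eqref{moment} and with Proposition~\ref{prop:munualpha_examples}(ii). There is also a more direct proof: $Q_{n,1}-a_{k_1k_1}\to0$ in probability by Lemma~\ref{lem:structure}, and $k_1$ is uniform on $\{1,\ldots,n\}$ by exchangeability and independent of $\A_n$, so $a_{k_1k_1}$ has law $n^{-1}\sum_i\delta_{a_{ii}}\cw\nu$. The step needing most care is the vanishing of the mixed moments for $r\ge2$. I expect this to follow cleanly from the bound $\sum_{j\neq k}Y_{1j}^2Y_{1k}^2=1-\sum_jY_{1j}^4$ combined with $n\beta_4\to1$.
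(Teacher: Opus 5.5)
Your proof is correct, but your main argument takes a different route from the paper. The paper's proof is precisely the ``more direct'' argument in your final remark. Lemma~\ref{lem:structure} together with $M<\infty$ gives $Q_{n,1}=a_{k_1k_1}+o_{\P}(1)$; indeed $|Q_{n,1}-a_{k_1k_1}|\le 2\max_i|a_{ii}|\,(1-Y_{1k_1}^2)$. Since $k_1$ is uniform on $\{1,\ldots,n\}$ and independent of $\A_n$, we get $\E[g(a_{k_1k_1})]=n^{-1}\sum_i g(a_{ii})\to\int g\,d\nu$ for $g\in\mathcal{C}_b(\R)$, and Slutsky's theorem concludes.

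Your method-of-moments proof first establishes the $\alpha=0$ analogue of Lemma~\ref{lem:allmoments}: $n\beta_{2k}\to1$, and $n^r\beta_{2k_1,\ldots,2k_r}\to0$ for $r\ge2$. Formally, this is \eqref{moment} with the factor $(\alpha/2)^{r-1}$ killing every $r\ge2$ term. Your bound on the mixed sums via $Y_{1j}^{2k}\le Y_{1j}^2$ and $\sum_jY_{1j}^2=1$ is valid. The rest is the expansion from Theorem~\ref{Prop-quadratic-form}, in which only the $r=1$ term survives.

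Your route buys a stronger conclusion, namely convergence of all moments $\E[Q_{n,1}^\ell]\to\int x^\ell\nu(dx)$. It also makes the consistency with Proposition~\ref{prop:munualpha_examples}(ii) visible at the level of the moment formula $m_\ell$ rather than the Stieltjes transform. The paper's route is shorter and never needs moment determinacy of $\nu$. It also exposes the one-big-jump mechanism directly: the quadratic form simply samples a uniformly random diagonal entry. Finally, it extends with essentially no change to unbounded diagonals satisfying \eqref{eq:unbounded}, by truncating at level $T$ inside the error bound. Your approach would need an additional approximation step, as in Theorem~\ref{thm:qf-unbounded}, since the moments of $Q_{n,1}$ need not converge there.
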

\begin{proof}
	An application of Lemma~\ref{lem:structure} and using $M<\infty$ shows that
	\begin{align*}
		Q_{n,1}&=\y_1^\top \diag(\A_n) \y_1 = \e_{k_1}^\top \diag(\A_n) \e_{k_1} + o_{\P}(1)
	\end{align*}
	Note that $\e_{k_1}^\top \diag(\A_n) \e_{k_1}=a_{k_1k_1}$ and by construction, $k_1$ is uniformly distributed on $\{1,\ldots,n\}$.
	For a function $g\in \ml{C}_b(\R)$ we have
	$$\E[g(a_{k_1k_1})]=\frac{1}{n} \sum_{i=1}^n g(a_{ii}) \to \int g\, \nu(dx)\,, \qquad \nto\,,$$
	establishing the desired result in view of Slutsky's theorem.
\end{proof}
The construction in the proof shows how the limiting measure $\nu$ appears for $Q_{n,1}$. It is worth noting that Proposition~\ref{prop:extension} is consistent with Proposition~\ref{prop:munualpha_examples} (ii).

In view of Lemma~\ref{lem:structure}, it is natural to ask what happens if the vectors $\y_i$ in $\bfR_n=\Y_n \Y_n^\top$ are replaced by $\sign(Y_{ik_i}) \e_{k_i}$.
Next, we study the spectral distribution of the matrix $\widetilde{\bfR}_n=\bfZ_n\bfZ_n^\top$, where $\bfZ_n^\top$ is an $n\times p$ matrix, whose columns $\bfz_1,\ldots,\bfz_p$ are given by $\bfz_i:=\sign(Y_{ik_i}) \e_{k_i}$

\begin{proposition}\label{prop:Rtilde}
	Let $\widetilde{\bfR}_n$ be as defined above, then as $p,n\rw\infty,\ p/n\rw\gamma\in(0,+\infty)$, $F^{\widetilde{\bfR}_n}$ converges weakly in probability to the zero-inflated Poisson distribution $H_{0,\gamma}$ defined by \eqref{eq:zeroinflated}.
\end{proposition}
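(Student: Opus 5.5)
The plan is to reduce $\widetilde{\bfR}_n$ to a diagonal matrix of occupation numbers. Since $\bfz_i=\sign(Y_{ik_i})\e_{k_i}$, we have
\begin{equation*}
\bfZ_n^\top\bfZ_n=\sum_{i=1}^p \bfz_i\bfz_i^\top=\sum_{i=1}^p \e_{k_i}\e_{k_i}^\top=\diag(N_1,\ldots,N_n),\qquad N_k:=\#\{i\le p:\ k_i=k\}.
\end{equation*}
The signs disappear because $\sign(Y_{ik_i})^2=1$. The $p\times p$ matrix $\bfZ_n\bfZ_n^\top$ and the $n\times n$ matrix $\bfZ_n^\top\bfZ_n$ share their nonzero eigenvalues, and the remaining eigenvalues are zero. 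Hence for every $m\in\N$,
\begin{equation*}
F^{\widetilde{\bfR}_n}(\{m\})=\frac{1}{p}\#\{k\le n:\ N_k=m\},
\end{equation*}
and $F^{\widetilde{\bfR}_n}$ puts the remaining mass on $0$. The whole problem therefore becomes a balls-in-boxes occupancy problem.

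\textbf{Distribution of the $k_i$.} The rows $(X_{i1},\ldots,X_{in})$, $i=1,\ldots,p$, are independent, so $k_1,\ldots,k_p$ are independent. Within a row the entries are iid, so by exchangeability each $k_i$ is uniform on $\{1,\ldots,n\}$. This is the same fact already used in the proof of Proposition~\ref{prop:extension}. If ties in $|Y_{ik}|$ have positive probability, I would fix $k_i$ by a uniformly random tie-break (or by an independent uniform permutation), which preserves both uniformity and independence. Consequently $(N_1,\ldots,N_n)$ is multinomial with $p$ trials and cell probabilities $1/n$.

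\textbf{First and second moments.} Set $S_{n,m}:=n^{-1}\#\{k:\ N_k=m\}$. Then
\begin{equation*}
\E[S_{n,m}]=\P(N_1=m)=\binom{p}{m}n^{-m}(1-1/n)^{p-m}\to e^{-\gamma}\frac{\gamma^m}{m!},
\end{equation*}
which is the binomial-to-Poisson limit with $p/n\to\gamma$. For the variance,
\begin{equation*}
\Var(S_{n,m})=n^{-1}\Var\big(\1(N_1=m)\big)+(1-n^{-1})\,\mathrm{Cov}\big(\1(N_1=m),\1(N_2=m)\big).
\end{equation*}
The first term is at most $n^{-1}$. For the second, I would compute $\P(N_1=N_2=m)$ explicitly from the multinomial law and check that it converges to $(e^{-\gamma}\gamma^m/m!)^2$. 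Then $\Var(S_{n,m})\to0$, and Chebyshev's inequality gives $S_{n,m}\cip e^{-\gamma}\gamma^m/m!$. Multiplying by $n/p\to\gamma^{-1}$ yields $F^{\widetilde{\bfR}_n}(\{m\})\cip \gamma^{-1}e^{-\gamma}\gamma^m/m!$ for each $m\ge1$.

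\textbf{Conclusion.} The limiting masses on $\N$ sum to $\gamma^{-1}(1-e^{-\gamma})$. For the mass at zero, note that $\sum_{m\ge1} mN$-counts give $\sum_{m\ge1} m\,\#\{k:N_k=m\}=p$, so $\sum_{m\ge1} m\,F^{\widetilde{\bfR}_n}(\{m\})=1$. Hence the masses on $\N$ are tight: Fatou gives the lower bound, and the identity above together with $\sum_m m\,\gamma^{-1}e^{-\gamma}\gamma^m/m!=1$ gives the matching upper bound. This yields $F^{\widetilde{\bfR}_n}(\{0\})\cip 1-\gamma^{-1}+\gamma^{-1}e^{-\gamma}$. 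All measures involved are supported on $\N_0$, so convergence in probability of every atom, together with this tightness, gives weak convergence in probability to $H_{0,\gamma}$ in \eqref{eq:zeroinflated}.

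\textbf{Main obstacle.} The only genuinely delicate points are the covariance estimate between occupancy indicators and the justification that the $k_i$ are iid uniform when ties can occur. The former is a direct multinomial computation, and the latter is handled by the random tie-break described above.
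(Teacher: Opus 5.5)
Your proposal is correct and follows essentially the same route as the paper: reduce to the diagonal multinomial matrix $\bfZ_n^\top\bfZ_n$, control the mean and variance of the occupancy frequencies and apply Chebyshev, then pass to $\widetilde{\bfR}_n$ through the shared nonzero spectrum. The differences are minor. The paper obtains the atom at $0$ directly by running the same mean/variance computation with $m=0$ (giving $F^{\bfT_n}(\{0\})\cip e^{-\gamma}$), which is shorter than your tightness argument via $\sum_{m\ge1} m\,F^{\widetilde{\bfR}_n}(\{m\})=1$, while your remarks that the signs cancel in $\bfz_i\bfz_i^\top$ and on tie-breaking make explicit points the paper leaves implicit.
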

\begin{proof}
	Recalling $\xi \eid - \xi$, we observe that $\bfz_1,\ldots,\bfz_p$ are i.i.d. and uniformly at random drawn from the set
	$$
	\ml{K}_n=\{\pm \e_1, \ldots, \pm \e_n\}\subset \R^n.
	$$ Let
	$\bfT_n:=\bfZ_n^\top\bfZ_n=\sum_{i=1}^p \bfz_i \bfz_i^\top$. Then $\bfT_n=(T_{ij})$ is an $n\times n$ diagonal matrix, whose diagonal elements $(T_{11},\ldots, T_{nn})$ follow a multinomial distribution with probability mass function
	$$\P(T_{11}=t_1, \ldots, T_{nn}=t_n)= \frac{p!}{t_1! \cdots t_n!} \Big(\frac{1}{n}\Big)^p \qquad \text{ for } \quad t_1+\cdots+t_n=p\,.$$
	
	Thus, $F^{\bfT_n}(x):=n^{-1} \sum_{j=1}^n \1(\lambda_i(\bfT_n)\le x)=n^{-1} \sum_{j=1}^n \1(T_{jj}\le x)$ is a random discrete probability distribution with support $\N_0$, where
	$$
	F^{\bfT_n}(\{k\})=\frac{1}{n}\sum_{j=1}^n \1(T_{jj}=k),\qquad k\in \N_0.
	$$
	Then we have
	\begin{align*}
	\E[F^{\bfT_n}(\{k\})]=\P(T_{11}=k)=\binom{p}{k}\l(\frac{1}{n}\r)^k\l(1-\frac{1}{n}\r)^{p-k}\rw e^{-\gamma}\frac{\gamma^k}{k!}, \qquad \nto\,,
	\end{align*}
	and
	\begin{align*}
	\Var[F^{\bfT_n}(\{k\})]&=\frac{1}{n^2}\l[n\P(T_{11}=k)^2+n(n-1)\P(T_{11}=k,T_{22}=k)\r]-[\P(T_{11}=k)]^2\\
	&=\binom{p}{k}\binom{p-k}{k}\l(\frac{1}{n}\r)^{2k}\l(1-\frac{2}{n}\r)^{p-2k}-\l[\binom{p}{k}\l(\frac{1}{n}\r)^k\l(1-\frac{1}{n}\r)^{p-k}\r]^2+o(1)\\
	&\rw 0\,, \qquad \nto.
	\end{align*}
	Then by Chebyshev inequality, $F^{\bfT_n}(\{k\})\cip e^{-\gamma}\frac{\gamma^k}{k!}$ as $n\rw\infty$. That is to say, the LSD of $\bfT_n$ is $\operatorname{Poi}(\gamma)$. Since $\bfZ_n\bfZ_n^\top$ and $\bfZ_n^\top\bfZ_n$ have the same non-zero eigenvalues with the same multiplicities, we can use the relation
	$F^{\widetilde{\bfR}_n}=\frac{n}{p}F^{\bfT_n}+\l(1-\frac{n}{p}\r)\delta_0$
	to obtain for $\nto$
	$$
	F^{\widetilde{\bfR}_n}(\{0\})=\frac{n}{p}F^{\bfT_n}(\{0\})+\l(1-\frac{n}{p}\r)\cip\gamma^{-1}e^{-\gamma}+1-\gamma^{-1}
	$$
	and
	$$
	F^{\widetilde{\bfR}_n}(\{k\})=\frac{n}{p}F^{\bfT_n}(\{k\})\cip\gamma^{-1}e^{-\gamma}\frac{\gamma^k}{k!},
	$$
	Hence $F^{\widetilde{\bfR}_n}$ converges weakly in probability to $H_{0,\gamma}$.
\end{proof}
Now we are ready to derive the LSD of $\bfR_n$ in the slowly varying case.
\begin{theorem}\label{thm:R}
	If $\xi$ is slowly varying, then $F^{\bfR_n}$ converges weakly in probability to the zero-inflated Poisson distribution $H_{0,\gamma}$ defined by \eqref{eq:zeroinflated}.
\end{theorem}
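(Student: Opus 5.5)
The plan is to compare $\bfR_n=\Y_n\Y_n^\top$ with $\widetilde{\bfR}_n=\bfZ_n\bfZ_n^\top$ from Proposition~\ref{prop:Rtilde}. Since $F^{\widetilde{\bfR}_n}$ already converges weakly in probability to $H_{0,\gamma}$, it suffices to show that the Lévy distance between $F^{\bfR_n}$ and $F^{\widetilde{\bfR}_n}$ tends to zero in probability. For this I will use the standard rank-free perturbation inequality (Bai--Silverstein, Corollary A.42):
\begin{equation*}
L^4\big(F^{\Y_n\Y_n^\top},F^{\bfZ_n\bfZ_n^\top}\big)\le \frac{2}{p^2}\,\tr\big(\Y_n\Y_n^\top+\bfZ_n\bfZ_n^\top\big)\,\tr\big((\Y_n-\bfZ_n)(\Y_n-\bfZ_n)^\top\big).
\end{equation*}

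The first trace is easy to control. Every row $\y_i$ and $\bfz_i$ has unit norm, so $\tr(\Y_n\Y_n^\top+\bfZ_n\bfZ_n^\top)=2p$. The bound therefore reduces to
\begin{equation*}
L^4\le \frac{4}{p}\sum_{i=1}^p \twonorm{\y_i-\bfz_i}^2 .
\end{equation*}

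For the second trace, Lemma~\ref{lem:structure} and its proof give $\twonorm{\y_i-\bfz_i}^2=2(1-|Y_{ik_i}|)$. This quantity is bounded by $2$ and tends to $0$ in probability, so by bounded convergence $\E[2(1-|Y_{1k_1}|)]\to 0$. The summands are i.i.d. across $i$, hence
\begin{equation*}
\E\Big[\frac1p\sum_{i=1}^p\twonorm{\y_i-\bfz_i}^2\Big]=\E\big[2(1-|Y_{1k_1}|)\big]\to 0.
\end{equation*}
Markov's inequality then yields $L(F^{\bfR_n},F^{\widetilde{\bfR}_n})\cip 0$. Combined with Proposition~\ref{prop:Rtilde}, this shows that $L(F^{\bfR_n},H_{0,\gamma})\cip 0$, which is the claim.

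I do not expect a real obstacle. The only point needing care is the choice of perturbation inequality: the Frobenius-type bound is needed rather than operator-norm control, since $\Y_n-\bfZ_n$ need not be small in operator norm uniformly over the rows. Using the normalized trace bound sidesteps this, because only the average squared row error has to vanish. A secondary detail is the measurability and tie-breaking of $k_i$. Ties occur with probability zero if $\xi$ is continuous, and otherwise any fixed tie-breaking rule preserves both the uniformity of $k_i$ used in Proposition~\ref{prop:Rtilde} and the bound from Lemma~\ref{lem:structure}.
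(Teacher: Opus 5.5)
Your proposal is correct and is essentially the paper's proof: you compare $\bfR_n$ with $\widetilde{\bfR}_n$ through the Bai--Silverstein $L^4$ perturbation bound, use $\tr(\bfR_n)=\tr(\widetilde{\bfR}_n)=p$ and $\|\Y_n-\bfZ_n\|_F^2=2\sum_{i=1}^p(1-|Y_{ik_i}|)$, and conclude by bounded convergence, Markov's inequality and Proposition~\ref{prop:Rtilde}. One small correction to your side remark: a deterministic tie-break (e.g.\ smallest maximizing index) does not keep $k_i$ exactly uniform, so you should use a uniformly random tie-break (the Frobenius bound does not care which maximizer you pick); in any case this is harmless, because a tie forces $|Y_{ik_i}|\le 1/\sqrt{2}$ and therefore has vanishing probability by Lemma~\ref{lem:structure}.
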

\begin{proof}
	We notice from the definition of $\bfZ_n$ that $\diag(\bfR_n)=\diag(\widetilde{\bfR}_n)$ almost surely.
	By \cite[Corollary A.41]{bai:silverstein:2010}, the \Levy distance between the ESDs of $\bfR_n$ and $\widetilde{\bfR}_n$ converges to zero (in probability) if 
	\begin{equation}\label{eq:sdgd43}
		\frac{1}{p^2} \big(\tr(\bfR_n)+\tr(\widetilde{\bfR}_n) \big) \| \Y_n-\bfZ_n \|_F^2 \cip 0
	\end{equation}
	Using the definition of $\bfz_i$, we get
	\begin{align*}
		\frac{1}{p} \| \Y_n-\bfZ_n \|_F^2 &= \frac{1}{p} \sum_{i=1}^p \sum_{t=1}^n \big(Y_{it}-\sign(Y_{ik_i}) \1(t=k_i)\big)^2\\
		&=\frac{1}{p} \sum_{i=1}^p \left( 1- Y_{ik_i}^2 + (Y_{ik_i}-\sign(Y_{ik_i}))^2\right)\\
		&= \frac{2}{p}\Big( p-\sum_{i=1}^p |Y_{ik_i}|\Big).
	\end{align*}
	Therefore, we have 
	\begin{align*}
		\frac{1}{p} \E\big[\| \Y_n-\bfZ_n \|_F^2\big] &= 
		2-2 \E[|Y_{1k_1}|]\to 0\,, \qquad \nto,
	\end{align*}
	where for the last step we used $|Y_{1k_1}|\cip 1$ from the proof of Lemma~\ref{lem:structure}.
	Using Markov's inequality and the fact that $\tr(\bfR_n)=\tr(\widetilde{\bfR}_n)=p$, we conclude \eqref{eq:sdgd43}. Then Proposition~\ref{prop:Rtilde} finishes the proof.
\end{proof}

\section{Additional examples}\label{sec:additional_examples}

\jh{We give another deterministic example where the off-diagonal part contributes
and the limiting moments exist. 
\begin{example}
For fixed $\tau\in(0,1)$, set 
$m=\lfloor \tau n\rfloor$ and define
\[
	I_n=\{1,\ldots,m\},\qquad J_n=\{m+1,\ldots,n\}.
\]
Let $\A_n$ be the adjacency matrix of the complete bipartite graph with
vertex classes $I_n$ and $J_n$, that is,
\[
	a_{ij}^{(n)}
	=
	\begin{cases}
		1, &  i\in I_n,\ j\in J_n\ \text{or}\ i\in J_n,\ j\in I_n,\\
		0, & \text{otherwise}.
	\end{cases}
\]
Since $I_n$ and $J_n$ are disjoint, it follows $a_{ii}^{(n)}=0$ and
\[
	n^{-2}\|\A_n-\operatorname{\diag}(\A_n)\|_{\operatorname{F}}^2
	=
	\frac{2m(n-m)}{n^2}
	\rw 2\tau(1-\tau)>0,
\]
so Assumption \ref{ass:offdiag} fails. We now show that 
\[
	Q_{n,2}
	=
	2\left(\sum_{i\in I_n}Y_{1i}\right)
	 \left(\sum_{j\in J_n}Y_{1j}\right)
\]
 nevertheless has finite limiting moments. Since
the distribution of $\y_1$ is symmetric, all odd moments of $Q_{n,2}$ are zero.
For $\ell\in\N$,
\[
	\E[Q_{n,2}^{2\ell}]
	=
	2^{2\ell}
	\E\left[
		\left(\sum_{i\in I_n}Y_{1i}\right)^{2\ell}
		\left(\sum_{j\in J_n}Y_{1j}\right)^{2\ell}
	\right].
\]
Expanding both powers and using symmetry, only monomials in which every
coordinate appears with an even power contribute. Hence
\[
\begin{aligned}
	\frac{\E[Q_{n,2}^{2\ell}]}{2^{2\ell}}
	&=
	\sum_{r=1}^\ell\sum_{s=1}^\ell
	\frac{(2\ell)!^2}{r!s!}
	\sum_{\substack{k_1,\ldots,k_r\ge1\\ k_1+\cdots+k_r=\ell}}
	\sum_{\substack{h_1,\ldots,h_s\ge1\\ h_1+\cdots+h_s=\ell}}
	\frac{(m)_r(n-m)_s}
	{\prod_{a=1}^r(2k_a)!\prod_{b=1}^s(2h_b)!}  \beta_{2k_1,\ldots,2k_r,2h_1,\ldots,2h_s},
\end{aligned}
\]
where $(x)_r=x(x-1)\cdots(x-r+1)$. By Lemma~\ref{lem:allmoments},
$n^{r+s}\beta_{2k_1,\ldots,2k_r,2h_1,\ldots,2h_s}$
has a finite limit, while
\[
	\frac{(m)_r(n-m)_s}{n^{r+s}}
	\rw
	\tau^r(1-\tau)^s.
\]
Since the summations are finite for fixed $\ell$, the limit
\[
	q_{2\ell}:=\lim_{\nto}\E[Q_{n,2}^{2\ell}]
\]
exists and is finite. Moreover, the second moment is 
\[
	\E[Q_{n,2}^2]
	=
	4m(n-m)\beta_{2,2}
	\rw
	2\alpha \tau(1-\tau)>0,
\]
where we used $n^2\beta_{2,2}\to \alpha/2$. 

It remains to justify that the limiting even moments $(q_{2\ell})_{\ell\ge 1}$ determine a probability law.
We first derive upper bounds for $q_{2\ell}$. Since
\[
	(2k)!\ge 2^k k!,\qquad k\ge 1,
\]
we have
\[
	\prod_{a=1}^r(2k_a)!\prod_{b=1}^s(2h_b)!
	\ge
	2^{2\ell}\prod_{a=1}^r k_a!\prod_{b=1}^s h_b!,
\]
because \(k_1+\cdots+k_r=h_1+\cdots+h_s=\ell\). Hence, from the expression
for \(\E[Q_{n,2}^{2\ell}]\),
\[
\begin{aligned}
	\E[Q_{n,2}^{2\ell}]
	&\le
	(2\ell)!^2
	\sum_{r=1}^\ell\sum_{s=1}^\ell
	\binom{m}{r}\binom{n-m}{s}
	\sum_{\substack{k_1,\ldots,k_r\ge1\\ k_1+\cdots+k_r=\ell}}
	\sum_{\substack{h_1,\ldots,h_s\ge1\\ h_1+\cdots+h_s=\ell}}
	\frac{
		\beta_{2k_1,\ldots,2k_r,2h_1,\ldots,2h_s}
	}{
		\prod_{a=1}^r k_a!\prod_{b=1}^s h_b!
	}.
\end{aligned}
\]
The last sum has a simple interpretation. Indeed, expanding powers of the
squared coordinates gives
\[
\begin{aligned}
	&\sum_{r=1}^\ell\sum_{s=1}^\ell
	\binom{m}{r}\binom{n-m}{s}
	\sum_{\substack{k_1,\ldots,k_r\ge1\\ k_1+\cdots+k_r=\ell}}
	\sum_{\substack{h_1,\ldots,h_s\ge1\\ h_1+\cdots+h_s=\ell}}
	\frac{
		\beta_{2k_1,\ldots,2k_r,2h_1,\ldots,2h_s}
	}{
		\prod_{a=1}^r k_a!\prod_{b=1}^s h_b!
	}  \\
	&\hspace{2cm}
	=
	\frac{1}{(\ell!)^2}
	\E\left[
		\left(\sum_{i\in I_n}Y_{1i}^2\right)^\ell
		\left(\sum_{j\in J_n}Y_{1j}^2\right)^\ell
	\right].
\end{aligned}
\]
Since $\sum_{i\in I_n}Y_{1i}^2+\sum_{j\in J_n}Y_{1j}^2=1$,
we have
\[
	\left(\sum_{i\in I_n}Y_{1i}^2\right)
	\left(\sum_{j\in J_n}Y_{1j}^2\right)
	\le \frac14.
\]
Consequently,
\[
	\E[Q_{n,2}^{2\ell}]
	\le
	\frac{(2\ell)!^2}{(\ell!)^2}\left(\frac14\right)^\ell
	=
	\big((2\ell-1)!!\big)^2.
\]
Passing to the limit gives $q_{2\ell}
	\le
	\big((2\ell-1)!!\big)^2$. 
Therefore, by Stirling's formula,
\[
	q_{2\ell}^{-1/(2\ell)}
	\ge
	\big((2\ell-1)!!\big)^{-1/\ell}=\l(\frac{(2\ell)!}{2^\ell \ell!}\r)^{-1/\ell}
	\sim \frac{e}{2\ell}, \qquad  \ell \to \infty,
\]
and hence
$\sum_{\ell=1}^\infty q_{2\ell}^{-1/(2\ell)}=\infty$.
Thus the limiting moment sequence satisfies Carleman's condition.
Then $Q_{n,2}$ converges in distribution to the unique
moment-determinate law with these moments. Thus this deterministic complete
bipartite example violates the Frobenius-norm condition, has a non-degenerate
off-diagonal limit, and the limiting law is still characterized by its
moments.
\end{example}

\medskip

\begin{example}
We give another example where the off-diagonal part contributes and all
limiting moments exist. In this example the matrix $\mf{A}_n$ is taken to be the adjacency matrix of an Erdös--Rényi graph. Fix $\rho\in(0,1]$, and let
$(Z_{ij})_{1\le i<j\le n}$ be iid random variables with distribution
\[
	\P(Z_{ij}=1)=\P(Z_{ij}=-1)=\frac{\rho}{2},
	\qquad
	\P(Z_{ij}=0)=1-\rho .
\]
Assume that $(Z_{ij})_{1\le i<j\le n}$ is independent of $\y_1$, and define
\[
	a_{ii}^{(n)}=0,\qquad
	a_{ij}^{(n)}=a_{ji}^{(n)}=Z_{ij},\qquad 1\le i<j\le n.
\]
It is easy to calculate that
\[
	n^{-2}\E\|\A_n-\operatorname{\diag}(\A_n)\|_{\operatorname{F}}^2
	=
	n^{-2}\,2\sum_{1\le i<j\le n}\E Z_{ij}^2
	=
	\rho\,\frac{n(n-1)}{n^2}
	\rw \rho .
\]
Thus, Assumption \ref{ass:offdiag} fails whenever
$\rho>0$.

Since the distribution of $Z_{ij}$ is symmetric, all odd moments of
\[
	Q_{n,2}
	=
	2\sum_{1\le i<j\le n}Z_{ij}Y_{1i}Y_{1j} 
\]
are zero.
We now compute the even moments. Let
\[
	\mathcal E_n=\{(i,j):1\le i<j\le n\}.
\]
For a family $\mathbf h=(h_{ij})_{(i,j)\in\mathcal E_n}$ of nonnegative
integers, write
\[
	|\mathbf h|=\sum_{(i,j)\in\mathcal E_n}h_{ij}
	\qquad \text{ and } \qquad 
	d_i(\mathbf h)=\sum_{j=0}^{i-1} h_{ji}+\sum_{j=i+1}^n h_{ij}.
\]
By the multinomial theorem,
\[
\begin{aligned}
	Q_{n,2}^{2\ell}
	&=
	2^{2\ell}
	\sum_{\substack{(k_{ij})_{(i,j)\in\mathcal E_n}\\
	\sum_{i<j}k_{ij}=2\ell}}
	\frac{(2\ell)!}{\prod_{i<j}k_{ij}!}
	\prod_{i<j}\left(Z_{ij}Y_{1i}Y_{1j}\right)^{k_{ij}} .
\end{aligned}
\]
Taking expectation and using the independence and symmetry of the $Z_{ij}$'s,
all terms with some odd $k_{ij}$ vanish. Thus only terms of the form
$k_{ij}=2h_{ij}$ remain. Since
\[
	\E Z_{ij}^{2h}
	=
	\begin{cases}
		1, & h=0,\\
		\rho, & h\ge 1,
	\end{cases}
\]
we obtain
\[
\begin{aligned}
	\E[Q_{n,2}^{2\ell}]
	&=
	2^{2\ell}(2\ell)!
	\sum_{\substack{\mathbf h:\ |\mathbf h|=\ell}}
	\prod_{1\le i<j\le n}
	\frac{\rho^{\1_{\{h_{ij}>0\}}}}{(2h_{ij})!}
	\,
	\E\left[
		\prod_{i=1}^n Y_{1i}^{2d_i(\mathbf h)}
	\right].
\end{aligned}
\]
Here $h_{ij}$ records half of the multiplicity with which the edge $(i,j)$
appears in the expansion, and $d_i(\mathbf h)$ is the corresponding weighted
degree of vertex $i$.

Although the number of terms in the last sum grows with $n$, the sum has a
finite limit for every fixed $\ell$. To see this, group the terms according to
their weighted graph type. A weighted graph type $H$ is determined by the
finite graph formed by the edges with $h_{ij}>0$, together with their positive
weights. Since $|\mathbf h|=\ell$, each such graph has at most $\ell$ edges and at
most $2\ell$ vertices. Hence, for fixed $\ell$, only finitely many weighted graph
types can occur.

Fix one such type $H$. Suppose that $H$ has $r$ vertices, $s$ edges, edge
weights summing to $\ell$, and weighted vertex degrees
$d_1,\ldots,d_r$. The total contribution of all embeddings of this type into
$\{1,\ldots,n\}$ is of the form
\[
	C_H\,N_H(n)\,\beta_{2d_1,\ldots,2d_r},
\]
where $C_H$ depends only on $H$, $\ell$ and $\rho$, and where $N_H(n)$ denotes the
number of embeddings of $H$ into $\{1,\ldots,n\}$. In particular,
\[
	\frac{N_H(n)}{n^r}\rw c_H
\]
for some constant $c_H\ge0$. By Lemma~\ref{lem:allmoments},
$n^r\beta_{2d_1,\ldots,2d_r}$
has a finite limit and so does
\[
	N_H(n)\,\beta_{2d_1,\ldots,2d_r}\,.
\]
Since only finitely many weighted graph types occur for
fixed $\ell$, we conclude that
\[
	q_{2\ell}:=\lim_{\nto}\E[Q_{n,2}^{2\ell}] \in (0,\infty)\,.
\]
In particular,
\[
	\E[Q_{n,2}^2]
	=
	4\sum_{1\le i<j\le n}\E Z_{ij}^2\,\E[Y_{1i}^2Y_{1j}^2]
	=
	4\rho\binom n2\beta_{2,2}
	\rw
	\rho\alpha,
\]
where we used $n^2\beta_{2,2}\to\alpha/2$. 

It remains to show that the limiting moments determine a probability law. This
follows from a direct bound using the same expansion. Conditionally on $\y_1$,
we have
\[
\begin{aligned}
	\E[Q_{n,2}^{2\ell}\mid \y_1]
	&=
	2^{2\ell}(2\ell)!
	\sum_{\substack{\mathbf h:\ |\mathbf h|=\ell}}
	\prod_{1\le i<j\le n}
	\frac{\rho^{\mathbf 1_{\{h_{ij}>0\}}}
	(Y_{1i}^2Y_{1j}^2)^{h_{ij}}}{(2h_{ij})!}  \\
	&\le
	2^{2\ell}(2\ell)!
	\sum_{\substack{\mathbf h:\ |\mathbf h|=\ell}}
	\prod_{1\le i<j\le n}
	\frac{(Y_{1i}^2Y_{1j}^2)^{h_{ij}}}{(2h_{ij})!}.
\end{aligned}
\]
Using $(2k)!\ge 2^k k!$ and $|\mathbf h|=\ell$, we get
\[
	\prod_{1\le i<j\le n}(2h_{ij})!
	\ge
	2^\ell\prod_{1\le i<j\le n}h_{ij}!,
\]
and hence
\[
\begin{aligned}
	\E[Q_{n,2}^{2\ell}\mid \y_1]
	&\le
	2^\ell(2\ell)!
	\sum_{\substack{\mathbf h:\ |\mathbf h|=\ell}}
	\prod_{1\le i<j\le n}
	\frac{(Y_{1i}^2Y_{1j}^2)^{h_{ij}}}{h_{ij}!}  \le
	2^\ell\frac{(2\ell)!}{\ell!}
	\left(\sum_{1\le i<j\le n}Y_{1i}^2Y_{1j}^2\right)^\ell .
\end{aligned}
\]
Since $\sum_iY_{1i}^2=1$, we conclude by
taking expectation over $\y_1$ and passing to the limit that
\[
	q_{2\ell}\le 2^\ell\frac{(2\ell)!}{\ell!}
    \le (4\ell)^\ell.
\]
%We obtain
%\[
	%q_{2\ell}^{-1/(2\ell)} \ge (2\ell)^{-1/2}.
%\]
Consequently,
\[
	\sum_{\ell=1}^\infty q_{2\ell}^{-1/(2\ell)}=\infty,
\]
so Carleman's condition holds. Therefore, $Q_{n,2}$ converges in distribution to
the unique moment-determinate law with these moments.\end{example}
}

%%%%%%%%%%%%%%%%%%%%%%%%%%%%%%%%%%%%%%%%%%%%%%%%%%%%%%%%%%%
%%%%%%%%%%%%%%%%%%%%%%%%%%%%%%%%%%%%%%%%%%%%%%%%%%%%%%%%%%%
\section{Additional proofs}

\subsection{Verification of Example~\ref{ex:bounded}}\label{sec:verification-ex-bounded}
\begin{enumerate}
	\item[$(i)$] 
	To verify that the density integrates to one,
	let $\beta=\alpha/2$, with change of variable $x=t/(1+t)$ and $u=t^\beta$, we have
	\bea\label{eq-density-integral}
	\int_0^1 f_{\nu,\alpha}(x) dx&=\frac{\sin\l(\frac{\alpha\pi}{2}\r)}{\pi}\int_0^{+\infty} \frac{t^{\frac{\alpha}{2}-1}}{t^\alpha+2t^{\frac{\alpha}{2}}\cos\l(\frac{\alpha\pi}{2}\r)+1}dt\\
	&=\frac{\sin(\beta\pi)}{\beta\pi}\int_0^{+\infty}\frac{du}{u^2+1+2u\cos(\beta\pi)}=1.
	\eea
	Here in the last equality we use that for $\theta\in (0,\pi)$,
	\begin{align*}
		\int_0^{+\infty}\frac{du}{u^2+2u\cos\theta+1}&=\int_0^{+\infty}\frac{du}{(u+\cos\theta)^2+\sin^2\theta}\\
		&=\frac{1}{\sin\theta}\l[\arctan\l(\frac{u+\cos\theta}{\sin\theta}\r)\r]_0^{+\infty}=\frac{\theta}{\sin\theta}.
	\end{align*}
	
	To verify that $\mu_{\nu,\alpha}\cw\frac{1}{2}\delta_0+\frac{1}{2}\delta_1$ as $\alpha\downarrow0$ we proceed similarly to \eqref{eq-density-integral}. Setting $\theta=\beta\pi$ we get for (small) $\delta>0$ that
	\bea\nonumber
	\int_0^\delta f_{\nu,\alpha}(x) dx&=\frac{\sin\l(\frac{\alpha\pi}{2}\r)}{\pi}\int_0^{\delta/(1-\delta)} \frac{t^{\frac{\alpha}{2}-1}}{t^\alpha+2t^{\frac{\alpha}{2}}\cos\l(\frac{\alpha\pi}{2}\r)+1}dt\\&=\frac{\sin(\beta\pi)}{\beta\pi}\int_0^{\delta^\alpha/(1-\delta)^\alpha}\frac{du}{u^2+1+2u\cos(\beta\pi)}\\
	&\sim \frac{\sin(\beta\pi)}{\beta\pi}\int_0^{1}\frac{du}{u^2+1+2u\cos(\beta\pi)}\\
	&=\frac{1}{\theta}\l[\arctan\l(\frac{u+\cos\theta}{\sin\theta}\r)\r]_0^{1}=\frac{1}{2},    
	\eea
	where in the third line we let $\delta\to 0$ and in the last line we used
	$$
	\arctan\l(\frac{1+\cos\theta}{\sin\theta}\r)=\frac{\pi}{2}-\frac{\theta}{2}.
	$$
	Similarly,
	\bea\nonumber
	\int_{1-\delta}^1 f_{\nu,\alpha}(x) dx&=\frac{\sin(\beta\pi)}{\beta\pi}\int_{\delta^\alpha/(1-\delta)^\alpha}^{+\infty}\frac{du}{u^2+1+2u\cos(\beta\pi)}\\
	&\sim \frac{\sin(\beta\pi)}{\beta\pi}\int_1^{+\infty}\frac{du}{u^2+1+2u\cos(\beta\pi)}=\frac{1}{2}.
	\eea
	Therefore, it holds $\mu_{\nu,\alpha}\cw\frac{1}{2}\delta_0+\frac{1}{2}\delta_1$.
	
	To verify that $\mu_{\nu,\alpha}\cw\delta_{\frac{1}{2}}$ as $\alpha\uparrow2$,  let $X_\alpha$ have distribution $\mu_{\nu,\alpha}$ and consider 
	$S_\alpha=\beta\log\l(\frac{X_\alpha}{1-X_\alpha}\r)$.
	then the pdf of $S_\alpha$ is given by
	$$
	f_{S_\alpha}(s)=\frac{\sin\theta}{\theta}\cdot \frac{1}{e^s+e^{-s}+2\cos\theta},\quad s\in \R.
	$$
	Then for any $\eta>0$,
	$$
	\P(|S_\alpha|\geq\eta)\leq \frac{\sin\theta}{\theta}\int_{|s|\geq\eta}\frac{ds}{e^s+e^{-s}-2}\leq\frac{C_\eta\sin\theta}{\theta}\rw 0,\qquad \alpha\uparrow 2,
	$$
	since $\theta\uparrow\pi$. Thus, 
	$$
	S_\alpha\cip 0\implies X_\alpha\cip \frac{1}{2}\implies \mu_{\nu,\alpha}\cw \delta_{\frac{1}{2}}.
	$$
	\item[$(ii)$] The proof that $f_{\nu,\alpha}$ integrates to 1, and $\mu_{\nu,\alpha}\cw \delta_{\frac{1}{2}}$ as $\alpha\uparrow 2$, are similar to $(i)$ and therefore omitted. As $\alpha\downarrow0$, we have
	$$
	f_{\nu,\alpha}(x)\sim \frac{\sin(\alpha\pi/2)}{\alpha\pi/2}\cdot\frac{1}{x^2+(1-x)^2+2x(1-x)}\1_{(0,1)}(x)\sim \1_{(0,1)}(x).
	$$
	
\end{enumerate}

\subsection{Proof of Example~\ref{ex:unbounded}}

\begin{proof}[Proof of Example~\ref{ex:unbounded}]
	$(i)$\ For $\beta>-1,\ x>0$, we have
	$$
	\int_\R (x-y)_-^\beta \nu(dy)=\int_{0}^{+\infty}u^\beta e^{-(x+u)}du=e^{-x}\Gamma(1+\beta),
	$$
	\bea\nonumber
	\int_\R (x-y)_+^\beta \nu(dy)=e^{-x}\int_0^x u^\beta e^u du=e^{-x}\sum_{n=0}^\infty\frac{x^{\beta+n+1}}{n!(\beta+n+1)}=\frac{x^{\beta+1}e^{-x}}{\beta+1}\ {}_1F_1(\beta+1,\beta+2,x).
	\eea
	Here ${}_1F_1(a,b,x)$ is the Kummer's confluent hypergeometric function. By (13.2.23) in \cite{Book-Mathematical-Function}, except for $a=0,-1,-2,\cdots,$
	$$
	{_1F}_1(a,b,x)\sim\frac{\Gamma(b)}{\Gamma(a)}e^x x^{a-b},\qquad x\rw+\infty.
	$$
	Hence 
	$\int_\R (x-y)_+^\beta \nu(dy)\sim x^{\beta}$, as $x\to \infty$.
	By the density formula \eqref{Eq-Density-of-Quadratic-Form},
	\begin{align*}
		f(x)&\sim \frac{1}{\pi}\sin\l(\frac{\alpha\pi}{2}\r)\frac{x^{\frac{\alpha}{2}}e^{-x}\Gamma\l(\frac{\alpha}{2}\r)+x^{\frac{\alpha}{2}-1}e^{-x}\Gamma\l(1+\frac{\alpha}{2}\r)}{x^\alpha+e^{-2x}\Gamma\l(1+\frac{\alpha}{2}\r)^2+2\cos\l(\frac{\alpha\pi}{2}\r)x^{\frac{\alpha}{2}}e^{-x}\Gamma\l(1+\frac{\alpha}{2}\r)}\\
		&\sim \frac{1}{\pi}\sin\l(\frac{\alpha\pi}{2}\r)\Gamma\l(\frac{\alpha}{2}\r)x^{-\frac{\alpha}{2}}e^{-x}\\
		&= \frac{1}{\Gamma\l(1-\frac{\alpha}{2}\r)}x^{-\frac{\alpha}{2}}e^{-x}.
	\end{align*}
	$(ii)$\ For $\beta>-1,\ x>0,$ with a change of variable $y=\frac{x}{1-v}$,
	$$
	\int_\R(x-y)_-^\beta\nu(dy)=sx^{\beta-s}\int_0^1v^\beta(1-v)^{-\beta+s-1}dv=sx^{\beta-s}B(\beta+1,s-\beta).
	$$
	With a change of variable $y=x(1-t)$, by (8.17.7) in \cite{Book-Mathematical-Function},
	\bea\nonumber
	\int_\R(x-y)_+^\beta\nu(dy)&=sx^{\beta-s}\int_0^{1-x^{-1}}t^\beta(1-t)^{-s-1}dt\\&=\frac{sx^{\beta-s}}{\beta+1}(1-x^{-1})^{\beta+1}{}_2F_1(\beta+1,s+1;\beta+2;1-x^{-1}).
	\eea
	Here ${}_2F_1(a,b;c,x)$ is the Gaussian hypergeometric function. By (15.4.23) in \cite{Book-Mathematical-Function}, when $c-a-b<0$,
	$$
	\lim _{z \uparrow 1} \frac{{}_2F_1(a, b ; c ; z)}{(1-z)^{c-a-b}}=\frac{\Gamma(c) \Gamma(a+b-c)}{\Gamma(a) \Gamma(b)} .
	$$
	Then as $x\rw\infty$,
	$$
	\int_\R(x-y)_+^\beta\nu(dy)\sim x^{\beta}(1-x^{-1})^{\beta+1}\sim x^\beta.
	$$
	Then by the density formula we have \eqref{Eq-Quadratic-form-nu-pareto-tail-probability}.
\end{proof}

\subsection{Proof of Proposition~\ref{Prop-concentration-resolvant-diagonal}}

\begin{proof}[Proof of Proposition~\ref{Prop-concentration-resolvant-diagonal}]
	Let $\B_{n,k}(z)=(\Y_n^\top \Y_n-\y_k \y_k^\tp-z\bfI_n)^{-1}$ and set 
	$$L_n(f,z)=\frac{1}{n} \sum_{j=1}^n f\big( \B_n(z)_{jj} \big),\quad L_{n,k}(f,z)=\frac{1}{n}\sum_{j=1}^nf(\B_{n,k}(z)_{jj}).$$
	Let $\ml{F}_j$ be the $\sigma$-algebra generated by $\{\y_i:1\leq i\leq j\}$, and $\ml{F}_0=\{\emptyset,\Omega\}$ the trivial $\sigma$-algebra. Let $M_k=\E[L_n(f,z)|\ml{F}_k]$, then $\{M_k\}_{k=0}^p$ is a martingale with $M_0=\E[L_n(f,z)],\ M_p=L_n(f,z)$. Since $L_{n,k}(f,z)$ is independent of $\y_k$, we have
	$$
	\E[L_{n,k}(f,z)|\ml{F}_k]=\E[L_{n,k}(f,z)|\ml{F}_{k-1}].
	$$
	Therefore,
	$$
	|M_k-M_{k-1}|\leq |\E[L_n(f,z)-L_{n,k}(f,z)|\ml{F}_k]|+|\E[L_n(f,z)-L_{n,k}(f,z)|\ml{F}_{k-1}]|.
	$$
	and using the Lipschitz property of $f$ we get
	\begin{align*}
		|L_n(f,z)-L_{n,k}(f,z)|&\leq \frac{1}{n}\sum_{j=1}^n |f\big(\B_n(z)_{jj}\big)-f\big(\B_{n,k}(z)_{jj}\big)|\\
		&\leq \frac{L_f}{n}\sum_{j=1}^n |\B_n(z)_{jj}-\B_{n,k}(z)_{jj}|.
	\end{align*}
	{By virtue of the Sherman-Morrison formula, we have
		\bea\nonumber
		\B_n(z)-\B_{n,k}(z)=-\frac{\B_{n,k}(z)\y_k\y_k^\top \B_{n,k}(z)}{1+\y_k^\top \B_{n,k}(z)\y_k},
		\eea
		and consequently
		$$
		\B_n(z)_{jj}-\B_{n,k}(z)_{jj}=-\frac{(\B_{n,k}(z)\y_k)_j^2}{1+\y_k^\top \B_{n,k}(z)\y_k}.
		$$
		Then
		\bea\nonumber
		\sum_{j=1}^n|\B_n(z)_{jj}-\B_{n,k}(z)_{jj}|= \frac{\y_k^\tp \B_{n,k}(z)^*\B_{n,k}(z)\y_k}{|1+\y_k^\top \B_{n,k}(z)\y_k|}.
		\eea
		If $\Im z>0$, by 
		$$
		|1+\y_k^\top \B_{n,k}(z)\y_k|\geq \Im (\y_k^\top \B_{n,k}(z)\y_k)=\y_k^\top \Im(\B_{n,k}(z))\y_k=\y_k^\top ((\Im z)\B_{n,k}(z)^*\B_{n,k}(z))\y_k,
		$$
		we have
		$$
		\sum_{j=1}^n|\B_n(z)_{jj}-\B_{n,k}(z)_{jj}|\leq\frac{1}{\Im z}.
		$$
		If $\Im z=0,\ \Re z<0$, then $\B_{n,k}(z)\in \R^{n\times n}$. Write
		$
		\B_{n,k}(z)=\sum_{i=1}^n \frac{\mf{v}_i\mf{v}_i^{\tp}}{\lambda_i-z},
		$ where $\lambda_1\geq \cdots\geq \lambda_n\geq 0$ are eigenvalues of $\Y_n^\tp\Y_n-\y_k^\tp \y_k$.
		Then
		\bea\nonumber
		\sum_{j=1}^n|\B_n(z)_{jj}-\B_{n,k}(z)_{jj}|&=\frac{\y_k^\tp \B_{n,k}(z)^2 \y_k}{1+ \y_k^\tp \B_{n,k}(z) \y_k}=\frac{\sum_{i}\frac{(\y_k^\tp \mf{v}_i)^2}{(\lambda_i-z)^2}}{1+\sum_i\frac{(\y_k^\tp \mf{v}_i)^2}{\lambda_i-z}}\leq \frac{\frac{1}{\lambda_n-z}\sum_{i}\frac{(\y_k^\tp \mf{v}_i)^2}{\lambda_i-z}}{1+\sum_i\frac{(\y_k^\tp \mf{v}_i)^2}{\lambda_i-z}}\leq \frac{1}{|\Re z|}.
		\eea
		%Then from $|\y_k|=1$,
		%$$
		%|L_n(f,z)-L_{n,k}(f,z)|\leq \frac{L_f}{n}\sum_{j=1}^n \frac{\y_k\e_j\e_j^\top \y_k^\top}{\operatorname{dist}(z,\R^+)^2}=\frac{L_f}{n}\cdot\frac{1}{\operatorname{dist}(z,\R^+)^2}.
		%$$
		Then for $z\in \C\backslash[0,+\infty)$,
		$$
		\sum_{j=1}^n|\B_n(z)_{jj}-\B_{n,k}(z)_{jj}|\leq\frac{1}{\operatorname{dist}(z,\R^+)}.
		$$
		This implies
		$$
		|M_k-M_{k-1}|\leq \frac{2L_f}{n}\cdot\frac{1}{\operatorname{dist}(z,\R^+)}.
		$$
	}
	and by Azuma's inequality we finish the proof.
\end{proof}

%%%%%%%%%%%%%%%%%%%%%%%%%%%%%%%%%%%%%%%%%%%%%%
%% Multiple Appendixes:                     %%
%%%%%%%%%%%%%%%%%%%%%%%%%%%%%%%%%%%%%%%%%%%%%%
\begin{appendix}
\section{A Hanson--Wright inequality for vectors on the unit sphere}
Consider an $n$-dimensional random vector  
$\bfx = (X_1, \ldots, X_n)^{\top} \in \R^n$,
whose components $X_i$ are iid copies of some non-degenerate random variable $\xi$. For a matrix $\A\in \R^{n\times n}$, the quadratic form of the self-normalized random vector $\y=\x/\twonorm{\x}$ is defined as $Q_n(\y,\A)=\y^\top \A \y$.

Recall that the subgaussian norm of  a real-valued random variable $Z$ is defined as
\begin{align*}
	\norm{Z}_{\psi_2}:=\inf\{\nu>0 : \E [\psi_2(|Z|/\nu)]\leq 1\}\,,
\end{align*}
where $\psi_2(x)=\exp(x^2)-1$.

\begin{theorem}[Hanson--Wright inequality for $Q_n(\y,\A)$]\label{thm:HW-Qy}
	If $\xi$ is centered and sub-Gaussian, then there exist universal constants $C,c>0$ such that for any $n\in \mathbb{N}$ and $t>0$,
	\begin{equation}\label{eq:hw}
		\P\l(\Big|Q_n(\y,\A)-\E[Q_n(\y,\A)]\Big|\geq t\r)\leq C\exp\l(-c\min\l(\frac{nt^2}{K^4\|\A\|^2},\ \frac{n t}{K^2\|\A\|}\r)\r),
	\end{equation}
	where $K=\|\xi\|_{\psi_2}/\sqrt{\Var(\xi)}$.
\end{theorem}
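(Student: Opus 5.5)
Set $\sigma^2=\Var(\xi)$. By scale invariance of $\y$ we may assume $\sigma^2=1$, so that $K=\|\xi\|_{\psi_2}$. We may also assume $\A$ symmetric, since $Q_n(\y,\A)=Q_n(\y,(\A+\A^\top)/2)$ and symmetrization does not increase $\|\A\|$.

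The plan is to compare $Q_n(\y,\A)=\x^\top\A\x/\twonorm{\x}^2$ with $n^{-1}\x^\top\A\x$, and to control the discrepancy through the concentration of $\twonorm{\x}^2$ around $n$. We write
\begin{equation*}
Q_n(\y,\A)-\frac{\tr \A}{n}=\frac{\x^\top \A\x-\tr(\A)\,\twonorm{\x}^2/n}{\twonorm{\x}^2}=\frac{\x^\top \mathbf{B}\x}{\twonorm{\x}^2},\qquad \mathbf{B}:=\A-\frac{\tr \A}{n}\I_n .
\end{equation*}
The trace-free matrix $\mathbf{B}$ satisfies $\E[\x^\top\mathbf{B}\x]=\tr\mathbf{B}=0$, $\|\mathbf{B}\|\le 2\|\A\|$ and $\|\mathbf{B}\|_{\operatorname{F}}^2\le n\|\mathbf{B}\|^2\le 4n\|\A\|^2$. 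This centering has two advantages. First, no $n$-dependent mean has to be subtracted from the numerator. Second, the numerator is homogeneous of degree two, so the ratio is bounded by $\|\mathbf{B}\|$ deterministically.

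\textbf{Step 1.} Apply Theorem~\ref{thm:classical-HW} to $\x^\top\mathbf{B}\x$ at level $u=nt/2$:
\begin{equation*}
\P\bigl(|\x^\top\mathbf{B}\x|\ge nt/2\bigr)\le C\exp\Bigl[-c\min\Bigl(\frac{n^2t^2}{K^4 n\|\A\|^2},\frac{nt}{K^2\|\A\|}\Bigr)\Bigr].
\end{equation*}
This is exactly the form of \eqref{eq:hw}.

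\textbf{Step 2.} Bound the denominator from below using Bernstein's inequality for the sub-exponential variables $X_i^2-1$, whose $\psi_1$-norm is at most $CK^2$. This gives
\begin{equation*}
\P\bigl(\twonorm{\x}^2\le n/2\bigr)\le 2\exp\bigl(-cn\min(1/K^4,1/K^2)\bigr)=2\exp(-cn/K^4),
\end{equation*}
where we used $K\gtrsim 1$, which holds because $\Var(\xi)=1$. On the event $\{\twonorm{\x}^2> n/2\}$ we have $|Q_n(\y,\A)-\tr\A/n|\le 2|\x^\top\mathbf{B}\x|/n$. Hence
\begin{equation*}
\P\bigl(|Q_n(\y,\A)-\tr\A/n|\ge t\bigr)\le \P\bigl(|\x^\top\mathbf{B}\x|\ge nt/2\bigr)+2e^{-cn/K^4}.
\end{equation*}

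\textbf{Step 3 (main obstacle).} The term $e^{-cn/K^4}$ does not depend on $t$, so it must be absorbed into the right-hand side of \eqref{eq:hw}. We use the deterministic bound $|Q_n(\y,\A)-\tr\A/n|\le 2\|\A\|$. For $t>2\|\A\|$ the probability is zero and there is nothing to prove. For $t\le 2\|\A\|$ we have $nt^2/(K^4\|\A\|^2)\le 4n/K^4$, and also $nt/(K^2\|\A\|)\le 2n/K^2\le Cn/K^4\cdot K^2$. Together these give $e^{-cn/K^4}\le \exp[-c'\min(\cdot,\cdot)]$ after adjusting constants; in fact the quadratic branch alone suffices. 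This yields a tail bound around $\tr\A/n$.

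\textbf{Step 4.} Finally, replace $\tr\A/n$ by $\E[Q_n(\y,\A)]$. Since $\y$ is exchangeable in the symmetric case, $\E[Q_n]=\tr\A/n$ exactly. In general, set $d=|\E[Q_n]-\tr\A/n|$. Integrating the tail bound gives $d\le C K^2\|\A\|/\sqrt n$. For $t\ge 2d$ we shift by $d$ and halve $t$. For $t<2d$ we have $nt^2/(K^4\|\A\|^2)\le C$, so the bound holds trivially once $C$ is enlarged so that the right-hand side is at least $1$.

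The delicate part is Step 3, namely matching the $t$-free exponential to the $\min(\cdot)$ form. The deterministic bound $|Q_n(\y,\A)-\tr\A/n|\le 2\|\A\|$ is what resolves it.
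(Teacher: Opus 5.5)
Your proof is correct, but it takes a different route from the paper's. The paper writes $Q_\y-\E[Q_\y]$ as $\frac{Q_\x}{\|\x\|_2^2}\cdot\frac{n-\|\x\|_2^2}{n}+\frac{Q_\x-\E[Q_\x]}{n}$, after taking $\E[Q_\y]=\E[Q_\x]/n$. It bounds $|Q_\x|/\|\x\|_2^2\le\|\A\|$ and applies Theorem~\ref{thm:classical-HW} twice: to $\A$ at level $nt$, and to $\I_n$ at level $nt/\|\A\|$. Because $\|\I_n\|_{\operatorname{F}}^2=n$ and $\|\A\|_{\operatorname{F}}^2\le n\|\A\|^2$, both tails come out directly in the $\min$ form. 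So the paper needs no fixed-level lower bound on $\|\x\|_2^2$, no $K\ge 1$, and no absorption of a $t$-free term.

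You instead center the matrix so the numerator is trace-free, control only the lower tail of $\|\x\|_2^2$ at level $n/2$, and pay for the resulting $e^{-cn/K^4}$ with the deterministic bound $|Q_\y-\tr\A/n|\le 2\|\A\|$. This works: $\min(a,b)\le a\le 4n/K^4$ is all you need, and your inequality for the linear branch can simply be deleted.

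The more substantive difference is the mean. The paper computes $\E[Q_\y]=\tr\A/n+\theta_n\bigl(\sum_{i,j}A_{ij}-\tr\A\bigr)$, invokes $\theta_n=o(n^{-2})$ from Gin\'e--G\"otze--Mason, and then just declares the $o(\|\A\|/n)$ shift negligible. Your Step~4 bounds the shift by $CK^2\|\A\|/\sqrt n$ by integrating your own tail bound, and handles $t<2d$ by enlarging $C$. That bound is quantitatively weaker, but it is exactly what the statement requires. It is self-contained, needs no self-normalized-sum input, and makes the paper's ``negligible'' step rigorous. As a by-product you also get concentration around the explicit centering $\tr\A/n$.

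One small correction: in the symmetric case, $\E[Q_\y]=\tr\A/n$ comes from the sign symmetry $\xi\eid-\xi$, which makes $\E[X_iX_j/\|\x\|_2^2]=0$ for $i\ne j$. Exchangeability only makes these cross moments equal, not zero. Your general argument covers this case anyway.
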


\begin{proof}
	For simplicity we write $Q_\x=\x^\top \A \x$ and $Q_\y=Q_n(\y,\A)$.
	In the following proof the constants $c,C$ might change form line to line. Since we can multiply $\x$ with any positive constant without changing the value of $Q_\y$, we may assume $\E[\xi^2]=1$ without loss of generality.
	Note that $\E[\|\x\|_2^2]=n$ and 
	$$\E\big[Q_\y\big]=\frac{\tr(\A)}{n} {+ \theta_n \bigg( -\tr(\A) +\sum_{i,j=1}^n \A_{ij} \bigg)}=\frac{\E[Q_\x]}{\E[\|\x\|_2^2]}{+o(\|\A\|/n)}$$ 
	where we used $\theta_n:=\E\big[ X_1X_2/ \|x\|_2^2 \big]=o(n^{-2})$ by \cite{gine:goetze:mason:1997}, $|\tr(\A)|\le n \|\A\|$ and $\big|\sum_{i,j=1}^n \A_{ij}\big| \le n \|\A\|$. An inspection of the \rhs\ in \eqref{eq:hw} shows that the $o(\|\A\|/n)$-term is negligible; we will assume it to be zero in the following. Then we have
	\begin{align*}
		Q_\y -\E[Q_\y]
		&=\frac{Q_\x}{\|\x\|_2^2}-\frac{\E[Q_\x]}{n}\\
		&=\frac{Q_\x}{\|\x\|_2^2n}\Big(\|\x\|_2^2-n\Big)+\frac{Q_\x-\E[Q_\x]}{n}.
	\end{align*}
	Using Theorem \ref{thm:classical-HW} and 
	$$
	\frac{|Q_\x|}{\|\x\|_2^2\E[|\|\x\|_2^2|]}\leq \frac{\|\A\|}{n},
	$$
	we obtain
	$$
	\P\l(\frac{|Q_\x|}{\|\x\|_2^2n}\Big|\|\x\|_2^2-n\Big|\geq t\r)
	%\leq \P\l(\Big|\|\x\|_2^2-n\Big|\geq \frac{nt}{\|\A\|}\r)
	\leq C\exp\l(-c\min\l(\frac{nt^2}{K^4\|\A\|^2},\ \frac{nt}{K^2\|\A\|}\r)\r),
	$$
	$$
	\P\l(\Big|Q_\x-\E[Q_\x]\Big|\geq nt\r)\leq C\exp\l(-c\min\l(\frac{n^2t^2}{K^4\|\A\|_\text{F}^2},\ \frac{nt}{K^2\|\A\|}\r)\r),
	$$
	since $\|\I_n\|_{\text{F}}=\sqrt{n}$. Then noting that $\|\A\|_{\text{F}}\leq \sqrt{n} \|\A\|$, we have
	$$
	\P\l(\Big|Q_{\y}-\E[Q_{\y}]\Big|\geq t\r)\leq C\exp\l(-c\min\l(\frac{nt^2}{K^4\|\A\|^2},\ \frac{nt}{K^2\|\A\|}\r)\r).
	$$
\end{proof}

\section{Auxiliary results}

The following lemma is an extension of Proposition 8 in \cite[Section 3.1]{mingo2017free}.
\begin{lemma}\label{Lemma-Recover-atom}
	Let $\beta\in \R^+$ and $\tau$ be a probability measure on $\R$. Then for any $a\in \R$ it holds
	\begin{equation*}
	\lim_{\sphericalangle z\rw a} (z-a)^\beta\int_\R (z-x)^{-\beta}\tau(dx)=\tau(\{a\}).
	\end{equation*}
\end{lemma}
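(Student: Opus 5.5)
The plan is to prove the statement by dominated convergence inside the cone, splitting off the atom at $a$. Fix $\theta>0$ and restrict $z=u+iv$ to the cone $\{v>0,\ |u-a|<\theta v\}$. We write
\[
(z-a)^\beta\int_\R (z-x)^{-\beta}\tau(dx)=\tau(\{a\})+\int_{\R\setminus\{a\}}\Big(\frac{z-a}{z-x}\Big)^{\beta}\tau(dx),
\]
where on $\{x=a\}$ the integrand is exactly $1$. Before using this identity we have to check the branch. Since $z\in\C^+$, both $z-a$ and $z-x$ have arguments in $(0,\pi)$, so $(z-a)^\beta(z-x)^{-\beta}=\exp\big(\beta[\log(z-a)-\log(z-x)]\big)$. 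The difference of the arguments lies in $(-\pi,\pi)$, which keeps the combined expression well defined. We do not need it to coincide with the principal power of the quotient; it suffices to work with the product form directly.

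Next we bound the integrand for $x\neq a$. Its modulus is at most $e^{\beta\pi}\,|z-a|^\beta/|z-x|^\beta$ (the factor $e^{\beta\pi}$ comes from the argument terms). In the cone we have $|z-a|\le \sqrt{1+\theta^2}\,v$, and $|z-x|\ge v$ for every real $x$. Hence the integrand is bounded by $e^{\beta\pi}(1+\theta^2)^{\beta/2}$, a constant, and this constant is integrable against the probability measure $\tau$.

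For pointwise convergence, fix $x\neq a$. As $z\to a$ we get $|z-a|\to0$ while $|z-x|\to|a-x|>0$. The modulus of the integrand is at most $e^{\beta\pi}|z-a|^\beta/|z-x|^\beta\to0$, so the integrand tends to $0$. Dominated convergence then gives
\[
\int_{\R\setminus\{a\}}\Big(\frac{z-a}{z-x}\Big)^{\beta}\tau(dx)\to 0
\]
along the cone. This holds for every $\theta>0$, so the non-tangential limit equals $\tau(\{a\})$.

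The main obstacle is the uniform bound near $x=a$. Points $x$ close to $a$ have $|z-x|$ of order $v$, and this is exactly where the cone restriction is essential: it forces $|z-a|/v$ to stay bounded. Tangential approach would break the bound. The branch bookkeeping is routine once the argument range is controlled as above.
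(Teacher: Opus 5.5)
Your proposal is correct and matches the paper's proof essentially step for step: you split off the atom at $a$ (the paper writes $\sigma=\tau-\tau(\{a\})\delta_a$), bound $|z-a|/|z-x|$ by $\sqrt{1+\theta^2}$ inside the cone, and apply dominated convergence. One minor point: because $\beta$ is real, $|(z-a)^\beta(z-x)^{-\beta}|=|z-a|^\beta|z-x|^{-\beta}$ holds exactly, so the factor $e^{\beta\pi}$ in your bound is unnecessary, though harmless.
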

\begin{proof}
	Given $\theta>0$ and $z=u+iv$ with $|u-a|<\theta v$, then we have for any $x\in \R$,
	$$
	\left | \frac{z-a}{z-x}\r |^2=\frac{(u-a)^2+v^2}{(u-x)^2+v^2}=\frac{1+\l(\frac{u-a}{v}\r)^2}{1+\l(\frac{u-x}{v}\r)^2}\leq 1+\l(\frac{u-a}{v}\r)^2<1+\theta^2.
	$$
	Letting $\sigma=\tau-\tau(\{a\})\delta_a$ we get
	$$
	(z-a)^\beta\int_\R (z-x)^{-\beta}\tau(dx)=\int_\R\l(\frac{z-a}{z-x}\r)^\beta\sigma(dx)+\tau(\{a\}).
	$$
	The dominated convergence theorem implies, as $\sphericalangle z \rightarrow a$,
	$$
	\l|(z-a)^\beta\int_\R (z-x)^{-\beta}\tau(dx)-\tau(\{a\})\r|\leq \int_\R\l|\frac{z-a}{z-x}\r|^\beta\sigma(dx)\rw 0\,,
	$$
	which finishes the proof of the lemma.
\end{proof}

\begin{lemma}\label{Lemma-a.e.finite-alphaPotential}
	Let $\tau$ be a probability measure on $\R$ and $\beta\in (0,1)$. For $x\in\R$, define
	$$
	I_{\tau,\beta}(x)=\int_\R \frac{\tau(dy)}{|x-y|^\beta},
	$$
	then $I_{\tau,\beta}(x)$ is finite $\lambda$-a.e. on $\R$.
\end{lemma}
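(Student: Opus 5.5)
The plan is to show that the integral of $I_{\tau,\beta}$ over any bounded interval is finite. A function whose integral over every bounded interval is finite is finite $\lambda$-a.e. on that interval. Since $\R$ is a countable union of such intervals, this gives finiteness $\lambda$-a.e. on all of $\R$. The tool is Tonelli's theorem, which applies because the integrand $|x-y|^{-\beta}$ is nonnegative and jointly measurable in $(x,y)$, with values in $[0,\infty]$.

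Fix $R>0$. By Tonelli,
\begin{equation*}
\int_{-R}^{R} I_{\tau,\beta}(x)\,dx=\int_\R\left(\int_{-R}^{R}\frac{dx}{|x-y|^\beta}\right)\tau(dy).
\end{equation*}
The inner integral is bounded uniformly in $y$. Split the range according to whether $|x-y|\le 1$ or $|x-y|>1$. On the first region, the integral is at most
\begin{equation*}
\int_{-1}^{1}|u|^{-\beta}\,du=\frac{2}{1-\beta},
\end{equation*}
which is finite because $\beta<1$. On the second region, the integrand is at most $1$, so the integral is at most $2R$. Hence the inner integral is at most $\frac{2}{1-\beta}+2R$ for every $y\in\R$.

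Since $\tau$ is a probability measure, it follows that
\begin{equation*}
\int_{-R}^{R} I_{\tau,\beta}(x)\,dx\le \frac{2}{1-\beta}+2R<\infty.
\end{equation*}
Therefore $\lambda\big(\{x\in[-R,R]:I_{\tau,\beta}(x)=\infty\}\big)=0$. Taking the union over $R\in\N$ yields the claim.

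No step is a serious obstacle. The only point needing care is that $\beta<1$ is used exactly once, to make $|u|^{-\beta}$ integrable near $0$. Tonelli applies without any integrability hypothesis because everything is nonnegative.
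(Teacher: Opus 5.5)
Your proof is correct and is essentially the paper's argument. Both use Tonelli to swap the $x$- and $\tau$-integrals, and both use $\beta<1$ only for the local integrability of $|u|^{-\beta}$. The one cosmetic difference is that the paper bounds the far part $I_2\le\delta^{-\beta}$ pointwise, so the near part $I_1$ is integrable over all of $\R$ and no exhaustion by $[-R,R]$ is needed.
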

\begin{proof}
	For fixed $\delta\in \R^+$ we have 
	$$
	I_{\tau,\beta}(x)=\int_{|x-y|<\delta}\frac{\tau(dy)}{|x-y|^\beta}+\int_{|x-y|\geq\delta}\frac{\tau(dy)}{|x-y|^\beta}:=I_1(x)+I_2(x).
	$$
	Then $I_2(x)\leq \delta^{-\beta}$ and an application of Fubini-Tonelli theorem yields
	\begin{align*}
		\int_\R I_1(x)dx&=\int_\R\l(\int_\R \frac{\1(|x-y|<\delta)}{|x-y|^\beta}\tau(dy)\r)dx\\
		&=\int_\R\l(\int_\R \frac{\1(|x-y|<\delta)}{|x-y|^\beta}dx\r)\tau(dy)\\
		&=\int_\R\l(\int_{-\delta}^\delta \frac{1}{|t|^\beta}dt\r)\tau(dy)\\
		&=\frac{2\delta^{1-\beta}}{1-\beta}<\infty.
	\end{align*}
	Therefore, $I_1(x)$ (and hence $I_{\tau,\beta}(x)$) is finite $\lambda$-a.e. on $\R$. 
\end{proof}

The following lemma is a direct corollary of Theorem 3.2 in \cite{BillingsleyBookConvergence}.
\begin{lemma}\label{lemma-Billingsley-Thm2.3}
	Let $Q_{n,m}$, $T_n$ ($n,m\in \N$) be random variables. Suppose that:
	\begin{enumerate}
		\item For fixed $m$, as $n\rw\infty$, $Q_{n,m}\cid\bar{Q}_m$,\item As $m\rw\infty$, $\bar{Q}_{m}\cid Q$, 
		\item $\lim_{m\rw\infty}\limsup_{n\rw\infty}\E[|T_n-Q_{n,m}|]=0.$
	\end{enumerate}
	Then $T_n\cid Q$.
\end{lemma}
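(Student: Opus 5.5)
This is a standard approximation argument in the spirit of Theorem~3.2 of Billingsley. Write $\|\cdot\|_{\mathrm{BL}}$ for the bounded-Lipschitz norm, $\|g\|_{\mathrm{BL}}=\sup|g|+\operatorname{Lip}(g)$. Since convergence in distribution is metrized by the bounded-Lipschitz distance, it suffices to show, for every bounded Lipschitz $g$ with $\|g\|_{\mathrm{BL}}\le 1$, that $\E[g(T_n)]\to \E[g(Q)]$.

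The plan is to split the difference into three terms by the triangle inequality:
\[
|\E g(T_n)-\E g(Q)|\le |\E g(T_n)-\E g(Q_{n,m})|+|\E g(Q_{n,m})-\E g(\bar Q_m)|+|\E g(\bar Q_m)-\E g(Q)|.
\]
\begin{itemize}
\item The first term is at most $\E|T_n-Q_{n,m}|$, because $g$ is $1$-Lipschitz.
\item The second term tends to $0$ as $n\to\infty$ for each fixed $m$, by hypothesis~1.
\item The third term does not depend on $n$ and tends to $0$ as $m\to\infty$, by hypothesis~2.
\end{itemize}

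Taking $\limsup_{n\to\infty}$ with $m$ fixed therefore gives
\[
\limsup_{n\to\infty}|\E g(T_n)-\E g(Q)|\le \limsup_{n\to\infty}\E|T_n-Q_{n,m}|+|\E g(\bar Q_m)-\E g(Q)|.
\]
The left-hand side does not depend on $m$. Letting $m\to\infty$ and using hypotheses~2 and~3 shows that it is $0$.

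The only step that needs care is passing from bounded Lipschitz test functions to all of $\mathcal C_b(\R)$. This is the Portmanteau theorem: convergence of $\E g(T_n)$ for every bounded Lipschitz $g$ implies weak convergence. Alternatively, one can note that $L^1$-closeness gives $\P(|T_n-Q_{n,m}|>\varepsilon)\le \varepsilon^{-1}\E|T_n-Q_{n,m}|$ by Markov's inequality, which is exactly the condition in Billingsley's Theorem~3.2, and apply that theorem directly. Neither route presents a real obstacle.
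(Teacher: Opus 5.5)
Your proof is correct, but its main argument differs from the paper's. The paper gives no separate proof: it declares the lemma a direct corollary of Theorem~3.2 in Billingsley. That is exactly your ``alternative'' route, where Markov's inequality turns hypothesis~3 into $\lim_{m}\limsup_{n}\P(|T_n-Q_{n,m}|>\varepsilon)=0$, which is Billingsley's condition.

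Your bounded-Lipschitz argument is a self-contained replacement for that citation. Because the hypothesis here is an $L^1$ bound, the coupling term is controlled in one line by $|\E g(T_n)-\E g(Q_{n,m})|\le \E|T_n-Q_{n,m}|$. The only external input is the Portmanteau fact that bounded Lipschitz test functions determine weak convergence.

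Billingsley's theorem only needs closeness in probability, and its proof works with closed sets and their $\varepsilon$-neighbourhoods. So citing it is more general, but less transparent about why the lemma holds. Your argument also adapts to the in-probability setting if you replace $\E|T_n-Q_{n,m}|$ by $\varepsilon+2\P(|T_n-Q_{n,m}|>\varepsilon)$ for $g$ with $\|g\|_{\mathrm{BL}}\le 1$.

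For the paper's only use of the lemma, in the proof of Theorem~\ref{thm:qf-unbounded}, an $L^1$ bound is exactly what is verified. Your direct argument therefore suffices there.
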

\end{appendix}

%%%%%%%%%%%%%%%%%%%%%%%%%%%%%%%%%%%%%%%%%%%%%%
%% Support information, if any,             %%
%% should be provided in the                %%
%% Acknowledgements section.                %%
%%%%%%%%%%%%%%%%%%%%%%%%%%%%%%%%%%%%%%%%%%%%%%
\begin{acks}[Acknowledgments]
The authors would like to thank Profs. Dang-Zheng Liu and Dong Wang for insightful discussions on the materials in Section~\ref{sec:app-RMT}, and  Prof. Cosme Louart for his contribution to Theorem \ref{thm:HW-Qy} in the Appendix. \jh{We are grateful for the constructive feedback by the editor and associate editor which led to an improved version of this paper.} 
\end{acks}
%%%%%%%%%%%%%%%%%%%%%%%%%%%%%%%%%%%%%%%%%%%%%%
%% Funding information, if any,             %%
%% should be provided in the                %%
%% funding section.                         %%
%%%%%%%%%%%%%%%%%%%%%%%%%%%%%%%%%%%%%%%%%%%%%%
\begin{funding}
J.\ Heiny’s research was supported by the Swedish Research Council grant VR-2023-03577 ``High-dimensional extremes and random matrix structures'' and by the Verg-Foundation.
\end{funding}

%%%%%%%%%%%%%%%%%%%%%%%%%%%%%%%%%%%%%%%%%%%%%%
%% Supplementary Material, including data   %%
%% sets and code, should be provided in     %%
%% {supplement} environment with title      %%
%% and short description. It cannot be      %%
%% available exclusively as external link.  %%
%% All Supplementary Material must be       %%
%% available to the reader on Project       %%
%% Euclid with the published article.       %%
%%%%%%%%%%%%%%%%%%%%%%%%%%%%%%%%%%%%%%%%%%%%%%
%\begin{supplement}
%\stitle{???}
%\sdescription{???.}
%\end{supplement}

%%%%%%%%%%%%%%%%%%%%%%%%%%%%%%%%%%%%%%%%%%%%%%%%%%%%%%%%%%%%%
%%                  The Bibliography                       %%
%%                                                         %%
%%  imsart-???.bst  will be used to                        %%
%%  create a .BBL file for submission.                     %%
%%                                                         %%
%%  Note that the displayed Bibliography will not          %%
%%  necessarily be rendered by Latex exactly as specified  %%
%%  in the online Instructions for Authors.                %%
%%                                                         %%
%%  MR numbers will be added by VTeX.                      %%
%%                                                         %%
%%  Use \cite{...} to cite references in text.             %%
%%                                                         %%
%%%%%%%%%%%%%%%%%%%%%%%%%%%%%%%%%%%%%%%%%%%%%%%%%%%%%%%%%%%%%

%% if your bibliography is in bibtex format, uncomment commands:
\bibliographystyle{imsart-number} % Style BST file (imsart-number.bst or imsart-nameyear.bst)
\bibliography{reference2026}       % Bibliography file (usually '*.bib')

%% or include bibliography directly:
% \begin{thebibliography}{}
% \bibitem{b1}
% \end{thebibliography}

\end{document}